\documentclass[preprint,12pt]{article}
\usepackage{amssymb}
\usepackage{mathrsfs}
\usepackage{amsfonts}
\usepackage{graphicx}
\usepackage{colortbl,dcolumn}
\usepackage{amsmath}
\usepackage{caption}
\usepackage{psfrag}
\usepackage{booktabs}
\usepackage{cite}
\usepackage{appendix}
\usepackage{amsthm}
\usepackage{subfigure}
\usepackage{comment}
\numberwithin{equation}{section}
\usepackage[top=0.9in, bottom=0.9in, left=0.8in, right=0.8in]{geometry}

\usepackage{hyperref}
\hypersetup{hypertex=true,colorlinks=true,
linkcolor=blue,
anchorcolor=red,
citecolor=red}

\usepackage{soul}

\newcommand{\R}{\mathbb{R}}
\newcommand{\N}{\mathbb{N}}
\newcommand{\E}{\mathbb{E}}
\renewcommand{\P}{\mathbb{P}}
\newcommand{\tr}{\operatorname{trace}}

\newcommand{\dd}{\text{d}}

\newtheorem{theorem}{Theorem}[section]
\newtheorem{definition}[theorem]{Definition}
\newtheorem{lemma}[theorem]{Lemma}
\newtheorem{proposition}[theorem]{Proposition}

\newtheorem{remark}[theorem]{Remark}
\newtheorem{example}[theorem]{Example}
\newtheorem{assumption}[theorem]{Assumption}

\begin{document}
\title{
Projected Langevin Monte Carlo algorithms in non-convex and super-linear setting\footnotemark[1]
}

\author{
Chenxu Pang$^{\text{1}}$, \ Xiaojie Wang$^{1,}$\footnotemark[2] and Yue Wu$^{2}$
\\
\footnotesize $^{1}$ School of Mathematics and Statistics, HNP-LAMA, Central South University, Changsha, Hunan, P. R. China\\
\footnotesize $^{2}$ Department of Mathematics and Statistics, University of Strathclyde, Glasgow G1 1XH, UK \\
}

       \maketitle

       \footnotetext{\footnotemark[1] This work was supported by Natural Science Foundation of China 
       {\color{black} (12471394, 12071488, 12371417)}
                and {\color{black} China Scholarship Council (202306370290)}.    
        The authors 
want to thank Xiang Li for his helpful suggestions on the analysis in Section 6 and Bin Yang and Xiaoyan Zhang for their useful comments based on carefully reading 
the manuscript. This work was conducted as part of the first author’s joint PhD program at the National University of Singapore (NUS), supported from the China 
Scholarship Council (CSC), and the author sincerely appreciates the support provided by both institutions.
                }

            \footnotetext{
                \footnotemark[2]Corresponding author.
                }
    
        \footnotetext{E-mail address: c.x.pang@csu.edu.cn,\ x.j.wang7@csu.edu.cn,\ 
        yue.wu@strath.ac.uk
        }
        %
        %
       \begin{abstract}
       It is of significant interest in many applications to sample from a high-dimensional target distribution 
       $\pi$ with the density $\pi(\dd x) \propto e^{-U(x)} (\dd x) $, based on the temporal discretization of the Langevin stochastic differential equations (SDEs).
       In this paper, we propose an explicit projected Langevin Monte Carlo (PLMC) algorithm with  non-convex potential $U$ and super-linear gradient of $U$
       and investigate the non-asymptotic analysis of its sampling error in total variation distance.
       Equipped with time-independent regularity estimates for the associated Kolmogorov equation,
       we derive the non-asymptotic bounds on the total variation distance between the target distribution of the Langevin SDEs and the law induced by the PLMC scheme with order 
       $\mathcal{O}(d^{\max\{3\gamma/2 , 2\gamma-1 \}} h |\ln h|)$,
       where $d$ is the dimension of the target distribution
       and $\gamma \geq 1$ characterizes the growth of the gradient of $U$.
       In addition,
       if the gradient of $U$ is globally Lipschitz continuous,
       an improved convergence order of $\mathcal{O}(d^{3/2} h)$ for the classical Langevin Monte Carlo (LMC) scheme is derived with a refinement of the proof based on Malliavin calculus techniques.
       To achieve a given precision $\epsilon$,
       the smallest number of iterations of
       the PLMC algorithm is proved to be of order ${\mathcal{O}}\big(\tfrac{d^{\max\{3\gamma/2 , 2\gamma-1 \}}}{\epsilon} \cdot \ln (\tfrac{d}{\epsilon}) \cdot \ln (\tfrac{1}{\epsilon}) \big)$.
       In particular,
       the classical Langevin Monte Carlo (LMC) scheme with the non-convex potential $U$ and the globally Lipschitz gradient of $U$ can be guaranteed by order ${\mathcal{O}}\big(\tfrac{d^{3/2}}{\epsilon}  \cdot \ln (\tfrac{1}{\epsilon}) \big)$.
       Numerical experiments are provided to confirm the theoretical findings.

\textbf{AMS subject classification: } {\rm\small 60H35, 65C05, 65C30.}\\

\textbf{Key Words: }{\rm\small} Langevin Monte Carlo sampling, total variation distance, non-convex potential, super-linear setting, projected scheme, Kolmogorov equations.
\end{abstract}

%
\section{Introduction}	\label{section: introduction}
Sampling from a high-dimensional ($d \gg 1$) target distribution $\pi$  plays a crucial role in various fields such as Bayesian inference, statistical physics, machine learning and computational biology and has been a subject of recent intensive research efforts.
For example, evaluating the expectation of some functional $\phi$ with respect to  $\pi$:
\begin{equation} \label{introduction:invariant-pi}
    \pi(\phi):=\int_{\mathbb{R}^d} \phi(x) \pi(\dd x),
\end{equation}
is of great interests in applications in the area of 
Bayesian statistics.
A typical approach of sampling is to set the target measure $\pi$ as the invariant measure of the stochastic differential equations (SDEs) and undertake an appropriate numerical scheme that discretizes such SDEs in time.
More precisely,
we consider a class of sampling methods based on the following overdamped Langevin stochastic differential equations (SDEs) of It\^o type:
\begin{equation} \label{eq:langevin-SODE}
    \dd X_t  =  
    - \nabla U ( X_t ) \, \dd t 
    + \sqrt{2} \,\dd W_t,
    \quad
   X_0 = x_0,
   \quad 
       t >0,
\end{equation}
{\color{black}
where 
$W_{\cdot} = \left(W_{1,\cdot}, \dots, W_{d,\cdot} 
\right)^{T}:[0, \infty) \times \Omega \rightarrow \mathbb{R}^{d}$ denotes the $\mathbb{R}^{d}$-valued standard Brownian motion} with respect to $\left\{\mathcal{F}_{t}\right\}_{t \in [0, T]}$
and
{\color{black}
the initial datum
}
$x_{0}: \Omega \rightarrow \mathbb{R}^{d}$ is assumed to be $\mathcal{F}_{0}$-measurable.
Under certain conditions, 
the dynamics of \eqref{eq:langevin-SODE} 
{\color{black}
admits a unique invariant distribution
}$\pi$ 
{\color{black}
with the density proportional to $x \mapsto e^{-U(x)}$.
}
To asymptotically sample from $\pi$,
{\color{black}
 which is the the invariant distribution of \eqref{eq:langevin-SODE},
}
one notable example is the unadjusted Langevin Monte Carlo (LMC for short) algorithm, which corresponds to the well-known Euler-Maruyama scheme of the  Langevin SDE \eqref{eq:langevin-SODE}, 
given by 
\begin{equation} \label{equation:numerical-scheme-euler-maruyama-paper3}
\widetilde{Y}_{n+1} = 
\widetilde{Y}_{n} 
-  \nabla U
\big(\widetilde{Y}_{n}\big) h 
+ \sqrt{2h}
\xi_{n+1},
\quad
\widetilde{Y}_0 = x_0,
\end{equation}
where $h \in (0,1)$ represents the uniform timestep and 
$\xi_{k} = ( \xi_{1,k}, \dots, \xi_{d,k})^{T}$, $k\in \mathbb{N}$, 
are i.i.d standard $d$-dimensional Gaussian vectors.

Non-asymptotic analysis focuses on the explicit dependency of the error with respect to the algorithm parameters, e.g., step size, rather than explaining the asymptotic behavior as the algorithm iterates to infinity or the step size tends to zero.
Non-asymptotic convergence analysis  for the LMC algorithm \eqref{equation:numerical-scheme-euler-maruyama-paper3}
is typically investigated with 

\noindent
{\color{black}
(i) \textit{global Lipschitz continuity condition}:
}
there exists a positive constant $\bar{L}>0$ such that
\begin{equation}\label{equation:lipschitz-u}
\|\nabla U(x) - \nabla U(y) \| 
\leq \bar{L} \|x-y \|, 
\quad \forall x, y \in \mathbb{R}^{d},
\, \text{ and }
\end{equation}
{\color{black}
(ii) \textit{strong convexity condition}: 
}
there exists a positive constant $\widetilde{L}>0$ such that
\begin{equation}\label{equation:strongly-convex-u}
\left\langle 
x-y,\nabla U(x)-\nabla U(y) 
\right\rangle 
\geq
\widetilde{L}\|x-y \|^{2}, 
\quad \forall x, y \in \mathbb{R}^{d}.
\end{equation}
In recent years, working with condition (i) \textcolor{blue}{and} (ii), non-asymptotic error analysis between the target distribution and the law of the LMC algorithm under various metrics, such as the Wasserstein distance and the total variation distance has been well established (see \cite{durmus2017nonasymptotic,dalalyan2017theoretical,durmus2019high}).

Except for a few rare cases, it is extremely hard for the 
{\color{black}
Langevin
}
SDE \eqref{eq:langevin-SODE} to satisfy either condition (i) or (ii). {A commonly used counterexample in quantum mechanics is the double-well potential presented in Example \ref{eqn:doublewell}.}
What if the drift $\nabla U$ grows superlinearly?
Conventional LMC algorithm loses its powers when attempting to sample from the target distribution inherited by \eqref{eq:langevin-SODE}. 
For example, as claimed by \cite{hutzenthaler2011strong,mattingly2002ergodicity}, for a large class of SDEs with super-linear growth coefficients, the Euler-Maruyama scheme \eqref{equation:numerical-scheme-euler-maruyama-paper3} leads to divergent numerical approximations in both finite and infinite time intervals.
Therefore, 
a convergent numerical algorithm for the Langevin SDEs \eqref{eq:langevin-SODE} with non-globally Lipschitz drift is necessary.
Recent years have witnessed a considerable growth in construction and analysis of convergent schemes for SDEs in the non-globally Lipschitz setting (see \cite{wang2013tamed,hutzenthaler2012strong,li2019explicit,sabanis2016euler,wang2020mean,beyn2016stochastic,beyn2017stochastic,szpruch2018}).
To deal with the super-linear drift of the Langevin SDEs \eqref{eq:langevin-SODE}, authors in \cite{brosse2019tamed} used a tamed Langevin algorithm to obtain the non-asymptotic error bounds on 2-Wasserstein distance and total variation distance. 

Beside the challenge of the super-linear drift, relaxing the strongly-convex potential, i.e. condition (ii), may lead to possible collapse of the classical method in the non-asymptotic error analysis \cite{majka2020nonasymptotic},
where
the contractivity condition can be replaced by a \textit{contractivity at infinity} condition (see Assumption \ref{assumption:contractivity-at-infinity-condition-paper3}).
Recently,
working on such a relaxed condition and demonstrating the non-asymptotic error analysis of the corresponding Langevin SDEs \eqref{eq:langevin-SODE} have indeed gained appreciable attention (see \cite{pages2023unadjusted,majka2020nonasymptotic,neufeld2022non} for example).
Nevertheless,
it is worth mentioning that a majority of existing works investigate this topic via the Wasserstein distance error bounds.
In \cite{majka2020nonasymptotic}, authors obtained the non-asymptotic upper bound on  1-Wasserstein distance and 2-Wasserstein distance of order $1/2$ and $1/4$, respectively, under globally Lipschitz condition.
If the Lipschitz condition is weakened by a polynomial growth condition, authors in \cite{neufeld2022non} improved the   1-Wasserstein error bound and the  2-Wasserstein error bound with respective convergence order of $1$ and $1/2$.


In this paper,
we investigate the non-asymptotic convergence of the target distribution admitted by \eqref{eq:langevin-SODE}
in total variation distance \eqref{equation:total-variation-between-probability distributions}-\eqref{equation:total-variation-between-rv-and-distribution-Cb}
in non-convex and non-globally Lipschitz setting (see Assumptions \ref{assumption:globally-polynomial-growth-condition-paper3}, \ref{assumption:contractivity-at-infinity-condition-paper3} and \ref{assumption:coercivity-condition-of-the-drift-paper3} below). 
The setting of total variation distance  
allows us to consider bounded and measurable function $\phi \in B_{b}(\mathbb{R}^{d})$ in
\eqref{introduction:invariant-pi}.
{\color{black}
Classical examples include 
indicator and step functions
}(see \eqref{equation:bounded-test-function-paper3}).
As far as we know, the investigation of approximation errors in weak approximations of \eqref{introduction:invariant-pi} without fulfilling conditions (i) and (ii) is still in its initial phases.
{\color{black}
The present article turns to handle the Langevin SDEs \eqref{eq:langevin-SODE} with super-linear growing nonlinearities. To this end, we propose a projected Langevin Monte Carlo (PLMC) algorithm 
with a uniform timestep $h \in (0,1)$ as follows:
\begin{equation} \label{equation:numerical-scheme-PLMC-algorithm-paper3-in-introduction}
    Y_{n+1} = 
    \mathscr{P}(Y_{n}) -  \nabla U \left(\mathscr{P}(Y_{n})\right) h 
    + \sqrt{2h}\xi_{n+1}, \quad
   Y_0 = x_0,
\end{equation}
where
$\xi_{k} = ( \xi_{1,k}, \dots, \xi_{d,k})^{T}$, $k\in \mathbb{N}$,
are i.i.d standard $d$-dimensional Gaussian vectors and
$\mathscr{P}: \mathbb{R}^{d} \rightarrow \mathbb{R}^{d}$ denotes the projection operator given by
\begin{equation}
\begin{aligned}
\mathscr{P}(x):= \begin{split}
\left\{
    \begin{array}{ll}
    \min\big\{
    1, 
    \vartheta
    ( 
    \tfrac{d}{h}
    )^{1/2\gamma} 
    \| x\|^{-1} 
     \big\}x,  \hspace{1em} \text{for}\ \gamma > 1,
    \\
   x,\ \hspace{10.7em} \text{for}\ \gamma = 1,
 \end{array}\right.
 \end{split}
\quad \forall x\in \mathbb{R}^{d}.
\end{aligned}
\end{equation}
Here $\gamma$ determined in Assumption \ref{assumption:globally-polynomial-growth-condition-paper3} below characterizes the growth of the gradient of $U$ and $\vartheta \geq 1$, independent of the stepsize $h$ and the dimension $d$, 
can be customized to prevent
the numerical approximations \eqref{equation:numerical-scheme-PLMC-algorithm-paper3-in-introduction} from being projected too often in the iteration for the case $\gamma > 1$.
In particular, 
our PLMC algorithm \eqref{equation:numerical-scheme-PLMC-algorithm-paper3-in-introduction} reduces to the classical LMC algorithm \eqref{equation:numerical-scheme-euler-maruyama-paper3} for the Langevin SDEs \eqref{eq:langevin-SODE}
with a Lipschitz continuous drift, i.e. $\gamma=1$.
}
%

{
\color{black}
The key idea of this paper is to turn non-asymptotic bounds on the total variation distance between the law of the numerical approximation and the target distribution to the weak error analysis in the setting of non-smooth test functions $\phi \in B_{b}(\mathbb{R}^{d})$.
We mention that, the seminal work \cite{bally1996law} obtained order one weak convergence in finite time of the Euler scheme for SDEs with globally Lipschitz and smooth coefficients, where the test functions are only assumed to be measurable and bounded.
Different from \cite{bally1996law}, the present weak error analysis is carried out in the infinite time horizon and the drift coefficient $f$ of SDEs possibly grows superlinearly.
These two aspects force us to encounter two major obstacles.
}
The first one is to get a couple of a priori estimates that are independent of time and stepsize, including the uniform moment bounds of the PLMC scheme \eqref{equation:numerical-scheme-PLMC-algorithm-paper3-in-introduction} and the time-independent regularity estimates of the Kolmogorov equation,
which is challenging due to the loss of condition (ii)
{\color{black}
and to the test functions not being smooth.}
Another one is the  discontinuity of the proposed PLMC algorithm \eqref{equation:numerical-scheme-PLMC-algorithm-paper3-in-introduction}, which results in further difficulties in handling the weak error via the Kolmogorov equation.
Novel techniques are used to circumvent these two difficulties.
Discrete arguments are adopted to obtain the uniform moment bounds of the PLMC algorithm \eqref{equation:numerical-scheme-PLMC-algorithm-paper3-in-introduction} (see the proof of Lemma \ref{lemma:Uniform-moment-bounds-of-the-PLMC-paper3}).
{\color{black}
Moreover,
the Bismut-Elworthy-Li formula (Lemma \ref{lemma:Bismut-Elworthy-Li-formula-paper3}), combined with the Markov property of the transition semigroup $u(\cdot,\cdot)$ as shown in \eqref{equation:def-of-u-paper3}, is used to derive the time-independent regularity estimates of the Kolmogorov equation (see Section \ref{section:Kolmogorov-equation-and-regularization-estimates}).}
To handle the discontinuity of the PLMC algorithm \eqref{equation:squared-PLMC-algorithm-paper3},
we introduce the continuous-time version \eqref{introduction:continuous-time-version-of-PLMC algorithm-paper3} at each step to fully exploit the Kolmogorov equation (see \eqref{equation:kolmogorov-equation-paper3}).

For the weak error analysis,
given any terminal time $T \in (0, \infty)$ such that $Nh=T$, $N \in \mathbb{N}$,
we separate the error 
$\big| \mathbb{E}\left[\phi(Y^{x_{0}}_{N}) \right] - \mathbb{E}\left[\phi(X^{x_{0}}_{T}) \right]\big|$, i.e., $\big| \mathbb{E}\left[u(T, x_{0}) \right] - \mathbb{E}\left[u(0, Y^{x_{0}}_{N}) \right] \big|$ based on the associated Kolmogorov equation (see \eqref{equation:kolmogorov-equation-paper3}) into two parts as $J_{1}$ and $J_{2}$ in  \eqref{eq:decomposition-of-the-weak-error-paper3}.
The  first part $J_{1}$
is of {\color{black}
order 
$\mathcal{O}(d^{2\gamma-1}h)$,
}
which can be considered as a  direct consequence of the time-independent regularity of $u(t,\cdot)$ (see Theorem \ref{theorem:regular-estimate-of-u-and-its-derivatives-paper3}) and the convergence of the projected operator $\mathscr{P}(\cdot)$ (see Lemma \ref{lemma:error-estimate-between-x-and-projected-x-paper3}).
By virtue of the continuous version of the PLMC algorithm \eqref{introduction:continuous-time-version-of-PLMC algorithm-paper3}, the Kolmogorov equation and the It\^o formula, the second error term $J_{2}$ 
can be proved to be 
{\color{black}
$\mathcal{O}(d^{\max\{3\gamma/2 , 2\gamma-1 \}}h |\ln h|)$ 
}
(see Theorem \ref{theorem: Time-independent weak error analysis-paper3} and its proof for more details).
{
\color{black}
For the the Lipschitz case, i.e., $\gamma=1$,
we can improve the estimate of $J_{2}$ with the help of a refinement of the proof based on Malliavin calculus techniques and thus obtain the improved convergence rate $\mathcal{O}(d^{3/2}h)$ (see Theorem \ref{theorem:optimal-convergence-rate-lipschitz-setting-paper3}).
Theorem \ref{theorem: Time-independent weak error analysis-paper3} and
Theorem \ref{theorem:optimal-convergence-rate-lipschitz-setting-paper3}
pave the way to analyzing the smallest number of iterations of the PLMC scheme \eqref{equation:numerical-scheme-PLMC-algorithm-paper3-in-introduction} required to approach the target distribution inherited by \eqref{eq:langevin-SODE} on the total variation distance with given precision, also known as the mixing time, 
which has been remaining an active field \cite{li2022sqrt,cheng2018sharp} in recent years.
More precisely,
it is shown that, to achieve a given precision $\epsilon$,
the smallest number of iterations of the 
PLMC scheme \eqref{equation:numerical-scheme-PLMC-algorithm-paper3-in-introduction} 
is of order ${\mathcal{O}}\big(\tfrac{d^{\max\{3\gamma/2 , 2\gamma-1 \}}}{\epsilon} \cdot \ln (\tfrac{d}{\epsilon}) \cdot \ln (\tfrac{1}{\epsilon}) \big)$ for $\gamma>1$,
and of order ${\mathcal{O}}\big(\tfrac{d^{3/2}}{\epsilon} \cdot \ln (\tfrac{1}{\epsilon}) \big)$ for $\gamma=1$.
(see Proposition \ref{theorem-mixing-time}).
}

We summarize our main contributions as follows:
\begin{itemize}
    \item A projected Langevin Monte Carlo algorithm, capable of dealing with super-linear systems and covering the classical Langevin Monte Carlo algorithm, is presented.
    \item Non-asymptotic bounds on the total variation distance between the law of the PLMC algorithm and the target distribution, inherited by \eqref{eq:langevin-SODE}, are established for non-convex potential.
    \item  {\color{black}
    The smallest number of iterations of the projected Langevin Monte Carlo scheme required to approach the target distribution, admitted by \eqref{eq:langevin-SODE}, in the total variation distance with given precision is shown.
    }
\end{itemize}

The rest of this article is organized as follows.
The next section formulates the primary setting and shows the main result of this paper.
In Section \ref{section:Preliminary-results-paper3}, 
we present some a priori estimates of both SDE and the PLMC algorithm.
Section \ref{section:Kolmogorov-equation-and-regularization-estimates} reveals the Kolmogorov equation and its regularity estimates.
In Section \ref{section:Time-independent-error-analysis}, the time-independent weak error analysis between SDE and the PLMC scheme is given. {\color{black} Section \ref{section:optimal-error-analysis} is devoted to the optimal weak convergence rate for the Lipschitz case $\gamma = 1$.}
Some numerical tests are shown to illustrate our theoretical findings in Section \ref{section:Numerical-experiment}. Finally, the Appendix contains the detailed proof of several auxiliary lemmas.

\section{Settings and main results}
Throughout this paper, we use $\N$ to denote the set of all positive integers
and let $ d,m \in \N$, $ T \in (0, \infty) $ be given. Let $\| \cdot \|$ and $ \langle \cdot, \cdot \rangle $ denote the Euclidean
norm and the inner product of vectors in $\R^d$, respectively. 
Adopting the same notation as the vector norm, we denote $\|A\| : =\sqrt{\tr(A^{T}A)}$ as the trace norm of a matrix $A \in \R^{d \times m}$.
We use $\max\{a,b\}$ and $\min\{a,b\}$ for the maximum and  minimum values of between $a$ and $b$ respectively.
Given a filtered probability space $ \left( \Omega, \mathcal{ F }, \{ \mathcal{ F }_t \}_{ t \in [0,T] }, \P 
\right) $, we use $\E$ to mean the expectation and $L^{r} (\Omega, \R^d ), r \geq 1 $, to denote 
the family of $\R^d$-valued random variables $\xi$ satisfying   $\E[ \|\xi \|^{r}]<\infty$.
Moreover, 
we introduce a new notation $X^{x}_{t}$ for $t\in [0,\infty)$ denoting the solution of SDE \eqref{eq:langevin-SODE} satisfying the initial condition $X^{x}_{0} = X_{0}=x$. 
In addition, denote by $C_{b}(\mathbb{R}^{d})$ 
(resp. $B_{b}(\mathbb{R}^{d})$ )
the Banach space of all uniformly continuous and bounded mappings
(resp. Borel bounded mappings) 
$\varphi: \mathbb{R}^{d} \rightarrow \mathbb{R}$ endowed with the norm $\|\varphi \|_{0} = \sup_{x\in \mathbb{R}^{d}} |\varphi(x) |$.

For the vector-valued function $\textbf{u}: \mathbb{R}^{d} \rightarrow \mathbb{R}^{{\ell}}$,
$\textbf{u} = (
u_{(1)}, \dots, u_{({\ell})}
)$, its first order partial derivative is considered as the Jacobian matrix as
\begin{equation}
D \mathbf{u}=
\left(\begin{array}{ccc}
\frac{\partial u_{(1)}}{\partial x_1} & \cdots & \frac{\partial u_{(1)}}{\partial x_d} \\
\vdots & \ddots & \vdots \\
\frac{\partial u_{(\ell)}}{\partial x_1} & \cdots & \frac{\partial u_{(\ell)}}{\partial x_d}
\end{array}\right)_{\ell \times d}.
\end{equation}
For any $v_{1} \in \mathbb{R}^{d}$, one knows $D (\mathbf{u} ) v_{1} \in \mathbb{R}^{\ell}$ and
one can define $D^{2} \mathbf{u}(v_{1},v_{2})$ as 
\begin{equation}
D^{2} \mathbf{u}(v_{1},v_{2})
:= D \big( D (\mathbf{u} )v_{1} \big) v_{2}, \quad \forall v_{1}, v_{2} \in \mathbb{R}^{d}.
\end{equation}
In the same manner, one can define
\begin{equation}
D^{3} \mathbf{u}(v_{1},v_{2},v_{3})
:= D\Big( D \big( D (\mathbf{u} )v_{1} \big) v_{2} \Big)v_{3}, \quad \forall v_{1}, v_{2}, v_{3} \in \mathbb{R}^{d},
\end{equation}
and for any integer $k \geq 3$ the $k$-th order 
{\color{black} derivatives
}
of the function  $\textbf{u}$ can be defined recursively.
Given the Banach spaces $\mathcal{X}$ and $\mathcal{Y}$, we denote by $L(\mathcal{X}, \mathcal{Y})$ the Banach space of bounded linear operators from  $\mathcal{X}$ into $\mathcal{Y}$.
Then the partial derivatives of the function  $\textbf{u}$ can also be regarded as the operators
\begin{equation}
D \textbf{u}(\cdot)(\cdot): \mathbb{R}^d \to L(\mathbb{R}^d, \mathbb{R}^\ell),    
\end{equation}
\begin{equation}
D^2 \textbf{u}(\cdot)(\cdot,\cdot): \mathbb{R}^d \to 
L(\mathbb{R}^d, 
{\color{black}
L(\mathbb{R}^d,\mathbb{R}^\ell))
}\cong  L(\mathbb{R}^d\otimes\mathbb{R}^d, \mathbb{R}^\ell)
\end{equation}
and
\begin{equation}
D^3 \textbf{u}(\cdot)(\cdot,\cdot,\cdot): \mathbb{R}^d \to L(\mathbb{R}^d, L(\mathbb{R}^d,
{\color{black}
L(\mathbb{R}^d,\mathbb{R}^\ell)))
}
\cong  L((\mathbb{R}^d)^{\otimes 3}, \mathbb{R}^\ell).
\end{equation}
We remark that the partial derivatives of the scalar valued function can be covered by the special case $\ell =1$.

For any $k\in \mathbb{N}$, let $C^{k}_{b}(\mathbb{R}^{d})$ be the subspace of $C_{b}(\mathbb{R}^{d})$ consisting of all functions with bounded partial derivatives $D^{i}\varphi(x)$, $1\leq i \leq k$.
In what follows, we use the letter $C$ to denote generic constants, independent of both the step size $h\in (0,1)$ and the dimension $d$.
Also, let $\textbf{1}_{H}$ be the indicator function of a set $H$.

Further, 
the total variation distance between two Borel probability distributions $\mu_{1}$ and $\mu_{2}$ is defined by
\begin{equation} \label{equation:total-variation-between-probability distributions} 
\|
\mu_{1}-\mu_{2} 
\|_{\text{TV}} 
:=
\sup_{
\varphi 
\in 
B_{b}(
\mathbb{R}^{d}
), 
\ \varphi \neq 0 
} 
\dfrac{
\left|
\int_{\mathbb{R}^d} 
\varphi(x) \mu_{1}(\dd x)  
- 
\int_{\mathbb{R}^d} 
\varphi(x) \mu_{2}(\dd x) 
\right| 
}{
\|\varphi \|_{0}
}.
\end{equation}
Consequently, if $X$ is a $\mathbb{R}^{d}$-valued random variable, the probability distribution of $X$ is denoted by $\Pi(X)$. Then, the total variation distance between $\Pi(X)$ and any Borel probability distribution $\mu$ is given as,
\begin{equation}
\|
\Pi(X)-\mu 
\|_{\text{TV}} 
:= 
\sup_{
\varphi 
\in 
B_{b}(
\mathbb{R}^{d}
), 
\ 
\|\varphi\|_{0} 
\leq 1 
} 
\left|
\mathbb{E}
\left[
\varphi(X) 
\right]  
- 
\int_{\mathbb{R}^d} 
\varphi(x) 
\mu(\dd x) 
\right|.
\end{equation}
{\color{black}
Recalling that for any $\varphi \in B_{b}(\mathbb{R}^{d})$, there exists a sequence $ \{ \varphi_{k} \}_{k \in \mathbb{N} }$ of bounded and continuous functions which converges boundedly and pointwise to $\varphi$, i.e., which satisfies 
{\color{black}
$ \sup_{k \in \mathbb{N}} \sup_{x \in \mathbb{R}^{d}} | \varphi_{k}(x)| < \infty $
}
and $\lim_{k \rightarrow \infty} \varphi_{k}(x) = \varphi(x)$, for all $x\in \mathbb{R}^{d}$  
(see \cite{brehier2024total} or \cite[Proposition 4.2]{ethier2009markov} for details).
} 
Therefore, the total variation distance between two Borel probability distributions $\mu_{1}$ and $\mu_{2}$ has the representation as follows,
\begin{equation}\label{equation:tv-distance-of-two-measures}
\begin{aligned}
\|
\mu_{1}-\mu_{2} 
\|_{\text{TV}} 
&=
\sup_{
\varphi 
\in 
C_{b}(\mathbb{R}^{d}),
\ \varphi \neq 0 
} 
\dfrac{
\left|
\int_{
\mathbb{R}^d
} 
\varphi(x) 
\mu_{1}(\dd x)  
- 
\int_{
\mathbb{R}^d
} 
\varphi(x) 
\mu_{2}(\dd x) 
\right| 
}{
\|\varphi \|_{0}
} \\
&=
\sup_{
\varphi 
\in C_{b}(\mathbb{R}^{d}), 
\ 
{ \color{black}\| \varphi \|_{0} \leq 1 }
}
\left|
\int_{\mathbb{R}^d} 
\varphi(x) 
\mu_{1}(\dd x)  
- 
\int_{\mathbb{R}^d} 
\varphi(x) 
\mu_{2}(\dd x) 
\right|.
\end{aligned}
\end{equation}
For the probability distribution of $X$ and any Borel probability distribution $\mu$, one also has
\begin{align} \label{equation:total-variation-between-rv-and-distribution-Cb}
\begin{split}
\|
\Pi(X)-\mu 
\|_{\text{TV}} 
&= 
\sup_{
\varphi 
\in 
C_{b}(\mathbb{R}^{d}), 
\ \varphi \neq 0 
}
\dfrac{
\left|
\mathbb{E}
\left[
\varphi(X) 
\right]   
- 
\int_{
\mathbb{R}^d
} 
\varphi(x) 
\mu(\dd x) 
\right| 
}{
\|\varphi \|_{0}
} \\
&=
\sup_{
\varphi 
\in C_{b}(\mathbb{R}^{d}), 
\ \|\varphi\|_{0} \leq 1 
}
\left|
\mathbb{E}
\left[
\varphi(X) 
\right]  
- \int_{\mathbb{R}^d} 
\varphi(x) \mu(\dd x) 
\right|.
\end{split}
\end{align}
Throughout the paper, we denote $f(x) := - \nabla U(x), \ x\in \mathbb{R}^{d}$, as the negative gradient of the potential $U$ for convenience.
Subsequently,
we set up a non-convex framework.
\begin{assumption} \label{assumption:globally-polynomial-growth-condition-paper3}
(Globally polynomial growth condition.)
Assume the drift coefficient $f: \mathbb{R}^{d} \rightarrow \mathbb{R}^{d}$ of SDE \eqref{eq:langevin-SODE} is twice continuously differentiable in $\mathbb{R}^{d}$, and there exists some constant $\gamma \in [1, \infty)$ such that,
\begin{equation}
\begin{aligned}
\left\|
D^{2}f(x) 
( v_{1}, v_{2}) 
\right\| 
&\leq 
C
{\color{black}
(
1+\|x\|
)^{\max\{0, \gamma-2 \} } 
}
\|v_{1} \|
\cdot 
\|v_{2} \|,
\quad 
\forall x, v_{1}, v_{2}\in \mathbb{R}^{d}. \\
\end{aligned}
\end{equation}
\end{assumption}
Note that Assumption \ref{assumption:globally-polynomial-growth-condition-paper3} immediately implies
\begin{equation} \label{equation:growth-of-derivative-of-f-paper3}
\begin{aligned}
\left\|
Df(x)  v_{1} 
- Df(\tilde{x}) v_{1}  
\right\| 
&\leq 
C
{\color{black}
(
1
+
\|x\|
+
\|\widetilde{x}\|
)^{\max\{0, \gamma-2 \}}
}
\|
x-\tilde{x}
\| 
\cdot 
\|v_{1} \|, 
\quad 
\forall x, \tilde{x}, v_{1}\in \mathbb{R}^{d}, \\
\left\|
Df(x)  v_{1}  
\right\| 
&\leq
C(
1+\|x\|
)^{\gamma-1}
\|v_{1} \| ,
\quad 
\forall x, v_{1}\in \mathbb{R}^{d},
\end{aligned}
\end{equation}
which in turn yields
\begin{equation} \label{equation:growth-of-f-paper3}
\begin{aligned}
\left\|
f(x)  - f(\tilde{x}) 
\right\| 
&\leq 
C
(
1
+
\|x\|
+
\|
\tilde{x}
\|
)^{\gamma-1}  
\|
x-\tilde{x}
\|,
\quad 
\forall x, \tilde{x}\in \mathbb{R}^{d}, \\
\left\|
f(x)   
\right\| 
&\leq 
C(
1
+
\|x\|
)^{\gamma} ,
\quad 
\forall x\in \mathbb{R}^{d}.
\end{aligned}
\end{equation}
In what follows, we formulate the \textit{contractivity at infinity} condition as shown in \cite{majka2020nonasymptotic}.
\begin{assumption} \label{assumption:contractivity-at-infinity-condition-paper3}
(Contractivity at infinity condition.)
For the drift coefficient $f: \mathbb{R}^{d} \rightarrow \mathbb{R}^{d}$ of SDE \eqref{eq:langevin-SODE},
there exist some positive constants $\widetilde{a}_{1}> \widetilde{a}_{2}>0$, $\mathcal{R} >0$ such that,
\begin{equation}
\left\langle 
x-y,f(x)-f(y) 
\right\rangle 
\leq 
\left(
\widetilde{a}_{1}
\mathbf{1}_{
\|x-y \| \leq \mathcal{R}
}
-\widetilde{a}_{2}
\right)
\| x-y\|^{2}, 
\quad 
\forall x, y \in \mathbb{R}^{d}.
\end{equation}
\end{assumption}
\begin{remark} \label{remark: one-side Lipschitz condition of the drfit-paper3}
It is noteworthy that Assumption \ref{assumption:contractivity-at-infinity-condition-paper3} implies a 
{\color{black}
one-sided
}
Lipschitz condition of the drfit $f: \mathbb{R}^{d}\rightarrow \mathbb{R}^{d}$ as follows: there exists some positive constant $L>0$ such that
\begin{equation} \label{eq:remark2.3-paper3}
\left\langle 
x-y,f(x)-f(y) 
\right\rangle 
\leq 
L\|x-y \|^{2}, 
\quad 
\forall x, y \in \mathbb{R}^{d}.
\end{equation}
\end{remark}
Furthermore,  we put 
{\color{black}
a more relaxed
dissipativity condition
}
on the drift $f$ as follows, compared to \cite[Assumption 3]{neufeld2022non} (see \cite[Remark 2.6]{neufeld2022non} for comparison).
\begin{assumption}\label{assumption:coercivity-condition-of-the-drift-paper3}
For the drift coefficient $f: \mathbb{R}^{d} \rightarrow \mathbb{R}^{d}$,
there exist some constants $a_{1}, a_{2}>0$
such that
{\color{black}
\begin{equation}
\left\langle 
x,f(x) 
\right\rangle 
\leq 
-a_{1}
\|
x 
\|^{2}
+ a_{2}, 
\quad 
\forall x\in \mathbb{R}^{d}.
\end{equation} }
\end{assumption}
Compared with the usual but strict {\color{black}strong convexity condition on $U$ (i.e., $\widetilde{a}_{1}=0$ in Assumption \ref{assumption:contractivity-at-infinity-condition-paper3}) and the global Lipschitz continuity condition on $\nabla U$ (i.e. $\gamma=1$ in Assumption \ref{assumption:globally-polynomial-growth-condition-paper3}),
}
the above settings enable us to accommodate for a much wider family of SDEs, especially for possible non-convex potentials. Here we present an example.



\begin{example}[Double-well potential]\label{eqn:doublewell}
    
Consider $f(x)=x(1-\|x\|^{2})$, which is the negative gradient of the double-well potential $U(x)=\|x \|^{4}/4 - \| x\|^{2}/2$. Such an $f$ violates \eqref{equation:lipschitz-u} and \eqref{equation:strongly-convex-u} but satisfies Assumptions \ref{assumption:globally-polynomial-growth-condition-paper3}-\ref{assumption:coercivity-condition-of-the-drift-paper3} with $\gamma=3$,  
{\color{black}
$a_{1}=1$, $a_{2}=1$,
}
{\color{black}
and $\widetilde{a}_{1} = 4\sqrt{2} + 19/2$, $\widetilde{a}_{2} = 1/2$, $\mathcal{R}=16+20\sqrt{2}$.
A brief proof is given in Appendix \ref{proof-of-example}.
}
\end{example}

To obtain  numerical approximations of the invariant measure allowed by such SDEs, 
we propose a family of projected Langevin Monte Carlo (PLMC) algorithms as follows:
\begin{equation} \label{equation:numerical-scheme-PLMC-algorithm-paper3}
\begin{split}
\left\{
    \begin{array}{ll}
    Y_{n+1} = 
    \mathscr{P}(Y_{n}) +  f\left(\mathscr{P}(Y_{n})\right) h 
    + \sqrt{2h}\xi_{n+1},
    \\
   Y_0 = x_0,
 \end{array}\right.
 \end{split}
\end{equation}
where
$\xi_{k} = ( \xi_{1,k}, \dots, \xi_{d,k})^{T}$, $k\in \mathbb{N}$,
are i.i.d standard $d$-dimensional Gaussian vectors.
{\color{black}
For a parameter $\vartheta \geq 1$, independent of the stepsize $h$ and the dimension $d$,
the projection operator $\mathscr{P}: \mathbb{R}^{d} \rightarrow \mathbb{R}^{d}$ is  defined by
\begin{equation} \label{equation:projection-operator-paper3}
\begin{aligned}
\mathscr{P}(x):=
\left\{
    \begin{array}{ll}
    \min\big\{
    1, 
    \vartheta
    ( 
    \tfrac{d}{h}
    )^{1/2\gamma} 
    \| x\|^{-1} 
     \big\}x,  \hspace{1em} \text{for}\ \gamma > 1,
    \\
   x,\ \hspace{10.7em} \text{for}\ \gamma = 1,
 \end{array}\right.
\quad \forall x\in \mathbb{R}^{d},
\end{aligned}
\end{equation}
with $\gamma$ being given in Assumption \ref{assumption:globally-polynomial-growth-condition-paper3}.
Here the parameter $\vartheta \geq 1$
can be customized to prevent
the numerical solutions of the PLMC scheme \eqref{equation:numerical-scheme-PLMC-algorithm-paper3} from being projected too often in the iteration for the case $\gamma > 1$. 
}
{\color{black}
Also, we let $\bold{0}/0 = \bold{0}$ for $\bold{0} := (0, \cdots, 0)^T \in \mathbb{R}^{d}$.
}
For the particular case $\gamma=1$, $\mathscr{P}=I$ is the identity operator.
The first main result of this paper is formulated as follows.
\begin{theorem}\label{theorem:main-result-paper3}
(Main result: non-asymptotic bounds in the total variation distance)
Assume Assumptions \ref{assumption:globally-polynomial-growth-condition-paper3}, \ref{assumption:contractivity-at-infinity-condition-paper3}, \ref{assumption:coercivity-condition-of-the-drift-paper3}. Let $\{X^{x_{0}}_{t}\}_{t \geq 0}$ and $\{Y^{x_{0}}_{n}\}_{n \geq 0}$
be the solutions of SDE \eqref{eq:langevin-SODE} and the PLMC algorithm \eqref{equation:numerical-scheme-PLMC-algorithm-paper3} with the initial state $X^{x_{0}}_{0}=Y^{x_{0}}_{0}=x_{0}$, respectively.
{\color{black}
Then the Langevin SDE $\{X^{x_{0}}_{t}\}_{t\geq 0}$ in \eqref{eq:langevin-SODE} converges exponentially to 
the invariant measure $\pi$ under the total variation distance.
}
Furthermore,
{\color{black}
let $h\in (0, \min\{1/2a_{1}, 2a_{1}/(a_{1}+2C^{2}_{f}), 1\} )$ be the uniform timestep with $a_{1}$, $C_{f}$ coming from Assumption \ref{assumption:coercivity-condition-of-the-drift-paper3}
and Lemma \ref{lemma: useful estimate of PLMC algorithm-paper3}, respectively.
}
{\color{black}
Given 
any $N \in \mathbb{N}$,
there exist some constants
$c_{\star} = C(\widetilde{a}_{1}, \widetilde{a_{2}}, \mathcal{R}) > 0$, $C_{\star} = C(\widetilde{a}_{1}, \widetilde{a_{2}}, \mathcal{R}) > 0$ with $\widetilde{a}_{1}$, $\widetilde{a}_{2}$, $\mathcal{R}$ determined in Assumption \ref{assumption:contractivity-at-infinity-condition-paper3} and $C = C(c_{\star}, C_{\star}, x_{0})$ such that,
}
\begin{equation}
\left\|
\Pi(Y^{x_{0}}_{N}) 
-  
\pi 
\right\|_{\text{TV}}
\leq 
{\color{black}
C_{\star} \|\phi \|_{0} e^{- c_{\star} Nh}
}
\left(
1
+
{
\color{black}
\mathbb{E}
\left[
\|x_{0}\|
\right]
}
\right) 
+ 
Cd^{
\max\{3\gamma/2 , 2\gamma-1 \} 
}
h
\left| 
\ln{h} 
\right|.
\end{equation}
\end{theorem}
{\color{black}
We mention that,
for the particular case of the Lipschitz drift $f$, i.e. $\gamma=1$,
the PLMC \eqref{equation:numerical-scheme-PLMC-algorithm-paper3} reduces to the classical LMC  \eqref{equation:numerical-scheme-euler-maruyama-paper3} 
and we can obtain an improved convergence result.
}
\begin{theorem}(Main result: the Lipschitz case)
\label{theorem:main-result-paper3-optimal}
Let Assumptions \ref{assumption:globally-polynomial-growth-condition-paper3}, \ref{assumption:contractivity-at-infinity-condition-paper3}, \ref{assumption:coercivity-condition-of-the-drift-paper3} hold with $\gamma=1$.
Let $\{X^{x_{0}}_{t}\}_{t\geq 0}$ and $\{\widetilde{Y}^{x_{0}}_{n} \}_{n \geq 0}$ be the solutions of SDE \eqref{eq:langevin-SODE} and the LMC algorithm \eqref{equation:numerical-scheme-euler-maruyama-paper3} with the same initial state $X^{x_{0}}_{0} = Y^{x_{0}}_{0} = x_{0}$, respectively. Also, let 
$h\in (0, \min\{1/2a_{1}, 2a_{1}/(a_{1}+2C^{2}_{f}), 1/4\} )$, 
where $a_{1}$ and $C_f$ are given in Assumption \ref{assumption:coercivity-condition-of-the-drift-paper3} and Lemma \ref{lemma: useful estimate of PLMC algorithm-paper3}, respectively,
be the uniform timestep. 
Then, given 
any $N \in \mathbb{N}$,
there exist some constants
$c_{\star} = C(\widetilde{a}_{1}, \widetilde{a_{2}}, \mathcal{R}) > 0$, $C_{\star} = C(\widetilde{a}_{1}, \widetilde{a_{2}}, \mathcal{R}) > 0$ with $\widetilde{a}_{1}$, $\widetilde{a}_{2}$, $\mathcal{R}$ determined in Assumption \ref{assumption:contractivity-at-infinity-condition-paper3} and $C = C(c_{\star}, C_{\star}, x_{0})$ such that,
{\color{black}
\begin{equation} \label{equation:main-result-lipschitz-case-paper3}
\left\|
\Pi(\widetilde{Y}^{x_{0}}_{N}) 
-  
\pi 
\right\|_{\text{TV}}
\leq 
C_{\star} \|\phi \|_{0} e^{- c_{\star} Nh}
\left(
1
+
\mathbb{E}
\left[
\|x_{0}\|
\right]
\right) 
+ Cd^{3/2}
h.
\end{equation}
}
\end{theorem}
%
%
\begin{remark}
    While completing this paper, we are aware of \cite{li2023unadjusted}, where 
    the convergence rates of the $L^p$-Wasserstein distance ($p\in [0, \infty)$)
    were obtained for the LMC \eqref{equation:numerical-scheme-euler-maruyama-paper3} with a sequence of decreasing step sizes under the contractivity at infinity condition on the H\"older continuous drift $f$ with linear growth.
    Instead, the main focus of this paper is the analysis of the total variation distance with super-linearly growing continuous drift $f$ and fixed step sizes.
\end{remark}
As a direct consequence of the above two theorems, we obtain the following results. 
{\color{black}
\begin{proposition} \label{theorem-mixing-time}
Let
Assumptions \ref{assumption:globally-polynomial-growth-condition-paper3}, \ref{assumption:contractivity-at-infinity-condition-paper3}, \ref{assumption:coercivity-condition-of-the-drift-paper3}
hold.
To achieve a given precision level $\epsilon > 0$ under total variation distance,
a required number of iterations of the PLMC algorithm \eqref{equation:numerical-scheme-PLMC-algorithm-paper3} is of order ${\mathcal{O}}\big(\tfrac{d^{\max\{3\gamma/2 , 2\gamma-1 \}}}{\epsilon} \cdot \ln (\tfrac{d}{\epsilon}) \cdot \ln (\tfrac{1}{\epsilon}) \big)$ for $\gamma>1$.
{\color{black}
For the particular case $\gamma=1$,
the mixing time of the LMC algorithm \eqref{equation:numerical-scheme-euler-maruyama-paper3}
required to approximate the target measure of the Langevin SDE \eqref{eq:langevin-SODE}
is of order
${\mathcal{O}}\big(\tfrac{d^{3/2}}{\epsilon}  \cdot \ln (\tfrac{1}{\epsilon}) \big)$.
}
\end{proposition}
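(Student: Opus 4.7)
The plan is to derive the mixing time by balancing the two error terms in the bounds from Theorem \ref{theorem:main-result} and Theorem \ref{theorem:main-result-optimal}. To guarantee $\|\Pi(Y^{x_{0}}_{N}) - \pi\|_{\text{TV}} \leq \epsilon$, it suffices to force each of the two summands to be bounded by $\epsilon/2$: the exponential ``burn-in'' term $C_{\star}\|\phi\|_{0} e^{-c_{\star}Nh}(1+\mathbb{E}[\|x_{0}\|])$ and the discretization error $C d^{\alpha} h |\ln h|$ (respectively $C d^{3/2} h$), where I denote $\alpha := \max\{3\gamma/2, 2\gamma-1\}$ for brevity.

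For the case $\gamma > 1$, I would first pick the stepsize $h$ by requiring $C d^{\alpha} h |\ln h| \leq \epsilon/2$. Solving this inequality is the only slightly delicate step because $h$ appears inside a logarithm; the trick is to take $h$ of the form $h = c \epsilon /(d^{\alpha}\ln(d/\epsilon))$ for a suitable constant $c$, then verify that with this choice $|\ln h| = \mathcal{O}(\ln(d/\epsilon))$, so the discretization-error inequality is satisfied up to constants. Next, to control the burn-in term I would impose $c_{\star}Nh \geq \ln\!\big(2 C_{\star}\|\phi\|_{0}(1+\mathbb{E}[\|x_{0}\|])/\epsilon\big)$, i.e. $Nh = \mathcal{O}(\ln(1/\epsilon))$. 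Combining the two displays,
\begin{equation*}
N \;=\; \frac{Nh}{h} \;=\; \mathcal{O}\!\left( \frac{\ln(1/\epsilon)}{\epsilon/(d^{\alpha}\ln(d/\epsilon))} \right) \;=\; \mathcal{O}\!\left( \frac{d^{\alpha}}{\epsilon}\,\ln\!\Big(\tfrac{d}{\epsilon}\Big)\,\ln\!\Big(\tfrac{1}{\epsilon}\Big) \right),
\end{equation*}
which is the claimed bound for $\gamma > 1$.

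For the Lipschitz case $\gamma = 1$, the discretization error in Theorem \ref{theorem:main-result-optimal} no longer carries a logarithmic factor, so the balancing simplifies. I would set $h = c\epsilon/d^{3/2}$ to force $C d^{3/2} h \leq \epsilon/2$, and again choose $Nh = \mathcal{O}(\ln(1/\epsilon))$ to kill the exponential term. Dividing yields immediately
\begin{equation*}
N \;=\; \mathcal{O}\!\left( \frac{d^{3/2}}{\epsilon}\,\ln\!\Big(\tfrac{1}{\epsilon}\Big) \right),
\end{equation*}
matching the second assertion.

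I do not anticipate a serious obstacle; the only care needed is the self-consistent treatment of $|\ln h|$ in the super-linear case, which one handles by the ansatz above and a quick check that the resulting $h$ is within the admissible range $(0,\min\{1/2a_{1}, 2a_{1}/(a_{1}+2C_{f}^{2}), 1\})$ for all small enough $\epsilon$. The constants appearing in $\mathcal{O}(\cdot)$ depend on $c_{\star}, C_{\star}, \|\phi\|_{0}, \mathbb{E}[\|x_{0}\|]$ but not on $d$, $\epsilon$, so the stated dimensional and accuracy scalings are genuine.
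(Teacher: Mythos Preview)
Your proposal is correct and follows essentially the same route as the paper: split the total-variation bound into the burn-in and discretization contributions, force each below $\epsilon/2$, solve the resulting constraints for $h$ and $Nh$, and divide. The only cosmetic difference is that the paper handles the transcendental inequality $h|\ln h|\le \epsilon/(2Cd^{\alpha})$ via the elementary fact that $x/\ln x\ge \widehat{C}$ holds once $x\ge 2\widehat{C}\ln\widehat{C}$, whereas you plug in the explicit ansatz $h=c\epsilon/(d^{\alpha}\ln(d/\epsilon))$ and verify; both lead to the same $1/h=\mathcal{O}\big(\tfrac{d^{\alpha}}{\epsilon}\ln(\tfrac{d}{\epsilon})\big)$.
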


 The proof of Proposition \ref{theorem-mixing-time} is straightforward and postponed to Appendix \ref{proof-of-mixing-time}.
}
To the best of our knowledge,
this is the first result considering the mixing time of the LMC algorithm under the total variation distance in a nonconvex setting.


\section{Preliminary results}
\label{section:Preliminary-results-paper3}
\subsection{A priori estimates of the Langevin SDE}
We begin with the lemma concerning the uniform moments estimate of $\{X_{t} \}_{t \geq 0}$, defined by \eqref{eq:langevin-SODE}.
\begin{lemma}\label{lemma:Uniform-moment-bounds-of-the-Langevin-SDE-paper3}
(Uniform moment bounds of the Langevin SDE) Let the solution of the Langevin SDE, denoted by $\{X_{t} \}_{t \geq 0}$  in \eqref{eq:langevin-SODE}, satisfy 
{\color{black}
Assumption
\ref{assumption:coercivity-condition-of-the-drift-paper3}. 
}
Then, 
{\color{black}
for any constant $c\in (0, 2a_{1})$ 
and any $p \in [1,\infty)$,
}
\begin{equation}
\mathbb{E} 
\left[
\left\|
X_{t}  
\right\|^{2p} 
\right] 
\leq 
e^{-cpt}
\mathbb{E}
\left[
\|x_{0}\|^{2p}
\right] 
+ 
{\color{black}
\tfrac{(
4p-2+2a_{2}
)^{p}}{p}
\left(
\tfrac{ p-1}{(2a_{1} - c)p}
\right)^{p-1}
}
d^{p},
\end{equation}
where 
$a_{1}, a_{2}>0$
are determined by Assumption
\ref{assumption:coercivity-condition-of-the-drift-paper3}.
\end{lemma}
The proof of Lemma \ref{lemma:Uniform-moment-bounds-of-the-Langevin-SDE-paper3} can be found in Appendix \ref{proof-of-lemma:Uniform-moment-bounds-of-the-Langevin-SDE-paper3}.
The following lemma states the existence and the uniqueness of the invariant measure induced by the Langevin SDEs \eqref{eq:langevin-SODE}.
\begin{lemma}
\label{lemma:Existence-and-uniqueness-of-the-invariant-measure-paper3}
(Existence and uniqueness of the invariant measure for the Langevin SDEs)
Let Assumptions \ref{assumption:globally-polynomial-growth-condition-paper3}, \ref{assumption:contractivity-at-infinity-condition-paper3} and  \ref{assumption:coercivity-condition-of-the-drift-paper3} hold.
{\color{black}
Then the Langevin SDEs \eqref{eq:langevin-SODE} $\{ X^{x_{0}}_{t}\}_{t \geq 0}$,
with the initial condition  $X_{0}=x_{0}$ admit a unique invariant measure, denoted by $\pi$.
}
In addition,
given $\phi \in C_{b}(\mathbb{R}^{d})$,
{\color{black}
for some constants 
$c_{\star} = C(\widetilde{a}_{1}, \widetilde{a_{2}}, \mathcal{R}) > 0$ 
and $C_{\star} = C(\widetilde{a}_{1}, \widetilde{a_{2}}, \mathcal{R}) > 0$
such that
}
\begin{equation} \label{equation: expoenetially decrease of SDE to the invariant measure}
\left|
\mathbb{E} 
\left[
\phi\left(X^{x_{0}}_{t}\right)
\right]
-
\int_{\mathbb{R}^d} 
\phi(x)
\pi(\mathrm{d} x)
\right| 
\leq 
{\color{black}
C_{\star} \|\phi\|_{0}
}
{\color{black}
e^{- c_{\star} t}
}
\left(
1+
{
\color{black}
\mathbb{E}\left[
\|x_{0}\|
\right]
}
\right), \quad \forall  t > 0.
\end{equation}
\end{lemma}
\begin{proof}
[Proof of Lemma \ref{lemma:Existence-and-uniqueness-of-the-invariant-measure-paper3}]
Equipped with Lemma \ref{lemma:Uniform-moment-bounds-of-the-Langevin-SDE-paper3}, 
{\color{black}
the existence of the invariant measure for SDE \eqref{eq:langevin-SODE} is obtained by the Krylov-Bogoliubov criterion \cite[Theorem 7.1]{da2006introduction}.
}

Moreover, the uniqueness of the invariant probability measure, defined as $\pi$, 
{\color{black}
can be derived from the Doob Theorem \cite[Theorem 2.1.3]{cerrai2001second}. 
}
To achieve this, the strong-Feller property and the irreducibility property need to be validated.
Indeed,
for the Langevin SDEs \eqref{eq:langevin-SODE}
it suffices to show that, given $\phi \in C_{b}(\mathbb{R}^{d})$, 
\begin{equation}
\label{equation:feller-in-lemma-paper3}
\big|
P_{t} \phi(x_{1}) - P_{t} \phi(x_{2})
\big|
\leq 
{\color{black}
C(t) \|\phi \|_{0} }
\cdot \|x_{1}-x_{2} \|,
\quad 
\forall t > 0,
\quad x_{1}, x_{2} \in \mathbb{R}^{d},
\end{equation}
where 
$P_{t}$ is the Markov semigroup of the corresponding SDEs \eqref{eq:langevin-SODE} (see \cite[Chapter 5]{da2006introduction}) denoted by
\begin{equation}
P_{t} \phi(x) = \mathbb{E}[\phi(X^{x}_{t})], \quad
 \phi \in C_{b}(\mathbb{R}^{d}).
\end{equation}
{\color{black}
Thanks to \eqref{equation:1st-derivative-of-U-paper3} and \eqref{eq:strong-feller-property-paper3} below, \eqref{equation:feller-in-lemma-paper3} is satisfied with
$C(t)=(
 e^{2Lt}-1 
)^{1/2}/2L^{1/2}t$,
which behaves as 
$\mathcal{O}(1/\sqrt{t})$ when $t \rightarrow 0$
and
grows exponentially
when $t \rightarrow \infty$.
}
The strong Feller property of $P_t$ is thus guaranteed.
Since the noise of
\eqref{eq:langevin-SODE} is additive and non-degenerate, 
{\color{black}
the irreducibility of SDEs \eqref{eq:langevin-SODE} is straightforward (see \cite[Proposition 2.3.3]{cerrai2001second} or \cite[Proposition 7.1.8]{da2006introduction} for example).
}
As a consequence, the Langevin SDE \eqref{eq:langevin-SODE} is ergodic.

{\color{black}
Moving on to the exponentially mixing property \eqref{equation: expoenetially decrease of SDE to the invariant measure}, it follows from Kantorovich–Rubinstein representation of the $L^{1}$-Wasserstein distance (see \cite[Proposition 2.1]{pages2023unadjusted}) and Theorem \ref{theorem:regular-estimate-of-u-and-its-derivatives-paper3} that, for any $t_{0} \in (0,t) \cap (0,1)$,
\begin{equation}
\begin{aligned}
\left|
\mathbb{E} 
\left[
\phi\left(X^{x_{0}}_{t}\right)
\right]
-
\int_{\mathbb{R}^d} 
\phi(x)
\pi(\mathrm{d} x)
\right|  
&=
\left|
\int_{\mathbb{R}^d} 
P_{t}\phi(x_{0})
-
P_{t}\phi(x)
\pi(\mathrm{d} x)
\right| \\
& =
\left|
\int_{\mathbb{R}^d} 
\mathbb{E}
\left[
P_{t_{0}}
\phi
\left(
X^{x_{0}}_{t-t_{0}}
\right)
-
P_{t_{0}}
\phi
\left(
X^{x}_{t-t_{0}}
\right)
\right]
\pi(\mathrm{d} x)
\right| \\
&\leq
C
\dfrac{\|\phi\|_{0}}{\sqrt{t_{0}}}
\int_{\mathbb{R}^d} 
\mathcal{W}_{1}
\big( \Pi(X^{x_{0}}_{t-t_{0}})
,
\Pi(X^{x}_{t-t_{0}})
\big)
\pi(\mathrm{d} x),
\end{aligned}
\end{equation}
where 
$\mathcal{W}_{1}(\mu, \upsilon) := \inf_{\Pi \in \mathcal{C}(\mu, \upsilon)} 
\int_{\mathbb{R}^{d} \times \mathbb{R}^{d}}
\|x-y \| \dd \Pi(x,y) 
$
denotes the $L^{1}$-Wasserstein distance between the probability distributions $\mu$ and $\upsilon$ with $\mathcal{C}(\mu, \upsilon)$ being the collection of measures on $\mathbb{R}^{d} \times \mathbb{R}^{d}$ having $\mu$ and $\upsilon$ as marginals.
Besides, by \cite[Theorem 1.4]{luo2016exponential}, there exist two constants $c_{\star} = C(\widetilde{a}_{1}, \widetilde{a_{2}}, \mathcal{R}) > 0$ and $C_{\star} = C(\widetilde{a}_{1}, \widetilde{a_{2}}, \mathcal{R}) > 0$ with $\widetilde{a}_{1}$, $\widetilde{a}_{2}$, $\mathcal{R}$ given in Assumption \ref{assumption:contractivity-at-infinity-condition-paper3} such that
\begin{equation}
\mathcal{W}_{1}
(X^{x_{0}}_{t-t_{0}}
,
X^{x}_{t-t_{0}}
)
\leq
C_{\star} e^{-c_{\star}(t-t_{0})}
\mathbb{E}
\left[
\|x_{0} - x\|
\right].
\end{equation}
Equipped with the property of the function $\phi \in C_{b}(\mathbb{R}^{d})$ and the estimate above, we obtain that
\begin{equation}
\begin{aligned}
\left|
\mathbb{E} 
\left[
\phi\left(X^{x_{0}}_{t}\right)
\right]
-
\int_{\mathbb{R}^d} 
\phi(x)
\pi(\mathrm{d} x)
\right| 
&\leq
\min\left\{
2\|\phi\|_{0}, \
C_{\star} 
\dfrac{e^{-c_{\star}(t-t_{0})}}{\sqrt{t_{0}}}
\int_{\mathbb{R}^d} 
\mathbb{E}
\left[
\|x_{0} - x\|
\right]
\pi
(\dd x)
\right\} \\
& \leq
C_{\star}
\|\phi\|_{0}
e^{-c_{\star}t}
\left(
1+ \mathbb{E}[\|x_{0}\|]
\right),
\end{aligned}
\end{equation}
as required.
}
%
\end{proof}
{\color{black}
We remark that the exponential convergence result \eqref{equation: expoenetially decrease of SDE to the invariant measure}
can be also obtained due to \cite[Theorem 2.5]{goldys200512}.
}
{\color{black}
Owing to \eqref{equation:tv-distance-of-two-measures} and Lemma \ref{lemma:Existence-and-uniqueness-of-the-invariant-measure-paper3},
}
it immediately implies $\Pi(X^{x_{0}}_{t})$ 
converges exponentially to an invariant measure in the total variation distance as below,
{\color{black}
\begin{equation}\label{equation:convergence-sode-paper3}
\big\|
\Pi\left(X^{x_{0}}_{t}\right) - \pi 
\big\|_{\text{TV}} 
\leq C_{\star}
\|\phi\|_{0}
{\color{black}
e^{-c_{\star} t}
}
\left(
1+\mathbb{E}
\left[
\|x_{0}\|
\right]
\right), 
\quad \forall t > 0.
\end{equation}
}
To sample from the invariant measure $\pi$ of the Langevin SDE \eqref{eq:langevin-SODE}, in general we require the existence, not the uniqueness, of the invariant measure induced by the PLMC algorithm (see \eqref{equation:error-decomposition-tv-paper3} or \cite{dalalyan2017theoretical,majka2020nonasymptotic,neufeld2022non,pages2023unadjusted,durmus2019high} for example).
The main task of this article is to reveal how  fast the law $\Pi(Y_{n})$ converges to the invariant measure.


\subsection{A priori estimates of the PLMC algorithm}
In the following we give some useful properties of the PLMC algorithm \eqref{equation:numerical-scheme-PLMC-algorithm-paper3}.
\begin{lemma} \label{lemma: useful estimate of PLMC algorithm-paper3}
Let Assumption \ref{assumption:globally-polynomial-growth-condition-paper3} hold. Then, for any $ x\in \mathbb{R}^{d}$,
the following estimates hold true
\begin{equation}
\begin{aligned}
\|\mathscr{P}(x) \| 
&\leq 
\|x\|,
\\
\|\mathscr{P}(x) \| 
&\leq 
 \min
 \left\{ 
 \|x \| , 
 \vartheta
 d^{1/2\gamma} h^{-1/2\gamma}
 \right\},\  \text{for} \ \gamma>1, \\
\| 
f(\mathscr{P}(x)) 
\| 
&\leq 
C_{f}
\left[
\mathbf{1}_{\gamma >1}
\vartheta ^{\gamma}
d^{1/2} h^{-1/2}
+
\mathbf{1}_{\gamma =1}
(1+\| x\|)
\right],
\end{aligned}
\end{equation}
where $C_{f}$ is a constant depending only on the drift $f$.
\end{lemma}
\begin{proof}[Proof of Lemma \ref{lemma: useful estimate of PLMC algorithm-paper3}]
{\color{black}
The proof is straightforward and omitted here. Similar assertions can be found in \cite[Lemma 4.2]{pang2024linear}.
}
\end{proof}
The lemma below provides the uniform moment bounds for the PLMC algorithm \eqref{equation:numerical-scheme-PLMC-algorithm-paper3}.
\begin{lemma}\label{lemma:Uniform-moment-bounds-of-the-PLMC-paper3}(Uniform moment bounds of the PLMC algorithm)
Let Assumptions \ref{assumption:globally-polynomial-growth-condition-paper3}, \ref{assumption:contractivity-at-infinity-condition-paper3}, \ref{assumption:coercivity-condition-of-the-drift-paper3} be fulfilled and let 
{\color{black}
$h\in (0, \min\{1/2a_{1}, 2a_{1}/(a_{1}+2C^{2}_{f}), 1\} )$
}be the uniform timestep.
Let the numerical approximations $\{Y_{n} \}_{n \geq 0}$ {\color{black}be} produced by the PLMC algorithm \eqref{equation:numerical-scheme-PLMC-algorithm-paper3}.
{\color{black}
Then, for 
any $p\in [1,\infty) \cap \mathbb{N}$,
}
{\color{black}
\begin{equation}
\mathbb{E} 
\left[ 
\left\| Y_{n}  \right\|^{2p} 
\right] 
\leq e^{-\frac{a_{1}}{2} t_{n}}
\mathbb{E}\left[
(1 + \|{x}_{0} \|^{2})^{p}
\right] + Cd^{p},
\end{equation}
}
where $t_{n}:=nh$, $n \geq 0$,
{\color{black}
and $C=C(a_{1}, a_{2},C_{f}, \vartheta, \gamma, p)$.
}
\end{lemma}
\begin{proof}[Proof of Lemma \ref{lemma:Uniform-moment-bounds-of-the-PLMC-paper3}]


{\color{black}
By \eqref{equation:numerical-scheme-PLMC-algorithm-paper3} and Assumption \ref{assumption:coercivity-condition-of-the-drift-paper3}, it is straightforward to show that, 
}
for $n \in\{0,1,2, \ldots, N-1\}$, $N\in \mathbb{N}$,
\begin{equation} \label{equation:squared-PLMC-algorithm-paper3}
\begin{aligned}
 \|Y_{n+1} \|^{2} 
 &= \big\|
 \mathscr{P}(Y_{n}) 
 +  f\left(\mathscr{P}(Y_{n})\right) h 
 + \sqrt{2h} \xi_{n+1} 
 \big\|^{2} \\
 & = \|\mathscr{P}(Y_{n}) \|^{2} 
 + 2h \left\langle 
 \mathscr{P}(Y_{n}),  f\left(\mathscr{P}(Y_{n})\right)
 \right\rangle 
 + 2\sqrt{2h} 
 \left\langle \mathscr{P}(Y_{n}),  
 \xi_{n+1} \right\rangle 
 + 2\sqrt{2}h^{3/2}  
 \left\langle f\left(\mathscr{P}(Y_{n})\right),  
 \xi_{n+1} \right\rangle \\
 & \quad + h^{2} \| f\left(\mathscr{P}(Y_{n})\right)\|^{2} + 2h\|\xi_{n+1} \|^{2} \\
&  {\color{black}
\leq
(1-2a_{1}h)
\|\mathscr{P}(Y_{n}) \|^{2}
+
2\sqrt{2h} 
 \left\langle \mathscr{P}(Y_{n}),  
 \xi_{n+1} \right\rangle 
 + 2\sqrt{2}h^{3/2}  
 \left\langle f\left(\mathscr{P}(Y_{n})\right),  
 \xi_{n+1} \right\rangle 
}\\
& \quad
{\color{black}
+ h^{2} \| f\left(\mathscr{P}(Y_{n})\right)\|^{2}
 + 2h\|\xi_{n+1} \|^{2} + 2a_{2}h .}
\end{aligned}
\end{equation}
Using the Young inequality and Lemma \ref{lemma: useful estimate of PLMC algorithm-paper3} yields
\begin{equation}\label{equation:young-inequality-for-crossing-term-paper3}
\begin{aligned}
2\sqrt{2}h^{3/2}  
\left\langle 
f\left(\mathscr{P}(Y_{n})\right),  
\xi_{n+1} 
\right\rangle 
&\leq  
\tilde{\epsilon}
h^{2} 
\|
f\left(
\mathscr{P}(Y_{n})
\right)
\|^{2} 
+ \tfrac{2h}{\tilde{\epsilon}}
\|\xi_{n+1} \|^{2} \\
& 
{\color{black}
\leq 
\tilde{\epsilon}
h^{2}
C^{2}_{f}
\left[
\textbf{1}_{\gamma>1}
\vartheta^{2\gamma}
dh^{-1}
+
\textbf{1}_{\gamma=1}
\left(
1 + 
\left\|
\mathscr{P}(Y_{n})
\right\|
\right)^{2}
\right]
+
\tfrac{2h}{\tilde{\epsilon}}
\|\xi_{n+1} \|^{2}.
}
\end{aligned}
\end{equation}
{\color{black}
Before proceeding further,
we claim that the following estimate holds true for $\gamma \geq 1$,
\begin{equation}  \label{equation: basic expansion of PLMC algorithm-paper3}
\begin{aligned}
1+\|Y_{n+1} \|^{2} 
\leq 
(1-a_{1}h)
\left(
1+\|\mathscr{P}(Y_{n}) \|^{2}
\right)
(1+\Xi_{n+1})  
+ Cdh,
\end{aligned}
\end{equation}
where
\begin{equation}
\Xi_{n+1}:= 
\underbrace{
\dfrac{
2\sqrt{2h}
\left\langle \mathscr{P}(Y_{n}),  \xi_{n+1} 
\right\rangle
}{
(1-a_{1}h)
\left(
1+\|\mathscr{P}(Y_{n}) \|^{2} 
\right) 
}
}_{=:I_{1}} 
+ \underbrace{
\dfrac{
\color{black}
C_{\gamma}h\|\xi_{n+1} \|^{2}
}{
(1-a_{1}h)\left(1+\|\mathscr{P}(Y_{n}) \|^{2} \right) 
}
}_{=:I_{2}},
\end{equation}
{
\color{black}
with
$C_{\gamma}:= \textbf{1}_{\gamma>1}4 + \textbf{1}_{\gamma=1}(4C^{2}_{f}/a_{1} + 2)$
and $C = C(a_{1}, a_{2},C_{f}, \vartheta, \gamma) > 0$.
}
The proof of \eqref{equation: basic expansion of PLMC algorithm-paper3} is divided into two cases depending on different ranges of $\gamma$.
For the case that $\gamma>1$,
we choose $\tilde{\epsilon}=1$ in \eqref{equation:young-inequality-for-crossing-term-paper3} and
obtain that,
for some constant $C = C(a_{1}, a_{2},C_{f}, \vartheta)>0$,
\begin{equation} \label{equation:replace-f-with-F}
\begin{aligned}
 \|Y_{n+1} \|^{2} 
 \leq (1-2a_{1}h)
 \|\mathscr{P}(Y_{n}) \|^{2} 
+ 2\sqrt{2h} 
\left\langle 
\mathscr{P}(Y_{n}),  
\xi_{n+1} 
\right\rangle 
+ 4h\|\xi_{n+1}\|^{2} 
+ Cdh.
\end{aligned}
\end{equation}
Bearing $1-2a_{1}h \leq 1-a_{1}h$ in mind, the treatment for the case that $\gamma > 1$ is thus finished.
For the case that $\gamma=1$,
we recall
\eqref{equation:young-inequality-for-crossing-term-paper3} with $\tilde{\epsilon} = a_{1}/2C^{2}_{f}$ and use the fundamental inequality $(1+\|x \|)^{2} \leq 2(1 + \| x\|^{2})$, $\forall x\in \mathbb{R}^{d}$,
to obtain
\begin{equation}\label{equation:young-inequality-for-crossing-term-paper3-lip}
\begin{aligned}
2\sqrt{2}h^{3/2}  
\left\langle 
f\left(\mathscr{P}(Y_{n})\right),  
\xi_{n+1} 
\right\rangle 
& \leq
\tfrac{a_{1}}{2}
h^{2}
\left(
1 + 
\left\|
\mathscr{P}(Y_{n})
\right\|
\right)^{2}
+
\tfrac{
4C^{2}_{f}
}{
a_{1}
}
h\|\xi_{n+1} \|^{2}
\\
&\leq 
a_{1}
h^{2}
\|
\mathscr{P}(Y_{n})
\|^{2}
+
a_{1}
h^{2}
+
\tfrac{
4C^{2}_{f}
}{
a_{1}
}
h\|\xi_{n+1} \|^{2}.
\end{aligned}
\end{equation}
Moreover, using Lemma \ref{lemma: useful estimate of PLMC algorithm-paper3} and the fundamental inequality again
shows
\begin{equation}
\| f\left(\mathscr{P}(Y_{n})\right)\|^{2}
\leq
2C^{2}_{f} 
\|\mathscr{P}(Y_{n})\|^{2}
+2C^{2}_{f}.
\end{equation}
Combining these estimates with \eqref{equation:squared-PLMC-algorithm-paper3}
and recalling $h\in (0, \min\{1/2a_{1}, 2a_{1}/(a_{1}+2C^{2}_{f}) ,1\} )$
yield, for some constant $C=C(a_{1}, a_{2}, C_{f})$,
\begin{small}
\begin{equation}
\begin{aligned}
\|Y_{n+1} \|^{2} 
 &\leq 
\left[
1-
\left(
2a_{1}-a_{1}h-2C^{2}_{f}h 
\right)h
\right]
 \|\mathscr{P}(Y_{n}) \|^{2} 
+ 2\sqrt{2h} 
\left\langle 
\mathscr{P}(Y_{n}),  
\xi_{n+1} 
\right\rangle 
+ 
\left(
\tfrac{
4C^{2}_{f}
}{
a_{1}
} +2 
\right)
h\|\xi_{n+1}\|^{2} 
+ Ch 
\\
& \leq
(1-a_{1}h)
 \|\mathscr{P}(Y_{n}) \|^{2} 
+ 2\sqrt{2h} 
\left\langle 
\mathscr{P}(Y_{n}),  
\xi_{n+1} 
\right\rangle 
+ 
\left(
\tfrac{
4C^{2}_{f}
}{
a_{1}
} +2 
\right)
h\|\xi_{n+1}\|^{2} 
+ Ch.
\end{aligned}
\end{equation}
\end{small}
}
{\color{black}
Therefore, the estimate of \eqref{equation: basic expansion of PLMC algorithm-paper3} for $\gamma=1$ is finished.
}

Following the binomial expansion theorem and taking the 
{\color{black}
conditional expectation 
}with respect to $\mathcal{F}_{t_{n}}$ on both sides of \eqref{equation: basic expansion of PLMC algorithm-paper3} show that,
\begin{align}\label{equation: conditional expectation of the expansion of PLMC algorithm}
    \begin{split}
 &\mathbb{E}
\left[
(1+\|Y_{n+1} \|^{2})^{p} 
\big| 
\mathcal{F}_{t_{n}} 
\right] \\
&\leq   
\underbrace{
(1-a_{1}h)^{p}
(1 + \|\mathscr{P}(Y_{n}) \|^{2})^{p}  
\mathbb{E} 
\left[
(1 + \Xi_{n+1}) ^{p} \big| \mathcal{F}_{t_{n}} 
\right] 
}_{=:\mathbb{I}_{1}}\\
&\quad + 
\underbrace{
\color{black}
\sum_{\ell=1}^{p} 
\mathcal{C}_{p}^{\ell} 
(Cdh)^{\ell}
(1-a_{1}h)^{p-\ell}  
(
1 + \|\mathscr{P}(Y_{n}) \|^{2}
)^{p-\ell}
\mathbb{E} 
\left[
(1 + \Xi_{n+1}) ^{p-\ell} 
\big| \mathcal{F}_{t_{n}} 
\right] 
}_{=:\mathbb{I}_{2}},       
    \end{split}
\end{align}
{\color{black}
where $\mathcal{C}^{\ell}_{p}:=p !/(\ell ! (p-\ell)!)$.
}
Further analysis is based on estimates of $\mathbb{I}_{1}$ and $\mathbb{I}_{2}$.

\noindent \textbf{For the estimate of $\mathbb{I}_{1}$:}

The key component 
of the estimate $\mathbb{I}_{1}$
is 
\begin{equation}
(1 + \|\mathscr{P}(Y_{n}) \|^{2})^{p}
\mathbb{E} 
\left[
(1 + \Xi_{n+1}) ^{p} \big| \mathcal{F}_{t_{n}} 
\right].
\end{equation}
We use the binomial expansion theorem again to deduce
{\color{black}
\begin{align}
    \begin{split}
       & \mathbb{E} 
\left[
(1 + \Xi_{n+1}) ^{p} \big| \mathcal{F}_{t_{n}} 
\right] 
= \sum_{i=0}^{p}
\mathcal{C}_{p}^{i} 
\mathbb{E}
\left[
\Xi_{n+1}^{i} \big| \mathcal{F}_{t_{n}}
\right]. \\
    \end{split}
\end{align}
}
{\color{black}
Then, the estimate for $\mathbb{I}_{1}$ is decomposed
further into the following three steps.
}
\begin{description}
\item[Step I: estimate of]
$\mathbb{E}\left[\Xi_{n+1} \big| \mathcal{F}_{t_{n}} \right]$

Based on the property of the Gaussian random variable  and the fact that $\xi_{n+1}$ is independent of $\mathcal{F}_{t_{n}}$, we deduce
\begin{equation} \label{equation:property-of-Brownian-motion-paper3}
\begin{aligned}
\mathbb{E} 
\left[ 
\xi_{j,n+1} \big| \mathcal{F}_{t_{n}}  
\right]  =0, 
\quad 
\mathbb{E} 
\left[
| \xi_{j,n+1}|^{2} \big| \mathcal{F}_{t_{n}}  
\right]  =1, 
\quad j \in \{1, \ldots, d\},
\end{aligned}
\end{equation}
resulting in
{\color{black}
\begin{equation} 
\begin{aligned}
\mathbb{E}
\left[
\Xi_{n+1} \big| \mathcal{F}_{t_{n}} 
\right] 
=
\mathbb{E}
\left[
I_{2} | \mathcal{F}_{t_{n}}  
\right]
= \dfrac{C_{\gamma}dh}
{ (1-a_{1}h)\left(1+\|\mathscr{P}(Y_{n}) \|^{2} \right) }.
\end{aligned}
\end{equation}
}

\item[Step II: estimate of]
$\mathbb{E}\left[\Xi^{2}_{n+1} \big| \mathcal{F}_{t_{n}} \right]$

Recalling some power properties of the Gaussian random variable, we derive that, $\forall \ell \in \mathbb{N}$,
\begin{equation} \label{equation:power-property-of-Brownian-motion-paper3}
\begin{aligned}
\mathbb{E} 
\left[
\left(
\xi_{j,n+1}
\right)^{2\ell-1}  
\big| \mathcal{F}_{t_{n}} 
\right] =0,\quad
\mathbb{E} 
\left[
\left(
\xi_{j,n+1}
\right)^{2\ell}  
\big| \mathcal{F}_{t_{n}} 
\right] 
=  (2 \ell -1)!! ,  
\quad \forall \ n \in \mathbb{N},\ j \in \{1, \ldots, d\},
\end{aligned}
\end{equation}
where 
{\color{black}
$(2 \ell -1)!! := \Pi_{i=1}^{\ell}(2i-1)$. 
}
{\color{black}
Since $\mathscr{P}(Y_{n})$ is $\mathcal{F}_{t_{n}}$-measurable and
\begin{equation}
\mathbb{E} 
\left[
\xi_{j,n+1}
\|\xi_{n+1} \|^{2} 
\big| \mathcal{F}_{t_{n}} 
\right] =0,
\quad \forall \ n \in \mathbb{N},\ j \in \{1, \ldots, d\},
\end{equation}
one can obtain
\begin{equation} 
\begin{aligned}
\mathbb{E}
\left[
I_{1}I_{2} \big| \mathcal{F}_{t_{n}}  
\right] 
= 0.
\end{aligned}
\end{equation} 
}
By \eqref{equation:property-of-Brownian-motion-paper3}, we immediately deduce that
{\color{black}
\begin{equation} \label{equation:estimate-of-I1 square}
\begin{aligned}
\mathbb{E}
\left[
(I_{1})^{2} \big| \mathcal{F}_{t_{n}}  
\right] 
&=
\dfrac{
8
h 
\mathbb{E}
\left[
\big|
\sum^{d}_{j=1}
[
\mathscr{P}(Y_{n}) 
]_j
\
\xi_{j, n+1}
\big|^{2}
\Big| \mathcal{F}_{t_{n}} 
\right]
}{
(1-a_{1}h)^{2} (1 + \|\mathscr{P}(Y_{n}) \|^{2})^{2} 
} \\
&=
\dfrac{
8
h 
\sum^{d}_{j=1}
\big|
[
\mathscr{P}(Y_{n}) 
]_j
\big|^{2}
}{
(1-a_{1}h)^{2} (1 + \|\mathscr{P}(Y_{n}) \|^{2})^{2} 
} \\
&\leq 
\dfrac{8h  }{(1-a_{1}h)^{2} (1 + \|\mathscr{P}(Y_{n}) \|^{2}) } ,
\end{aligned}
\end{equation}
where $\mathscr{P}(x) = 
([\mathscr{P}(x)]_1, \cdots, [\mathscr{P}(x)]_d )^{T}$.
}
In addition, for any $\ell \in [2, \infty) \cap \mathbb{N}$, it holds that
\begin{equation} \label{equation:estimate-of-I2-paper3}
\begin{aligned}
\mathbb{E}
\left[ 
|I_{2}|^{\ell} \big| \mathcal{F}_{t_{n}}  
\right] 
&
{\color{black} \leq}
\dfrac{ \color{black} C_{\gamma}^{\ell} \times (2\ell-1)!! \times
d^{\ell}h^{\ell} 
}{
(1-a_{1}h)^{\ell} 
(
1 + \|\mathscr{P}(Y_{n}) \|^{2}
)^{\ell} 
}.
\end{aligned}
\end{equation}
In summary, we have
{\color{black}
\begin{equation} 
\begin{aligned}
\mathbb{E}
\left[
\Xi_{n+1}^{2} \big| \mathcal{F}_{t_{n}} 
\right] 
&= \mathbb{E}
\left[
I_{1}^2 | \mathcal{F}_{t_{n}}  
\right]
+
\mathbb{E}
\left[
I_{2}^2 | \mathcal{F}_{t_{n}}  
\right] \\
&\leq 
\dfrac{8h   }
{(1-a_{1}h)^{2}
(
1 + \|\mathscr{P}(Y_{n}) \|^{2}
) }
+
\dfrac{ 3 C^{2}_{\gamma} d^{2}h^{2}  }
{
(1-a_{1}h)^{2} 
(
1 + \|\mathscr{P}(Y_{n}) \|^{2}
)^{2} 
}
.
\end{aligned}
\end{equation}
}


\item[Step III: estimate of] $\mathbb{E}\left[\Xi^{\ell}_{n+1} \big| \mathcal{F}_{t_{n}} \right]$, $\ell \in [3,p] \cap \mathbb{N}$

It follows from \eqref{equation:power-property-of-Brownian-motion-paper3} and the Cauchy-Schwarz inequality that, for $\ell \in [3,p) \cap \mathbb{N}$,
\begin{equation} \label{equation:estimate-of-I1}
\begin{aligned}
\mathbb{E}
\left[ 
|I_{1}|^{\ell}
\big| 
\mathcal{F}_{t_{n}}  
\right] 
&\leq 
\dfrac{
{\color{black}
C
h^{\ell/2}
\|\mathscr{P}(Y_{n}) \|^{\ell}
\mathbb{E} \left[
\| \xi_{n+1}\|^{\ell}
\big| 
\mathcal{F}_{t_{n}} 
\right]
}
}{
(1-a_{1}h)^{\ell} 
(
1 + \|\mathscr{P}(Y_{n}) \|^{2}
)^{\ell } 
}  
\\
&\leq
\dfrac{
{
\color{black}
C d^{\ell/2}h^{\ell/2}  
}
}{
(1-a_{1}h)^{\ell} 
(
1 + \|\mathscr{P}(Y_{n}) \|^{2}
)^{\ell/2 } 
} .
\end{aligned}
\end{equation}
{\color{black}
Bearing \eqref{equation:estimate-of-I2-paper3} and the fundamental inequality $(a + b)^{\ell} \leq 2^{\ell-1}(a^{\ell} + b^{\ell})$, for $a, b \in \mathbb{R}$, $\ell \in [1,p] \cap \mathbb{N}$, in mind, 
}
we deduce 
\begin{equation} 
\begin{aligned}
\mathbb{E}
\left[
\Xi_{n+1}^{\ell} \big| \mathcal{F}_{t_{n}} 
\right] 
&= \mathbb{E}
\left[ 
(I_{1}+I_{2})^{\ell} \big| \mathcal{F}_{t_{n}}  
\right] \\
&\leq 2^{\ell-1} 
\Big(
\mathbb{E}
\left[
|I_{1}|^{\ell} \big| \mathcal{F}_{t_{n}} 
\right] 
+ \mathbb{E}\left[
|I_{2}|^{\ell} \big| \mathcal{F}_{t_{n}} 
\right] 
\Big) \\
&\leq 
\dfrac{
Cd^{\ell/2}h^{\ell/2}  
}{(1-a_{1}h)^{\ell} 
(1 + \|\mathscr{P}(Y_{n}) \|^{2})^{\ell/2} }
+
\dfrac{Cd^{\ell}h^{\ell}  }{(1-a_{1}h)^{\ell} (1 + \|\mathscr{P}(Y_{n}) \|^{2})^{\ell} }  .
\end{aligned}
\end{equation}
\end{description}
Combining \textbf{Step I}$\sim $\textbf{Step III} and the fact $(1-a_{1}h)^{\ell} \geq (1-a_{1}h)^{p}$, $\ell \in [1,p] \cap \mathbb{N}$, yields, 
{\color{black}
for some constants $C=C(p)$,
}
\begin{small}
\begin{equation} \label{equation:prior-estimate-of-mathbb-I1-paper3}
\begin{aligned}
\mathbb{I}_{1}
&
{\color{black}
\leq (1-a_{1}h)^{p}(
1 + \|\mathscr{P}(Y_{n}) \|^{2}
)^{p} 
\Bigg(
1
+
\dfrac{
p
C_{\gamma}
dh
}
{
(1-a_{1}h)\left(1+\|\mathscr{P}(Y_{n}) \|^{2} \right) 
}
+ 
\dfrac{ 4p(p-1)h  }
{(1-a_{1}h)^{2} (1 + \|\mathscr{P}(Y_{n}) \|^{2}) }
}
\\
&\quad
{\color{black}
+
\dfrac{ C^{2}_{\gamma}
p(p-1)d^{2}h^{2}  }
{2(1-a_{1}h)^{2} (1 + \|\mathscr{P}(Y_{n}) \|^{2})^{2} }
+
\sum_{\ell=3}^{p}
\dfrac{
Cd^{\ell/2}h^{\ell/2} 
}{(1-a_{1}h)^{\ell} 
(1 + \|\mathscr{P}(Y_{n}) \|^{2})^{\ell/2 } }
+
\dfrac{Cd^{\ell}h^{\ell}  }{(1-a_{1}h)^{\ell} (1 + \|\mathscr{P}(Y_{n}) \|^{2})^{\ell} } 
\Bigg)
} \\
& \leq
(1-a_{1}h)^{p}
(
1 
+ 
\|\mathscr{P}(Y_{n}) \|^{2}
)^{p} 
+ Cdh 
(
1 
+ 
\|\mathscr{P}(Y_{n}) \|^{2}
)^{p-1}
+ Cd^{2}h^{2} 
(
1 
+ 
\|\mathscr{P}(Y_{n}) \|^{2}
)^{p-2} \\
&\quad 
+ \sum_{\ell=3}^{p}
Cd^{\ell/2}
h^{\ell/2} 
(
1 
+ 
\|\mathscr{P}(Y_{n}) \|^{2}
)^{p-\ell/2}
+ 
Cd^{\ell}h^{\ell} 
(
1 
+ 
\|\mathscr{P}(Y_{n}) \|^{2}
)^{p-\ell}
.
\end{aligned}
\end{equation}
\end{small}
{\color{black}
For any $k \in [1,p]$ and $\epsilon_{1} \in (0, a_{1}/4p)$, 
employing the Young inequality yields
}
\begin{equation}
\begin{aligned}
Cd^{k} (1 + \|\mathscr{P}(Y_{n}) \|^{2})^{p-k}
\leq 
\epsilon_{1} (1 + \|\mathscr{P}(Y_{n}) \|^{2})^{p} 
+ C_{\epsilon_{1}}d^{p},
\end{aligned}
\end{equation}
{\color{black}
where $C_{\epsilon_{1}} = C(a_{1}, C_{f}, p, \gamma) > 0$.
}
Therefore, the estimate \eqref{equation:prior-estimate-of-mathbb-I1-paper3} can be rewritten as
\begin{equation} 
\begin{aligned}
\mathbb{I}_{1}
&\leq 
(1-a_{1}h)^{p}(
1 + \|\mathscr{P}(Y_{n}) \|^{2}
)^{p} 
+ p\epsilon_{1} h (
1 + \|\mathscr{P}(Y_{n}) \|^{2}
)^{p} 
+
C_{\epsilon_{1}}d^{p} h.
\end{aligned}
\end{equation}

\noindent \textbf{For the estimate of $\mathbb{I}_{2}$:}

To handle $\mathbb{I}_{2}$, one follows similar arguments of treating $\mathbb{I}_{1}$, 
{\color{black}
but does not require as fine an estimate as $\mathbb{I}_1$.}
{\color{black} For
{\color{black}
the step size $h\in (0,1)$ and}
$\ell \in [1,p] \in \mathbb{N}$, 
there exist some constants $C = C(p) > 0$ such that,
\begin{equation}\label{equation:rough-estimate-II2-revision-paper3}
\begin{aligned}
&(
1-a_{1}h
)^{p-\ell}
(
1 + \|\mathscr{P}(Y_{n}) \|^{2}
)^{p-\ell}
\mathbb{E} 
\left[
(1 + \Xi_{n+1}) ^{p-\ell} 
\big| \mathcal{F}_{t_{n}} 
\right]  \\
&\leq
(1-a_{1}h)^{p-\ell}
(
1 
+ 
\|\mathscr{P}(Y_{n}) \|^{2}
)^{p-\ell} 
+ Cdh 
(
1 
+ 
\|\mathscr{P}(Y_{n}) \|^{2}
)^{p-\ell-1}
+ Cd^{2}h^{2} 
(
1 
+ 
\|\mathscr{P}(Y_{n}) \|^{2}
)^{p-\ell-2} \\
&\quad 
+ \sum_{i=3}^{p-\ell}
Cd^{i/2}
h^{i} 
(
1 
+ 
\|\mathscr{P}(Y_{n}) \|^{2}
)^{p-\ell - i/2}
+ 
Cd^{i}h^{i} 
(
1 
+ 
\|\mathscr{P}(Y_{n}) \|^{2}
)^{p-\ell-i} \\
& \leq
C
\left[
(
1
+ 
\|\mathscr{P}(Y_{n}) \|^{2}
)^{p-\ell}
+
d^{p-\ell}
\right],
\end{aligned}
\end{equation}
where we have used the Young inequalities as
\begin{equation}
\begin{aligned}
\color{black}
a^{i/2} b ^{p-\ell - i/2}
\leq
\tfrac{ia^{p-\ell}}{2(p-\ell)}
+
\tfrac{(p-\ell-i/2)b^{p-\ell}}{p-\ell}, \quad
a^{i} b ^{p-\ell - i} \leq 
\tfrac{ia^{p-\ell}}{p-\ell}
+
\tfrac{(p-\ell-i)b^{p-\ell}}{p-\ell}, \quad
a, b \geq 1.
\end{aligned}
\end{equation}
Putting the estimate \eqref{equation:rough-estimate-II2-revision-paper3} into $\mathbb{I}_{2}$ leads to,
for some constant $C = C(a_{1}, a_{2},C_{f}, \vartheta, \gamma, p) > 0$,
\begin{equation} 
\begin{aligned}
\mathbb{I}_{2}
\leq
Ch
\sum_{\ell=1}^{p}
d^{\ell}
(1 + \|\mathscr{P}(Y_{n}) \|^{2})^{p-\ell} 
+ Chd^{p}.
\end{aligned}
\end{equation}
}
{\color{black}
In addition,
using the Young inequality yields, 
}
for $\ell \in [1,p] \cap \mathbb{N}$ and  a sufficiently small constant 
{\color{black}
$\epsilon_{2} \in (0, a_{1}/4p)$,
}
\begin{equation}
Ch
d^{\ell}
(1 + \|\mathscr{P}(Y_{n}) \|^{2})^{p-\ell}
\leq
\epsilon_{2}
h
(
1 + \|\mathscr{P}(Y_{n}) \|^{2}
)^{p}
+ 
C_{\epsilon_{2}}
d^{p}
h.
\end{equation}
Consequently, we derive the final estimate of $\mathbb{I}_{2}$ as below,
\begin{equation} 
\begin{aligned}
\mathbb{I}_{2}
&\leq  p\epsilon_{2}
h (1 + \|\mathscr{P}(Y_{n}) \|^{2})^{p} 
+ C_{\epsilon_{2}}d^{p}h,
\end{aligned}
\end{equation}
{\color{black}
where $C_{\epsilon_{2}} = C(a_{1}, a_{2},C_{f}, \vartheta, \gamma, p) >0 $.
}

\noindent \textbf{Combining the estimates of $\mathbb{I}_{1}$ and $\mathbb{I}_{2}$:}

Plugging the estimates of $\mathbb{I}_{1}$ and $\mathbb{I}_{2}$ and taking expectations on the both sides yield
{\color{black}
\begin{equation} 
\begin{aligned}
\mathbb{E}
\left[ 
(1+\|Y_{n+1} \|^{2})^{p}  
\right] 
&\leq 
\big[
1-a_{1}h
+p
(
\epsilon_{1}
+\epsilon_{2}
)h
\big]
\mathbb{E}
\left[
(1 + \|\mathscr{P}(Y_{n}) \|^{2})^{p}
\right]
+ C_{\epsilon_{1},\epsilon_{2}}d^{p}h \\
&
\leq
\left(
1-
\tfrac{a_{1}}{2}
h
\right)
\mathbb{E}
\left[
(1 + \|\mathscr{P}(Y_{n}) \|^{2})^{p}
\right]
+ C_{\epsilon_{1},\epsilon_{2}}d^{p}h,
\end{aligned}
\end{equation}
}
{\color{black}
where $\epsilon_{1} + \epsilon_{2} \in (0, a_{1}/2p)$.
Recalling
$h\in (0, \min\{1/2a_{1}, 2a_{1}/(a_{1}+2C^{2}_{f}), 1\} )$ and using
Lemma \ref{lemma: useful estimate of PLMC algorithm-paper3} we show,
for $C_{\epsilon_{1}, \epsilon_{2}} = C(a_{1}, a_{2},C_{f}, \vartheta, \gamma, p) >0 $,
}
{\color{black}
\begin{equation} 
\begin{aligned}
\mathbb{E}
\left[ 
(1+\|Y_{n+1} \|^{2})^{p}  
\right] 
&\leq 
\left(
1-
\tfrac{a_{1}}{2}
h
\right)
\mathbb{E}
\left[
(1 + \|{Y}_{n} \|^{2})^{p}
\right]
+ C_{\epsilon_{1},\epsilon_{2}}d^{p}h \\
& = 
\left(
1-
\tfrac{a_{1}}{2}
h
\right)^{n+1}
\mathbb{E}\left[(1 + \|{x}_{0} \|^{2})^{p}\right] 
+ \sum_{i=0}^{n}
\left(
1-
\dfrac{a_{1}}{2}
h
\right)^{i}
C_{\epsilon_{1},\epsilon_{2}}d^{p}h \\
& \leq e^{-\tfrac{a_{1}}{2} t_{n+1}}
\mathbb{E}\left[
(1 + \|{x}_{0} \|^{2})^{p}
\right] 
+ 
\tfrac{2C_{\epsilon_{1},\epsilon_{2}}}{a_{1}}
d^{p},
\end{aligned}
\end{equation}
}
where we have used the fact that for any $x>0$, $1-x \leq e^{-x}$.
The proof is completed.
\end{proof}

\section{Kolmogorov equation and regularization estimates}
\label{section:Kolmogorov-equation-and-regularization-estimates}
To carry out the weak error analysis, 
for $\phi \in C_{b}(\mathbb{R}^{d})$ we introduce the function $u(\cdot,\cdot): [0, \infty) \times \mathbb{R}^{d} \rightarrow \mathbb{R}$ defined by
\begin{equation} \label{equation:def-of-u-paper3}
    u(t, x) :=  \mathbb{E}\left[\phi\left(X^{x}_{t} \right) \right],
\end{equation}
and 
{\color{black}
consider the  Kolmogorov equation associated with SDE \eqref{eq:langevin-SODE}:
\begin{equation}
\label{equation:kolmogorov-equation-paper3}
\begin{split}
\left\{
\begin{array}{ll}
\partial_{t} u(t,x) 
= Du(t,x)f(x) 
+  \sum_{j=1}^{d}
D^{2}u(t,x) \big(e_{j} , e_{j} \big), 
\quad t>0,\ x \in \mathbb{R}^{d},
    \\
  u(0, x) = \phi(x), \quad x \in \mathbb{R}^{d},
 \end{array}\right.
 \end{split}
\end{equation}
}
where $\{ e_{j}\}_{j\in \{1, \cdots,d \} }$ is denoted as the orthonormal basis of $\mathbb{R}^{d}$.
{\color{black}
Moreover, we recall the definition of the classical solution to \eqref{equation:kolmogorov-equation-paper3} (see \cite[Definition 1.6.1]{cerrai2001second} for details) as follows.
}
{\color{black}
\begin{definition}
(Classical solutions)
A function $u:[0,+\infty) \times \mathbb{R}^d \rightarrow \mathbb{R}$ is called to be a classical solution of the Kolmogorov equation \eqref{equation:kolmogorov-equation-paper3} if
\begin{enumerate}
\item[(i)] $u(\cdot, \cdot)$ is continuous on $[0,+\infty) \times \mathbb{R}^d$,

\item[(ii)] for any $t>0, u(t, \cdot) \in C_b^2\left(\mathbb{R}^d\right)$,

\item[(iii)]
for any $x \in \mathbb{R}^d$, $u(\cdot, x):(0,+\infty) \rightarrow \mathbb{R}$ is continuously differentiable with continuous derivative $\partial_{t}u(t,x)$,

\item[(iv)]
$\partial_{t}u(t,x)$, $Du(t,x)$ and $D^{2}u(t,x)$ are continuous from $(0,+\infty) \times \mathbb{R}^d$ into $\mathbb{R}$, $\mathbb{R}^{d}$ and $\mathbb{R}^{d \times d}$, respectively.

\item[(v)] 
$u(\cdot, \cdot)$ verifies the problem \eqref{equation:kolmogorov-equation-paper3}.
\end{enumerate}
\end{definition}
}
%
{\color{black}
Owing to \cite[Theorem 1.6.2]{cerrai2001second},
$u(t,x)$ is the unique
 classical
solution of the Kolmogorov equation \eqref{equation:kolmogorov-equation-paper3} under previous assumptions.
In the following, we attempt to derive 
more delicate estimates of $u$ and its derivatives,
which are essential for the subsequent error analysis,
are absent in \cite{cerrai2001second}.
Before proceeding further,
let us
revisit the mean-square differentiability of random functions,
quoted from \cite{10.1093/imanum/drad083,pang2024linear}.
}
\begin{definition}(Mean-square differentiable)
Let $\Psi: \Omega \times \mathbb{R}^{d} \rightarrow \mathbb{R}$ and $\psi_{i}: \Omega \times \mathbb{R}^{d} \rightarrow \mathbb{R}$ be random functions satisfying
\begin{equation}
\lim _{\tau_{1} \rightarrow 0} 
\mathbb{E}
\left[
\left|
\frac{1}{\tau_{1}}
\left[
\Psi\left(x+\tau_{1} e_i\right)-\Psi(x)
\right]
-\psi_i(x)
\right|^2
\right]=0, 
\quad \forall i \in\{1,2, \cdots, d\},
\end{equation}
where $e_{i}$ is the  unit vector in $\mathbb{R}^{d}$ with the $i$-th element being  $1$.  Then $\Psi$ is called to be mean-square differentiable,  with $\psi = (\psi_{1}, \dots, \psi_{d})$  being the derivative (in the mean-square differentiable sense) of $\Psi$ at $x$.
Also, we denote $\mathcal{D}_{(i)} \Psi = \psi_{i}$  and  $\mathcal{D} \Psi(x) =\psi $.
\end{definition}
The above definition can be generalized to vector-valued functions in a component-wise manner.
As a result,
for every $t \in [0, \infty)$, we take the function $X^{(\cdot)}_t: \mathbb{R}^d\to \mathbb{R}^d$, and write its derivative as $\mathcal{D}X^{x}_{t}\in L(\mathbb{R}^d,\mathbb{R}^d)$, 
$\mathcal{D}^{2}X^{x}_{t}\in L(\mathbb{R}^d, L(\mathbb{R}^d,\mathbb{R}^d))$.
The following lemma states some a priori estimates of the mean-square derivative of solutions $\{X^{x}_{t}\}_{t\geq 0}$.

\begin{lemma} \label{lemma:differentiability-of-solutions-paper3}
Let Assumptions \ref{assumption:globally-polynomial-growth-condition-paper3},  \ref{assumption:contractivity-at-infinity-condition-paper3} and  \ref{assumption:coercivity-condition-of-the-drift-paper3} hold. Then, for all $ t \in [0,\infty)$ and $x, v_{1}, v_{2} \in \mathbb{R}^{d}$,
\begin{align}
\left\| \mathcal{D}X^{x}_{t} v_{1}\right\| 
& \leq e^{Lt}  \left\|  v_{1}\right\|, 
\\
%
{\color{black}
\|
\mathcal{D}^{2}X^{x}_{t} (v_{1}, v_{2})
\|_{L^{2} (\Omega, \mathbb{R}^{d})}
\leq 
C e^{(3L+1/2)t}
}
&
{\color{black}
\left[
\textbf{1}_{\gamma \in [1,2]}
+
\textbf{1}_{\gamma \in (2, \infty)} 
(d^{\gamma/2-1}
+
\|x\|^{\gamma-2})
\right]
\left\|
v_1 
\right\|
\cdot
\left\|
v_2
\right\|
}, 
\end{align}
where
$L>0$ comes from \eqref{eq:remark2.3-paper3}.

\end{lemma}
The proof of Lemma \ref{lemma:differentiability-of-solutions-paper3} is 
{\color{black}
postponed
}
to Appendix \ref{proof:lemma:differentiability-of-solutions-paper3}.
We remark that
{\color{black}
in existing results
}
with the strongly-convex condition \eqref{equation:strongly-convex-u},  one can prove the mean-square derivatives of solutions $\{X^{x}_{t} \}_{t\geq 0}$ 
{\color{black}
decrease
}
exponentially in time $t$ and use the chain rule to obtain 
{\color{black}
the regularity properties}  of $u(t,\cdot)$ and its derivatives for some smooth enough test functions (see \cite{pang2024linear}).
However, such arguments do not work in our setting. On the one hand, as shown by Lemma \ref{lemma:differentiability-of-solutions-paper3}, the  derivatives of  $\{X^{x}_{t} \}_{t\geq 0}$ increase exponentially in time. 
{\color{black}
On the other hand, 
}
the chain rule breaks down for non-smooth test functions $\phi \in C_{b}(\mathbb{R}^{d})$.
Here we resort to
the Bismut-Elworthy-Li formula (see \cite{cerrai2001second}). 
\begin{lemma} \label{lemma:Bismut-Elworthy-Li-formula-paper3}
(Bismut-Elworthy-Li formula \cite{cerrai2001second,elworthy1994formulae})
Let $\{X^{x}_{t}\}_{t\geq 0}$  be the solutions of the Langevin SDE \eqref{eq:langevin-SODE} with the initial state $X^{x}_{0}=x$. If $\{X^{x}_{t}\}_{t\geq 0}$ is mean-square differentiable,
then for any $\varphi \in C_{b}(\mathbb{R}^{d})$, $t > 0$ and $x, v \in \mathbb{R}^{d}$, 
\begin{equation}
\left\langle
D\big(
P_{t}\varphi(x)
\big),
v
\right\rangle
=
\dfrac{1}{\sqrt{2}t} 
\mathbb{E} 
\left[
\int_{0}^{t} \big\langle 
\mathcal{D}X^{x}_{s} v, \mathrm{d} W_{s} 
\big\rangle 
\varphi\left(X^{x}_{t} \right)  
\right],
\end{equation}
where $P_{t}(\cdot)$ defines a transition semigroup of the corresponding SDEs \eqref{eq:langevin-SODE} as $P_{t}\varphi(x) = \mathbb{E}[\varphi(X^{x}_{t})]$.
\end{lemma}
Recalling $P_{t}\phi(x) = u(t,x)$ by \eqref{equation:def-of-u-paper3},
the Bismut-Elworthy-Li formula paves the way  to presenting an expression for the derivatives of $u(t,\cdot)$ where it only  requires $\phi \in C_{b}(\mathbb{R}^{d})$.
Combining Lemma \ref{lemma:differentiability-of-solutions-paper3} with the Bismut-Elworthy-Li formula, 
we are now in the position to obtain some regularization estimates with regard to the derivatives of $u(t,\cdot)$, $t>0$ .
\begin{theorem} \label{theorem:regular-estimate-of-u-and-its-derivatives-paper3}
If Assumptions \ref{assumption:globally-polynomial-growth-condition-paper3},  \ref{assumption:contractivity-at-infinity-condition-paper3} and  \ref{assumption:coercivity-condition-of-the-drift-paper3} hold, then the function $u(t,\cdot) \in C^{2}_{b}(\mathbb{R}^{d})$.
Moreover,
{\color{black}
for $ t \in (0,1]$,
}
\begin{equation}\label{equation:Du-in-theorem-paper3}
\begin{aligned}
\left|
Du(t,x) v_{1} 
\right|
\leq 
{\color{black}
C
\dfrac{\|\phi\|_{0}}{\sqrt{t}} 
}
\|v_{1} \|, 
\quad \forall x, v_{1} \in \mathbb{R}^{d}, 
\end{aligned}
\end{equation}
and
{\color{black}
\begin{equation}\label{equation:D2u-in-theorem-paper3}
\begin{aligned}
\left|
D^{2}u(t,x)(v_{1}, v_{2}) 
\right|
\leq
C
\dfrac{\|\phi\|_{0}}{t} 
\left[
\textbf{1}_{\gamma \in [1,2]}
+
\textbf{1}_{\gamma \in (2, \infty)} 
(d^{\gamma/2-1}+\|x\|^{\gamma-2})
\right] 
\|v_{1}\| \cdot \|v_{2}\|,
\quad
\forall
x, v_{1}, v_{2} \in \mathbb{R}^{d}.
\end{aligned}
\end{equation}
}
%
{\color{black}
For $t\in (1,\infty)$, 
}
{\color{black}
\begin{equation} \label{equation:Du-for-t>1-paper3}
\begin{aligned}
\big| 
Du(t,x) v_{1} 
\big|
\leq 
C_{\star}
\|\phi\|_{0}
e^{-c_{\star}(t-1)} 
\left(
d^{1/2} + \|x\|
\right)
\|v_{1}\|, 
\quad \forall x, v_{1} \in \mathbb{R}^{d},
\end{aligned}
\end{equation}
}
and
{\color{black}
\begin{equation}\label{equation:D2u-for-t>1-paper3}
\begin{aligned}
%
&\left|
D^{2}u(t,x)(v_{1}, v_{2}) 
\right|
\leq 
C_{\star}
\|\phi\|_{0}
e^{-c_{\star}(t-1)} 
(d^{1/2} + \|x\|)
\left[
\textbf{1}_{\gamma \in [1,2]}
+
\textbf{1}_{\gamma \in (2, \infty)} 
(d^{\gamma/2-1}+\|x\|^{\gamma-2})
\right] 
\|v_{1}\| \cdot \|v_{2}\|, \\
& \hspace{32em}
\forall x, v_{1}, v_{2} \in \mathbb{R}^{d}.  \\
\end{aligned}
\end{equation}
}
\end{theorem}

\begin{proof}[Proof of Theorem \ref{theorem:regular-estimate-of-u-and-its-derivatives-paper3}]
    The Bismut-Elworthy-Li formula  (see Lemma \ref{lemma:Bismut-Elworthy-Li-formula-paper3} or \cite{cerrai2001second,elworthy1994formulae}) states that for some function $\Phi: \mathbb{R}^{d}\rightarrow \mathbb{R}$ belonging to $C_{b}(\mathbb{R}^{d})$, there exists $\mathcal{K}(\Phi, x)>0$, which depends on $\Phi$ and $x$, such that,
    \begin{equation} \label{equation:definition-of-Phi-paper3}
    |\Phi(x)| \leq \mathcal{K}(\Phi, x),\quad \forall x\in \mathbb{R}^{d},
    \end{equation}
 then we can calculate the first and the second derivative of 
\begin{equation}
{\color{black}
V(t,x)
}
:= 
\mathbb{E}[\Phi(X^{x}_{t})] 
\end{equation}
with respect to $x$.
{\color{black}
Recalling \eqref{equation:notation-of-the-derivatives-paper3},
we use the same notations $\eta^{v_{1}}(t,x)$ and $\xi^{v_{1}, v_{2}}(t,x)$ to denote the first and the second derivatives of $X^{x}_{t}$ with respect to $x$ for $t>0$, respectively. 
}
Indeed, we have
\begin{equation} \label{equation:Bismut-Elworthy-Li-formula-of-the-first-derivative-of-U}
\begin{aligned}
D
{\color{black}
V(t,x)
}
v_{1}
&=\dfrac{1}{\sqrt{2}t} 
\mathbb{E} 
\left[
\int_{0}^{t} 
\left\langle
\eta^{v_{1}}(s,x), \dd W_{s} 
\right\rangle 
\Phi\left(
X^{x}_{t} 
\right) 
\right]. \\
\end{aligned}
\end{equation}
{\color{black}
By \eqref{equation:definition-of-Phi-paper3}, Lemma \ref{lemma:differentiability-of-solutions-paper3}, the H\"older inequality and the It\^o isometry, we obtain that, for any $ v_{1} \in \mathbb{R}^{d}$,
\begin{equation}
\label{equation:1st-derivative-of-U-paper3}
\begin{aligned}
\big|
D
V(t,x)
v_{1} 
\big|
&  \leq 
\dfrac{1}{\sqrt{2}t} 
\left\|
\int_{0}^{t} 
\left\langle
\eta^{v_{1}}(s,x), \dd W_{s} 
\right\rangle 
\Phi\left(
X^{x}_{t} 
\right) 
\right\|_{L^{1}(\Omega, \mathbb{R})}  \\
& \leq 
\dfrac{
\| 
\Phi\left(
X^{x}_{t} 
\right)
\|_{L^{2}(\Omega, \mathbb{R})}
}{\sqrt{2}t} 
\Big\|
\int_{0}^{t } 
\left\langle 
\eta^{v_{1}}(s,x), \dd W_{s} 
\right\rangle 
\Big\|_{L^{2}(\Omega,\mathbb{R})} \\
&
 \leq
\dfrac{
\| \mathcal{K}(\Phi, X^{x}_{t})\|_{L^{2}(\Omega, \mathbb{R})}
}{\sqrt{2}t} 
\left(
\int_{0}^{t }
\left\|
\eta^{v_{1}}(s,x) 
\right\|^{2}_{L^{2}(\Omega,\mathbb{R}^{d})} 
\dd s
\right)^{1/2}\\
& \leq
\dfrac{
\| \mathcal{K}(\Phi, X^{x}_{t})\|_{L^{2}(\Omega, \mathbb{R})}
}{\sqrt{2}t} 
\left(
\int_{0}^{t } 
e^{2Ls} \dd s
\right)^{1/2} 
\| v_{1}\| \\
& 
= 
\dfrac{
(
 e^{2Lt}-1 
)^{1/2} 
}{2L^{1/2}t} 
\| 
\mathcal{K}(\Phi, X^{x}_{t})
\|_{L^{2}(\Omega, \mathbb{R})}
\| v_{1}\|.
\end{aligned}
\end{equation}
}
{\color{black}
As a consequence of \eqref{equation:1st-derivative-of-U-paper3},
one can derive the strong Feller property of $u(t, \cdot)$ corresponding to the Langevin SDEs \eqref{eq:langevin-SODE}.
Indeed, one can choose 
$\Phi = \phi \in C_{b}(\mathbb{R}^{d})$ in \eqref{equation:1st-derivative-of-U-paper3} and thus $\mathcal{K}(\Phi,x) = \| \phi\|_{0}$. Therefore one deduces
}
\begin{equation} \label{eq:strong-feller-property-paper3}
    \left|
    u(t,x_{1}) - u(t,x_{2})
    \right|
    \leq 
    {\color{black}
    C(t) \|\phi\|_{0} }
    \cdot
    \|x_{1}-x_{2} \|,
    \quad
    \forall x_{1}, x_{2} \in \mathbb{R}^{d},
    \quad t>0,
\end{equation}
{\color{black}
with 
$C(t)=(
 e^{2Lt}-1 
)^{1/2}/2L^{1/2}t$.
Such a constant $C(t)=\mathcal{O}(1/\sqrt{t})$ 
when $t \rightarrow 0$ and increases exponentially with respect to $t$ when $t \rightarrow \infty$.
}
{\color{black}
This implies that given any function $\phi \in C_{b}(\mathbb{R}^{d})$, $u(t, \cdot) = P_{t}\phi(\cdot)$ is Lipschitz continuous, leading to the strong Feller property of $P_{t}(\cdot)$, for any $ t > 0$.
}

Going back to the proof,
the Markov property of SDE \eqref{eq:langevin-SODE} 
{\color{black}
yields
}
\begin{equation}
{\color{black}
V(t,x)
}
=
\mathbb{E}
\left[
{\color{black}
V
}
\left(
t/2,X^{x}_{t/2 } 
\right) 
\right]
{\color{black}
=
P_{t/2}
\big(
V
\left(
t/2, \cdot 
\right)
\big)
\left(
X^{x}_{t/2 }
\right)
}
,
\end{equation}
which leads to the following expression with respect to $D{\color{black}
V
}(t,\cdot)$,
\begin{equation} \label{equation:first-derivative-of-U-version-2-paper3}
D{\color{black}
V
}(t,x) v_{1}
=
\dfrac{2}{\sqrt{2}t} 
\mathbb{E} 
\left[
\int_{0}^{\frac{t}{2} } 
\left\langle 
\eta^{v_{1}}(s,x), 
\dd W_{s} 
\right\rangle  
{\color{black}
V
}
\left(
t/2,X^{x}_{t/2 } 
\right)
\right].
\end{equation}
{\color{black}
By \eqref{equation:1st-derivative-of-U-paper3} and Lemma \ref{lemma:differentiability-of-solutions-paper3},
one notes that 
both functions 
${\color{black}
V
}(t, \cdot)$ and $X^{(\cdot)}_{t}$ are continuously differentiable,
implying that  the function
$
V(
t/2,X^{x}_{t/2 } 
)$, $t>0$, is also continuously differentiable. 
Therefore, we employ the chain rule to obtain first derivative of
${\color{black}
V
}(
t/2,X^{x}_{t/2 } 
)$ as
\begin{equation}
D
\left(
{\color{black}
V
}(
t/2,X^{x}_{t/2 } 
)
\right) 
{
\color{black}v_{2}
}
=
D
{\color{black}
V
}(
t/2,X^{x}_{t/2 } 
)
\eta^{v_{2}}(t/2,x).
\end{equation}
}
Accordingly, one can compute the second derivative of ${\color{black}
V
}(t,\cdot)$ as follows:
\begin{equation}
\begin{aligned}
D^{2}
{\color{black}
V
}(t,x) (v_{1}, v_{2})
&=  
\dfrac{2}{\sqrt{2}t} 
\mathbb{E} 
\left[
\int_{0}^{\frac{t}{2} } 
\left\langle 
\xi^{v_{1}, v_{2}}(s,x), \dd W_{s} 
\right\rangle  
{\color{black}
V
}\left(
t/2,X^{x}_{t/2 } 
\right) 
\right] \\
& \quad 
+ \dfrac{2}{\sqrt{2}t} 
\mathbb{E} 
\left[
\int_{0}^{\frac{t}{2} } 
\left\langle 
\eta^{v_{1}}(s,x), \dd W_{s} 
\right\rangle  
D
{\color{black}
V
}\left(
t/2,X^{x}_{t/2 } 
\right)
\eta^{v_{2}}(t/2,x) 
\right].
\end{aligned}
\end{equation}
{\color{black}
Due to \eqref{equation:1st-derivative-of-U-paper3}, Lemma \ref{lemma:differentiability-of-solutions-paper3}, 
the H\"older inequality and the It\^o isometry, one has
}
{\color{black}
\begin{equation} 
\label{equation:2nd-derivative-of-U-paper3}
\begin{aligned}
&\left| 
D^{2}V(t,x) 
(v_{1}, v_{2})
\right|  \\
&\leq 
\dfrac{2}{\sqrt{2}t} 
\Big\|
V\left(
t/2, X^{x}_{t/2 } 
\right) 
\Big\|_{L^{2}(\Omega,\mathbb{R})}
\Big\|
\int_{0}^{\frac{t}{2} } 
\left\langle 
\xi^{
v_{1}, v_{2}
}(s,x), 
\dd W_{s} 
\right\rangle 
\Big\|_{L^{2}(\Omega,\mathbb{R})} 
\\
& \quad + 
\dfrac{2}{\sqrt{2}t}  
\left\|
DV\left(
t/2, X^{x}_{t/2 } 
\right)
\eta^{
v_{2}
}(t/2,x) 
\right\|_{L^{2}(\Omega,\mathbb{R})}
\Big\|
\int_{0}^{\frac{t}{2} } 
\left\langle 
\eta^{v_{1}}(s,x), 
\dd W_{s} 
\right\rangle 
\Big\|_{L^{2}(\Omega,\mathbb{R})}  \\
&
\leq
C
\dfrac{
\| 
\mathcal{K}(\Phi, X^{x}_{t/2 }) 
\|_{L^{2}(\Omega, \mathbb{R})}
}{t}
\left(
\int_{0}^{\frac{t}{2} } 
\left\| 
\xi^{
v_{1}, v_{2}
}(s,x)
\right\|_{L^{2}(\Omega,\mathbb{R}^{d})}^{2}
\dd s
\right)^{1/2} \\
& \quad +
C
\dfrac{
\mathbb{E}
\bigg[
\Big(
\mathbb{E}
\Big[
\big| 
\mathcal{K}\big(
\Phi,X^{y}_{t/2 }
\big) 
\big|^2
\Big] 
\Big|_{y = X^{x}_{t/2 }}
\Big)^{1/2}
\bigg]
}{t^{2}}
\big(
 e^{Lt}-1 
\big)^{1/2}
\left\| 
\eta^{v_{2}}(t/2,x)
\right\|
\left(
\int_{0}^{\frac{t}{2} } 
\left\| 
\eta^{v_{1}}(s,x)
\right\|^{2}
\dd s
\right)^{1/2}
\\
& \leq
C\dfrac{
\| 
\mathcal{K}(\Phi, X^{x}_{t/2 }) 
\|_{L^{2}(\Omega, \mathbb{R})}
}{t}  
\big(
 e^{(3L+1/2)t}-1 
\big)^{1/2} 
\left[
\textbf{1}_{\gamma \in [1,2]}
+
\textbf{1}_{\gamma \in (2, \infty)} 
(d^{\gamma/2 - 1}+\|x\|^{\gamma-2})
\right] 
\|v_{1}\| \cdot \|v_{2}\|
\\
& \quad +
C\dfrac{
\mathbb{E}
\bigg[
\Big(
\mathbb{E}
\Big[
\big| 
\mathcal{K}\big(
\Phi,X^{y}_{t/2 }
\big) 
\big|^2
\Big] 
\Big|_{y = X^{x}_{t/2 }}
\Big)^{1/2}
\bigg]
}{t^{2}} 
\big(
 e^{Lt}-1 
\big)
e^{Lt/2}
\|v_{1}\| \cdot \|v_{2}\|. \\
\end{aligned}
\end{equation}
}
The proof will be separated into two cases in accordance with the range of time $t>0$.

\noindent
\textbf{Case I: $t \in (0,1]$}

Here we choose the function $\Phi = \phi$ with $\mathcal{K}(\Phi,x) = \| \phi\|_{0}$.
Then 
it follows immediately from \eqref{equation:1st-derivative-of-U-paper3}, \eqref{equation:2nd-derivative-of-U-paper3} and the H\"older inequality that
\begin{equation}\label{equation:Du-in-proof-for-t<1-paper3}
\begin{aligned}
\left|
Du(t,x) v_{1} 
\right|
\leq 
{\color{black}
C\dfrac{\|\phi\|_{0}}{\sqrt{t}}
}
\|v_{1} \|, 
\quad \forall x, v_{1} \in \mathbb{R}^{d}, 
\end{aligned}
\end{equation}
and
{\color{black}
\begin{equation}
\begin{aligned}
\left|
D^{2}u(t,x)(v_{1}, v_{2}) 
\right|
\leq
C
\dfrac{\|\phi\|_{0}}{t} 
\left[
\textbf{1}_{\gamma \in [1,2]}
+
\textbf{1}_{\gamma \in (2, \infty)} 
(
d^{\gamma/2-1}+\|x\|^{\gamma-2}
)
\right] 
\|v_{1}\| \cdot \|v_{2}\|,
\quad
x, v_{1}, v_{2} \in \mathbb{R}^{d},
\end{aligned}
\end{equation}
}
{\color{black}
for some constant $C$ depending on $L$,
where we have also used
the fact that $e^{ct}-1 \leq 2ct$, $\forall c > 0$, and $1/\sqrt{t} \leq 1/t$, $t \in (0,1]$.
}

\noindent
\textbf{Case II: $t \in (1,\infty)$}

Recalling \eqref{equation:def-of-u-paper3}, when $t>1$, the Markov property immediately implies that 
\begin{equation}
u(t,x)=\mathbb{E}[u(t-1, X^{x}_{1})],
\end{equation}
and from \eqref{equation: expoenetially decrease of SDE to the invariant measure} we arrive at
\begin{equation}
\left|
u(t-1,x)
-
\int_{
\mathbb{R}^d
} 
\phi(x) \pi(\mathrm{d} x)
\right| 
\leq
{\color{black}
C_{\star}
\|\phi\|_{0}
e^{-c_{\star} (t-1)}
\left(
1+
\|x\|
\right),
\quad
\forall
x \in \mathbb{R}^{d}.
}
\end{equation}
{\color{black}
Inspired by \cite{brehier2014approximation},
}
here we choose
\begin{equation}
{\color{black}
\Phi_{t}(x)
}:=
u(t-1,x)- 
\int_{\mathbb{R}^d} \phi(x) \pi(\mathrm{d}  x)
\end{equation}
with 
{\color{black}
\begin{equation}\label{eq:constant-for-t>1-paper3}
\mathcal{K}(\Phi_{t},x) =
C_{\star} 
\|\phi\|_{0}
e^{-c_{\star}(t-1)} 
(1+\|x\|).
\end{equation}
}
Then, by the Markov property again, one has
\begin{equation}
\begin{aligned}
{\color{black}
\mathbb{E}
\left[
\Phi_{t}(X^{x}_{1}) 
\right]
}
&=
\mathbb{E}
\left[
u(t-1,X^{x}_{1}) 
\right] 
- 
\int_{\mathbb{R}^d} \phi(x) 
\pi(\mathrm{d}  x) \\
& = 
u(t,x)
-
\int_{
\mathbb{R}^d
} 
\phi(x) 
\pi(\mathrm{d}  x) \color{black},
\end{aligned}
\end{equation}
leading to 
\begin{equation}
u(t,x)= 
{\color{black}
\mathbb{E}
[
\Phi_{t}(X^{x}_{1})
]
}
+ 
\int_{\mathbb{R}^d} 
\phi(x) \pi(\dd x)   
=
{\color{black}
V(1,x) 
}
+
\int_{\mathbb{R}^d} 
\phi(x) \pi(\dd x).
\end{equation}
{\color{black}
Equipped with the estimates \eqref{equation:1st-derivative-of-U-paper3}, \eqref{equation:2nd-derivative-of-U-paper3} at $t=1$, \eqref{eq:constant-for-t>1-paper3}  and Lemma \ref{lemma:Uniform-moment-bounds-of-the-Langevin-SDE-paper3},
}
{\color{black}
we 
attain
}
{\color{black}
\begin{equation}  \label{equation:D2u-in-proof-for-t>1-paper3}
\begin{aligned}
\big| 
Du(t,x) v_{1} 
\big|
\leq 
C_{\star}
\|\phi\|_{0}
e^{-c_{\star}(t-1)} 
\left(
d^{1/2} + \|x\|
\right)
\|v_{1}\|, 
\quad \forall x, v_{1} \in \mathbb{R}^{d},
\end{aligned}
\end{equation}
}
and
{\color{black}
\begin{equation}\label{equation:D2u-in-proof-for-t>1-paper3}
\begin{aligned}
&\left|
D^{2}u(t,x)(v_{1}, v_{2}) 
\right|
\leq 
C_{\star}
\|\phi\|_{0}
e^{-c_{\star}(t-1)} 
(d^{1/2} + \|x\|)
\left[
\textbf{1}_{\gamma \in [1,2]}
+
\textbf{1}_{\gamma \in (2, \infty)} 
(d^{\gamma/2-1}+\|x\|^{\gamma-2})
\right] 
\|v_{1}\| \cdot \|v_{2}\|, \\
& \hspace{32em}
\forall x, v_{1}, v_{2} \in \mathbb{R}^{d}.  \\
\end{aligned}
\end{equation}
}
%
Thus, the proof is completed.


\end{proof}


\section{Proof of Theorem \ref{theorem:main-result-paper3}: time-independent weak error analysis}
\label{section:Time-independent-error-analysis}
Thanks to \eqref{equation:convergence-sode-paper3} and by setting $T=Nh$,
the total variation distance between the law of the PLMC algorithm and the target distribution  induced by \eqref{eq:langevin-SODE} boils down to the weak error analysis of the PLMC scheme \eqref{equation:numerical-scheme-PLMC-algorithm-paper3} for the Langevin SDEs \eqref{eq:langevin-SODE}:
\begin{equation}\label{equation:error-decomposition-tv-paper3}
\begin{aligned}
\left\|
\Pi(Y^{x_{0}}_{N}) -   \pi
\right\|_{\text{TV}} 
&\leq \big\|
\Pi(Y^{x_{0}}_{N}) -  \Pi(X^{x_{0}}_{T})  
\big\|_{\text{TV}}
+ \left\|
\Pi(X^{x_{0}}_{T}) -  \pi
\right\|_{\text{TV}}  \\
& =  \sup_{
\phi \in C_{b}(\mathbb{R}^{d}), 
\ \|\phi\|_{0} \leq 1 
} 
\big|
\mathbb{E}\left[ 
\phi(X^{x_{0}}_{T}) 
\right]  
- \mathbb{E}\left[
\phi(Y^{x_{0}}_{N}) 
\right] 
\big| 
+ \left\|
\Pi(X^{x_{0}}_{T}) -   \pi
\right\|_{\text{TV}}  \\ 
&\leq 
 \sup_{
\phi \in C_{b}(\mathbb{R}^{d}),
\ \|\phi\|_{0} \leq 1 
} 
\big|
\mathbb{E}\left[
\phi(X^{x_{0}}_{T}) 
\right]  
- 
\mathbb{E}\left[
\phi(Y^{x_{0}}_{N}) 
\right] 
\big|
+
{\color{black}
C_{\star} \|\phi\|_{0}
e^{- c_{\star} Nh}
\left(
1+
\mathbb{E}\left[
\|x_{0}\|
\right]
\right).
}
\end{aligned}
\end{equation}
{\color{black}
To carry out the weak error analysis 
based on the Kolmogorov equations, 
it is convenient to introduce continuous-time extensions} of the PLMC algorithm \eqref{equation:numerical-scheme-PLMC-algorithm-paper3} as follows:
for $s\in [t_{n}, t_{n+1}]$, $n \in \{0,1,\dots,N-1 \}$, 
\begin{equation} \label{introduction:continuous-time-version-of-PLMC algorithm-paper3}
\left\{
\begin{array}{l}
\mathbb{Y}^{n}(s)=\mathbb{Y}^{n}(t_{n}) 
+  f\left(\mathbb{Y}^{n}(t_{n})\right) (s-t_{n}) + \sqrt{2} (W_{s}-W_{t_{n}}) \\
\mathbb{Y}^{n}(t_{n}) = \mathscr{P}(Y_{n}).
\end{array}\right.
\end{equation}
{\color{black}
Before proceeding further,
it is important to mention that, in the case $\gamma > 1$ the continuous-time extensions $\mathbb{Y}^{n}(s)$, $s\in [t_{n}, t_{n+1}]$, $n \in \{0,1,\dots,N-1 \}$ are not continuous on the grid points, due to the projection introduced in the PLMC algorithm \eqref{equation:numerical-scheme-PLMC-algorithm-paper3}.
For the Lipschitz case ($\gamma =1$), the continuous-time extensions are continuous on the grid points and thus continuous within the whole interval $[0, T]$.
}
Next we present some 
{\color{black}
regularity estimates 
}
of the process $\{\mathbb{Y}^{n}(s) \}_{s\in [t_{n}, t_{n+1}]}$.
\begin{lemma}\label{lemma:holder-continuity-of-PLMC algorithm-paper3}
Let Assumptions \ref{assumption:globally-polynomial-growth-condition-paper3},  \ref{assumption:contractivity-at-infinity-condition-paper3} and  \ref{assumption:coercivity-condition-of-the-drift-paper3} hold. 
Let $\{\mathbb{Y}^{n}(s) \}_{s\in [t_{n}, t_{n+1}]}$ be defined as \eqref{introduction:continuous-time-version-of-PLMC algorithm-paper3}, $n \in \{0,1,\dots,N-1 \}$, $N\in \mathbb{N}$. Then, for any $ p\in [1,\infty)$,
the following estimates hold true,
\begin{equation} \label{equation: moment bounds of the continuous time version of PLMC algorithm}
\begin{aligned} 
\mathbb{E} 
\left[
\big\|
\mathbb{Y}^{n}(s)
\big\|^{2p}
\right] 
\leq 
{\color{black}
C 
\left(
d^{p}
+ \mathbb{E} 
\left[
\|X_{0}\|^{2p}
\right] 
\right).
}
\end{aligned} 
\end{equation}
and
\begin{equation} 
\begin{aligned}
\mathbb{E} 
\left[
\big\| 
\mathbb{Y}^{n}(s) 
-
\mathbb{Y}^{n}(t_{n}) 
\big\|^{2p}
\right] 
&\leq 
{\color{black}
Ch^{p} 
\left(
d^{p\gamma} 
+
\mathbb{E} 
\left[
\|X_0 \|^{2p\gamma} 
\right]
\right),
}\\
\mathbb{E} 
\left[
\left\| 
f\big(\mathbb{Y}^{n}(s) \big) 
-f\big(\mathbb{Y}^{n}(t_{n}) \big)
\right\|^{2p}
\right] 
&\leq 
{\color{black}
C
h^{p} 
\left(
d^{2p\gamma - p}
+ \mathbb{E} 
\left[
\|X_{0}\|^{4p\gamma -2p}
\right] 
\right).
}
\end{aligned}
\end{equation}
\end{lemma}
{\color{black}
The proof of Lemma \ref{lemma:holder-continuity-of-PLMC algorithm-paper3} is straightforward due to Assumption \ref{assumption:globally-polynomial-growth-condition-paper3}, Lemma \ref{lemma:Uniform-moment-bounds-of-the-PLMC-paper3} and the H\"older inequality. Similar estimates can be found in \cite[Lemma 5.6]{pang2024linear} without the dependence with respect to the dimension $d$.
As a result, we omit the proof here.
}
Moreover, we would like to present the error estimate between 
{\color{black}
$x \in \R^d$
and its projection 
$\mathscr{P}(x)$, 
}
defined by {\color{black}\eqref{equation:projection-operator-paper3}}.
\begin{lemma} \label{lemma:error-estimate-between-x-and-projected-x-paper3}
{\color{black}
Let $\gamma$ be given in Assumption \ref{assumption:globally-polynomial-growth-condition-paper3} and
}
{\color{black}
let $\mathscr{P}(\cdot)$ be defined by \eqref{equation:projection-operator-paper3}. 
}
Then we have
{\color{black}
\begin{equation}
\|x-\mathscr{P}(x)\| \leq 2 \vartheta^{-4 \gamma} d^{-2} h^2\|x\|^{4 \gamma+1}
\end{equation}
}
{\color{black}
for $\gamma > 1$
}
and 
{\color{black}
$x-\mathscr{P}(x) = 0, \forall x \in \mathbb{R}^{d}$
for $\gamma=1$.
}
\end{lemma}
{\color{black}
We point out that 
Lemma \ref{lemma:error-estimate-between-x-and-projected-x-paper3} can be proved similarly to \cite[Lemma 5.7]{pang2024linear}.
}
For completeness,
the proof of Lemma \ref{lemma:error-estimate-between-x-and-projected-x-paper3} is also shown in Appendix \ref{Proof of Lemma projected error}. 
Up to this stage, we have established sufficient machinery to obtain the uniform weak error estimate of the Langevin SDE \eqref{eq:langevin-SODE} and the PLMC scheme \eqref{equation:numerical-scheme-PLMC-algorithm-paper3} as below.
\begin{theorem} 
\label{theorem: Time-independent weak error analysis-paper3}
Let Assumptions \ref{assumption:globally-polynomial-growth-condition-paper3}, \ref{assumption:contractivity-at-infinity-condition-paper3} and \ref{assumption:coercivity-condition-of-the-drift-paper3} hold. Let $\{X^{x_{0}}_{t}\}_{t\geq 0}$ and $\{Y^{x_{0}}_{n}\}_{n \geq 0}$ be the solutions of SDE \eqref{eq:langevin-SODE} and the PLMC algorithm \eqref{equation:numerical-scheme-PLMC-algorithm-paper3} with the same initial state $X^{x_{0}}_{0} = Y^{x_{0}}_{0} = x_{0}$, respectively. Also, let 
{\color{black}
$h\in (0, \min\{1/2a_{1}, 2a_{1}/(a_{1}+2C^{2}_{f}), 1\} )$}, 
{\color{black}
where $a_{1}$ and $C_f$ are given in Assumption \ref{assumption:coercivity-condition-of-the-drift-paper3} and Lemma \ref{lemma: useful estimate of PLMC algorithm-paper3}, respectively,
}
be the uniform timestep. Then for 
{\color{black} any
test function
}
$\phi \in C_{b}(\mathbb{R}^{d})$ and any terminal time $T$ such that $T=Nh$,
\begin{equation}\label{equation:weak-error-for-super-linear-drift-paper3}
\big| 
\mathbb{E}\left[
\phi(Y^{x_{0}}_{N}) 
\right] 
- \mathbb{E}\left[
\phi(X^{x_{0}}_{T}) 
\right]
\big| 
\leq 
Cd^{\max\{3\gamma/2 , 2\gamma-1 \} } h \left| \ln{h} \right|.
\end{equation}
\end{theorem}
{\color{black}
In particular, when $\gamma=1$, i.e. when
the drift $f$ is globally Lipschitz continuous, 
}
the PLMC algorithm \eqref{equation:numerical-scheme-PLMC-algorithm-paper3} reduces to the classical LMC  \eqref{equation:numerical-scheme-euler-maruyama-paper3} 
and one obtains
\begin{equation} \label{equation:weak-error-for-Lipshitz-drift-paper3-not-optimal-paper3}
\Big| 
\mathbb{E}
\big[
\phi\big(
\widetilde{Y}^{x_{0}}_{N}
\big) 
\big] 
- 
\mathbb{E}\left[
\phi\big(
X^{x_{0}}_{T}
\big) 
\right]
\Big| 
\leq 
Cd^{3/2} h \left| \ln{h} \right|.
\end{equation}
{
\color{black}
As indicated by \eqref{equation:weak-error-for-Lipshitz-drift-paper3-not-optimal-paper3}, we derive $d^{3/2}$ dimension dependence for the LMC in the case of the Lipschitz continuous drift $f$ under the contractivity at infinity condition, which is consistent with that for the LMC algorithm using decreasing stepsizes in \cite{li2023unadjusted}.
For the case of superlinearly growing drift $f$,
the dimension dependence of the PLMC algorithm  \eqref{equation:numerical-scheme-PLMC-algorithm-paper3}
is proved to be
$d^{\max\{3\gamma/2 , 2\gamma-1 \} }$, which is new in the literature.
More precisely,
the dimension dependence is $d^{3\gamma/2}$ for the case $1\leq \gamma \leq 2$
and $d^{2\gamma -1}$  for $\gamma > 2$. Whether the obtained dimension dependence can be further improved 
turns out to be an interesting topic \cite{mou2022improved} and would be a direction for our future work.}
\begin{proof} [Proof of Theorem \ref{theorem: Time-independent weak error analysis-paper3}]
By the telescoping argument, \eqref{introduction:continuous-time-version-of-PLMC algorithm-paper3} and \eqref{equation:def-of-u-paper3}, the weak error can be decomposed as follows:
\begin{equation} \label{eq:decomposition-of-the-weak-error-paper3}
\begin{aligned}
\big| 
\mathbb{E}\left[
\phi(Y^{x_{0}}_{N}) 
\right] 
- 
\mathbb{E}\left[
\phi(X^{x_{0}}_{T}) 
\right]
\big|
&=
\big|
u(0,Y^{x_{0}}_{N}) - u(T,x_{0})
\big|\\
&=  
\left|
\sum_{n=0}^{N-1}  
\mathbb{E}\big[
u(T-t_{n+1}, Y_{n+1}) 
\big] 
- 
\mathbb{E}\big[
u(T-t_{n}, Y_{n}) 
\big] 
\right| \\
&{\color{black}
\leq
\left|
\sum_{n=0}^{N-1}  
\mathbb{E}\big[
u \big(
T-t_{n}, 
\mathscr{P}\left(Y_n\right)
\big) 
\big] 
- 
\mathbb{E}\big[
u\big(
T-t_{n}, {Y}_{n}
\big) 
\big]
\right| 
}  \\
&\quad
{\color{black}
+
\left| 
\sum_{n=0}^{N-1} 
\mathbb{E}\big[
u \big(
T-t_{n+1}, 
Y_{n+1}
\big) 
\big] 
- \mathbb{E}\big[
u\big(
T-t_{n}, 
\mathscr{P}\left(Y_n\right)
\big) 
\big] 
\right| 
}\\
&{\color{black}
=
}
\underbrace{
\left|
\sum_{n=0}^{N-1}  
\mathbb{E}\big[
u \big(
T-t_{n}, 
\mathbb{Y}^{n}(t_{n}) 
\big) 
\big] 
- 
\mathbb{E}\big[
u\big(
T-t_{n}, {Y}_{n}
\big) 
\big]
\right|
}_{\color{black}
=: J_{1}}\\
&\quad + 
\underbrace{
\left| 
\sum_{n=0}^{N-1} 
\mathbb{E}\big[
u \big(
T-t_{n+1}, 
\mathbb{Y}^{n}(t_{n+1})
\big) 
\big] 
- \mathbb{E}\big[
u\big(
T-t_{n}, 
\mathbb{Y}^{n}(t_{n}) 
\big) 
\big] 
\right|
}_{\color{black}=: J_{2}},\\
\end{aligned}
\end{equation}
where the fact was used that $\mathbb{Y}^{n}(t_{n+1})=Y_{n+1}$
{\color{black}
and $\mathbb{Y}^n\left(t_n\right)=\mathscr{P}\left(Y_n\right)$
}
due to \eqref{introduction:continuous-time-version-of-PLMC algorithm-paper3}. 
In the following, 
{\color{black}
we set $N_{1} \in \mathbb{N}$ such that 
$N_1 h\leq 1 <(N_1+1)h$.
}
First, we deal with the case $\gamma >1$.

\noindent
\textbf{Step I: $\gamma >1$}

The estimate of $J_{1}$ can be derived by Lemma \ref{lemma:Uniform-moment-bounds-of-the-PLMC-paper3}, 
{\color{black}
\eqref{equation:Du-in-theorem-paper3}, \eqref{equation:Du-for-t>1-paper3} in 
}
Theorem \ref{theorem:regular-estimate-of-u-and-its-derivatives-paper3}, Lemma \ref{lemma:error-estimate-between-x-and-projected-x-paper3} and the Taylor expansion 
as follows, 
setting $\upsilon_{1}(\bar{r}):=Y_{n} + \bar{r}\big( \mathbb{Y}^{n}(t_{n})
- Y_{n}\big)$, $\bar{r} \in [0,1]$,
\begin{equation} \label{equation:estimate-of-J1-paper3}
\begin{aligned}
J_{1} 
&\leq 
\sum_{n=0}^{N-N_{1}-1} 
\left|  
\mathbb{E}
\left[
\int_{0}^{1}
Du \big(
T-t_{n}, 
\upsilon_{1}(\bar{r}) 
\big) 
\big( \mathbb{Y}^{n}(t_{n})
- Y_{n}
\big) \dd \bar{r}
\right] 
\right|
 \\
& \quad + 
\sum_{n=N-N_{1}}^{N-1} 
\left|  
\mathbb{E}
\left[
\int_{0}^{1}
Du \big(T-t_{n}, 
\upsilon_{1}(\bar{r})
 \big) 
 \big(
 \mathbb{Y}^{n}(t_{n})
- Y_{n}
\big) 
\dd \bar{r}
\right] 
\right| \\
& \leq 
{\color{black}
C_{\star} \|\phi\|_{0}
\sum_{n=0}^{N-N_{1}-1}
e^{-c_{\star}(T-t_{n}-1)} 
\left\|
\left(
d^{1/2} + \| \upsilon_{1}(\bar{r})\|
\right)
\left\|
\mathbb{Y}^{n}(t_{n}) - {Y}_{n} 
\right\|
\right\|_{L^{1}(\Omega,\mathbb{R})}
}
\\
&\quad +
C 
\sum_{n=N-N_{1}}^{N-1} 
\left(
\dfrac{1}{\sqrt{T-t_{n}}} 
\right) 
\left\|
\mathbb{Y}^{n}(t_{n}) - {Y}_{n} 
\right\|_{L^{1}(\Omega,\mathbb{R}^{d})} \\
& 
{\color{black}
\leq
2 \vartheta^{-4 \gamma} 
C_{\star} \|\phi\|_{0}
d^{-2} h^2
\sum_{n=0}^{N-N_{1}-1}
e^{-c_{\star}(T-t_{n}-1)} 
\left\|
\left(
d^{1/2} + \| \upsilon_{1}(\bar{r})\|
\right)
\left\|
 {Y}_{n} 
\right\|^{4\gamma + 1}
\right\|_{L^{1}(\Omega,\mathbb{R})}
}
\\
& \quad
{
\color{black}
+
2 \vartheta^{-4 \gamma}  C
d^{-2} h^2
\sum_{n=N-N_{1}}^{N-1}
\left(
\dfrac{1}{\sqrt{T-t_{n}}} 
\right) 
\left\|
 {Y}_{n} 
\right\|_{L^{4\gamma+1}(\Omega,\mathbb{R}^{d})}
}
\\
& \leq 
C d^{-2}
{\color{black}
\left( 
d^{1/2}
\left\|
x_{0} 
\right\|^{4\gamma+1}_{L^{4\gamma+1 }(\Omega, \mathbb{R}^{d})} 
+
\left\|
x_{0} 
\right\|^{4\gamma+2}_{L^{4\gamma+2 }(\Omega, \mathbb{R}^{d})}
+ d^{2\gamma+1}
\right)h
}\\
& \leq 
Cd^{2\gamma - 1} h,
\end{aligned}
\end{equation}
{\color{black}
where
it is 
straightforward}
to obtain that 
\begin{equation}
\label{equation:uniform-estimate-for-sum-paper3}
 \sum_{n=0}^{N-N_{1}-1}
 {\color{black}
 e^{
 -c_{\star}(T-t_{n}-1)
 }
 }h  
 \quad
 \text{and} \quad
 \sum_{n=N-N_{1}}^{N-1} 
 \left(
 \dfrac{1}{ \sqrt{T-t_{n}}} 
 \right)h
\end{equation}
 are uniformly bounded with respect to $T$.

{\color{black}
Regarding $J_2$, 
we first use
the It\^o formula and the Kolmogorov equation \eqref{equation:kolmogorov-equation-paper3} to obtain
\begin{equation}
\label{equation:Ito-formula-and-kolmogorov-equation-paper3}
\begin{aligned}
&u \big(
T-t_{n+1}, 
\mathbb{Y}^{n}(t_{n+1})
\big)  
- 
u\big(
T-t_{n}, 
\mathbb{Y}^{n}(t_{n}) 
\big)  \\
&=
-
\int^{t_{n+1}}_{t_{n}}
\partial_{s}
u\big(
T-s, \mathbb{Y}^{n}(s)
\big)
\dd s
+
\int^{t_{n+1}}_{t_{n}}
D
u\big(
T-s, \mathbb{Y}^{n}(s)
\big)
f\big(
\mathbb{Y}^{n}(t_{n}) 
\big) \dd s \\
&\quad
+
\sqrt{2}
\int^{t_{n+1}}_{t_{n}}
D
u\big(
T-s, \mathbb{Y}^{n}(s)
\big)
\dd W_s 
+
\sum_{j=1}^{d}
\int^{t_{n+1}}_{t_{n}}
D^{2}
u\big(
T-s, \mathbb{Y}^{n}(s)
\big)
\big(e_{j} , e_{j} \big)
\dd s \\
&=
\int^{t_{n+1}}_{t_{n}}
D
u\big(
T-s, \mathbb{Y}^{n}(s)
\big)
\left(
f\big(
\mathbb{Y}^{n}(t_{n}) 
\big) 
-
f\big(
\mathbb{Y}^{n}(s) 
\big) 
\right)
\dd s
+
\sqrt{2}
\int^{t_{n+1}}_{t_{n}}
D
u\big(
T-s, \mathbb{Y}^{n}(s)
\big)
\dd W_s.
\end{aligned}
\end{equation}
}
{\color{black}
It follows from \eqref{equation:Ito-formula-and-kolmogorov-equation-paper3} and 
{\color{black}
the conditional expectation argument
}
that
}
\begin{small}
\begin{equation}
\label{equation:decomposition-J2-paper3}
\begin{aligned}
J_{2} 
&=
\left|
\sum_{n=0}^{N-1}  
\mathbb{E}\left[
\int_{t_{n}}^{t_{n+1}} 
Du\big(
T-s, 
\mathbb{Y}^{n}(s)
\big)
\Big(
f\big(
\mathbb{Y}^{n}(t_{n}) 
\big)
- 
f\big(
\mathbb{Y}^{n}(s) 
\big)
\Big)  \mathrm{d} s 
\right] 
\right| \\
& {
\color{black} \leq
}
\underbrace{
\left| 
\sum_{n=0}^{N-1}  
\mathbb{E}\left[
\int_{t_{n}}^{t_{n+1}} 
Du\big(
T-s, 
\mathbb{Y}^{n}(t_{n})
\big)
\Big(
f\big(
\mathbb{Y}^{n}(t_{n}) 
\big) 
-  
f\big(
\mathbb{Y}^{n}(s) 
\big)
\Big) \mathrm{d} s
\right] 
\right| 
}_{=:J_{2,1}}\\
&\quad + \color{black}
\underbrace{
\left|
\sum_{n=0}^{N-1}  
\mathbb{E}\left[
\int_{t_{n}}^{t_{n+1}} 
\Big(
Du\big(
T-s, \mathbb{Y}^{n}(s)
\big) 
- Du\big(
T-s, \mathbb{Y}^{n}(t_{n})
\big) 
\Big)
\Big(
f\big(\mathbb{Y}^{n}(t_{n}) \big)
-  f\big(\mathbb{Y}^{n}(s) \big)
\Big) 
\mathrm{d} s
\right] 
\right|
}_{=:J_{2,2}}.
\\
\end{aligned}
\end{equation}
\end{small}
Recalling the Taylor expansion, we note that,
\begin{equation}\label{equation:taylor-expansion-of-f-paper3}
\begin{aligned}
f\big(
\mathbb{Y}^{n}(s) 
\big) 
&= 
f\big(
\mathbb{Y}^{n}(t_{n}) 
\big) 
+ Df\big(
\mathbb{Y}^{n}(t_{n}) 
\big) 
\big(
\mathbb{Y}^{n}(s) 
- \mathbb{Y}^{n}(t_{n}) 
\big)
+ \mathcal{R}_{f}\big(
\mathbb{Y}^{n}(s), \mathbb{Y}^{n}(t_{n}) 
\big)\\
& = 
f\big(
\mathbb{Y}^{n}(t_{n}) 
\big) 
+ Df\big(
\mathbb{Y}^{n}(t_{n}) 
\big) 
\left[
f\left(
\mathbb{Y}^{n}(t_{n})
\right) (s-t_{n}) 
+ \sqrt{2} (W_{s}-W_{t_{n}}) 
\right] \\
& \quad + 
\mathcal{R}_{f}\big(
\mathbb{Y}^{n}(s), 
\mathbb{Y}^{n}(t_{n}) 
\big),
\end{aligned}
\end{equation}
where
\begin{equation}
\begin{aligned}
&\mathcal{R}_{f} 
\big(
\mathbb{Y}^{n}(s), 
\mathbb{Y}^{n}(t_{n})
\big) \\
&:= \int_{0}^{1}  
\Big(
Df\left(
\mathbb{Y}^{n}(t_{n})
+ r \big(
\mathbb{Y}^{n}(s) 
- \mathbb{Y}^{n}(t_{n}) 
\big)
\right) 
-Df\big(
\mathbb{Y}^{n}(t_{n}) 
\big)
\Big) 
\big(
\mathbb{Y}^{n}(s) - \mathbb{Y}^{n}(t_{n})
\big) 
\ \dd r .
\end{aligned}
\end{equation}
Moreover, in light of Assumption \ref{assumption:globally-polynomial-growth-condition-paper3}, Lemma \ref{lemma:Uniform-moment-bounds-of-the-PLMC-paper3} and Lemma \ref{lemma:holder-continuity-of-PLMC algorithm-paper3}, one can further 
{\color{black}
apply}
the H\"older inequality to imply
\begin{small}
\begin{equation} \label{equation:estimate-of-the-remaining-term-R-paper3}
\begin{aligned}
&\left\|
\mathcal{R}_{f} 
\big(
\mathbb{Y}^{n}(s), 
\mathbb{Y}^{n}(t_{n}) 
\big)  
\right\|_{L^{2}(\Omega, \mathbb{R}^{d})} \\
& \leq  
C \int_0^1 
{\color{black}
\left\|
\big(
1+\left\|
r \mathbb{Y}^{n}(s)+(1-r) \mathbb{Y}^{n}(t_{n})
\right\|
+
\left\|\mathbb{Y}^{n}(t_{n})\right\|
\big)^{\max\{0, \gamma-2\}} 
\left\|
\mathbb{Y}^{n}(s) - \mathbb{Y}^{n}(t_{n})\right\|^2 
\right\|_{L^2(\Omega, \mathbb{R})} 
}
\mathrm{d} r \\
&
\leq
{\color{black}
C
\Bigg(
\textbf{1}_{\gamma \in [1,2]}
\left\|
\mathbb{Y}^{n}(s) - \mathbb{Y}^{n}(t_{n}) 
\right\|^{2}_{L^4(\Omega, \mathbb{R})}
}\\
&
\quad \quad
{
\color{black}
+
\textbf{1}_{\gamma \in (2, \infty)}
\int_0^1 
\left\|
\big(
1+\left\|
r \mathbb{Y}^{n}(s)+(1-r) \mathbb{Y}^{n}(t_{n})
\right\|
+
\left\|\mathbb{Y}^{n}(t_{n})\right\|
\big)^{\gamma-2} 
\left\|
\mathbb{Y}^{n}(s) - \mathbb{Y}^{n}(t_{n})\right\|^2 
\right\|_{L^2(\Omega, \mathbb{R})} 
\Bigg)
}\\
& \leq
{\color{black}
C 
\left[
\textbf{1}_{\gamma \in [1,2]}
\left(
d^{\gamma} 
+ 
\|x_{0}\|^{2\gamma}_{L^{4\gamma} (\Omega, \mathbb{R}^{d})} 
\right) 
+
\textbf{1}_{\gamma \in (2, \infty)}
\left(
d^{3\gamma/2 - 1} 
+ 
\|x_{0}\|^{3\gamma - 2}_{L^{6\gamma-4} (\Omega, \mathbb{R}^{d})} 
\right) 
\right] h
}
\\
& \leq Cd^{\max \{\gamma, 3\gamma/2-1\}}h.
\end{aligned}
\end{equation}
\end{small}
Regarding $J_{2,1}$, 
{\color{black}
using \eqref{equation:taylor-expansion-of-f-paper3}
and a conditional expectation argument gives
}
\begin{equation}
\begin{aligned}
J_{2,1}
= 
\bigg| 
-\sum_{n=0}^{N-1}  
&\mathbb{E}
\Big[
\int_{t_{n}}^{t_{n+1}} 
Du\big(T-s, \mathbb{Y}^{n}(t_{n})\big)
\Big(
Df\big(
\mathbb{Y}^{n}(t_{n}) 
\big) 
f\left(\mathbb{Y}^{n}(t_{n})\right) (s-t_{n})  \\
&\ +  
\mathcal{R}_{f}\big(\mathbb{Y}^{n}(s), \mathbb{Y}^{n}(t_{n}) \big)
\Big) \dd s 
\Big] 
\bigg|,
\end{aligned}
\end{equation}
where we derive from
{\color{black}
\eqref{equation:growth-of-derivative-of-f-paper3}, \eqref{equation:growth-of-f-paper3} and Lemma \ref{lemma:Uniform-moment-bounds-of-the-PLMC-paper3} 
}
that
\begin{equation}\label{equation:estimate-of-another-remaining-term-paper3}
\begin{aligned}
\left\|
Df\big(
\mathbb{Y}^{n}(t_{n}) 
\big) 
f\left(\mathbb{Y}^{n}(t_{n})\right) (s-t_{n})
\right\|_{L^{2}(\Omega,\mathbb{R}^{d})}
\leq
C
\left(
1+
\sup_{0\leq r \leq N} \|Y_{r} \|^{2\gamma-1}_{L^{ 4\gamma-2} (\Omega, \mathbb{R}^{d})} 
\right)h 
\leq Cd^{\gamma-1/2}h.
\end{aligned}
\end{equation}
{\color{black}
Using Lemmas \ref{lemma:Uniform-moment-bounds-of-the-Langevin-SDE-paper3}, \ref{lemma:Uniform-moment-bounds-of-the-PLMC-paper3}, 
{\color{black}
\eqref{equation:Du-in-theorem-paper3}, \eqref{equation:Du-for-t>1-paper3},
} 
\eqref{equation:estimate-of-the-remaining-term-R-paper3}, \eqref{equation:estimate-of-another-remaining-term-paper3} and the H\"older inequality yields
}
{\color{black}
\begin{equation} \label{equation: estimate of J2,1-paper3}
\begin{aligned}
J_{2,1} 
& \leq 
C_{\star} \|\phi \|_{0}
\sum_{n=0}^{N-N_{1}-1} 
\int_{t_{n}}^{t_{n+1}}
e^{-c_{\star}(T-s-1)}
\Big(
d^{1/2} 
+ 
\big\|
\mathbb{Y}^{n}(t_{n})
\big\|_{L^{2}(\Omega, \mathbb{R}^{d})}
\Big)
\\
& \quad \quad 
\times
\left(
\left\|
Df\big(
\mathbb{Y}^{n}(t_{n}) 
\big) 
f\left(\mathbb{Y}^{n}(t_{n})\right) (s-t_{n})
\right\|_{L^{2}(\Omega,\mathbb{R}^{d})}
+
\left\|
\mathcal{R}_{f} 
\big(
\mathbb{Y}^{n}(s), 
\mathbb{Y}^{n}(t_{n}) 
\big)  
\right\|_{L^{2}(\Omega, \mathbb{R}^{d})} 
\right) \dd s \\
&\quad
+
C
\sum_{n=N-N_{1}}^{N-1} 
\int_{t_{n}}^{t_{n+1}}
\dfrac{ \|\phi\|_{0}}{\sqrt{T-s}}
\Big(
\left\|
Df\big(
\mathbb{Y}^{n}(t_{n}) 
\big) 
f\left(\mathbb{Y}^{n}(t_{n})\right) (s-t_{n})
\right\|_{L^{2}(\Omega,\mathbb{R}^{d})} \\
& \hspace{21em}
+
\left\|
\mathcal{R}_{f} 
\big(
\mathbb{Y}^{n}(s), 
\mathbb{Y}^{n}(t_{n}) 
\big)  
\right\|_{L^{2}(\Omega, \mathbb{R}^{d})} 
\Big) \dd s \\
&
\leq Cd^{\max \{\gamma+1/2, (3\gamma-1)/2\}}h.
\end{aligned}
\end{equation}
}
However, the estimate of $J_{2,2}$ is rather technical. To overcome the possible singularities, we need to 
{\color{black}
split}
the time interval 
{\color{black}
$[0,T)$
}
into 
{\color{black}
$[0, T-N_1h-h)$, $[T-N_1h-h, T-N_1h)$, $[T-N_1h, T-h)$ and $[T-h, T)$.
}
Applying the Taylor expansion to $Du(t,\cdot)$ leads to, for  $\upsilon_{2}^{n}(\bar{r}) :=\mathbb{Y}^{n}(t_{n}) + \bar{r}
(\mathbb{Y}^{n}(s)-\mathbb{Y}^{n}(t_{n}))$, $\bar{r}\in [0,1]$, 
\begin{small}
\begin{equation}
\label{equation:decomposition-J2-paper3,2}
\begin{aligned}
&J_{2,2} \\
& \leq 
\underbrace{
{\color{black}
\sum_{n=0}^{N-N_{1}-2} 
}
\left|
\mathbb{E}
\left[
\int_{t_{n}}^{t_{n+1}} 
\int_{0}^{1}
D^{2}u\big(T-s, \upsilon_{2}^{n}(\bar{r})
\big)
\Big(
\mathbb{Y}^{n}(s)-\mathbb{Y}^{n}(t_{n}),
f\big(\mathbb{Y}^{n}(t_{n}) \big) 
-  f\big(\mathbb{Y}^{n}(s) \big)
\Big) 
\dd \bar{r}
\dd s
\right] 
\right| 
}_{=:J_{2,2,1}}\\
& \quad + 
\underbrace{
\sum_{n=N-N_{1}}^{N-2} 
\left| 
\mathbb{E}
\left[
\int_{t_{n}}^{t_{n+1}}  
\int_{0}^{1}
D^{2}u\big(T-s, \upsilon_{2}^{n}(\bar{r})
\big)
\Big(
\mathbb{Y}^{n}(s)-\mathbb{Y}^{n}(t_{n}),
f\big(\mathbb{Y}^{n}(t_{n}) \big) 
-  f\big(\mathbb{Y}^{n}(s) \big)
\Big) 
\dd \bar{r}
\dd s
\right] 
\right| 
}_{=: J_{2,2,2}}\\
& \quad 
+ 
\color{black}
\underbrace{
\left|  
\mathbb{E}
\left[
\int_{T-h}^{T} 
\Big(
Du\big(T-s, \mathbb{Y}^{N-1}(s)\big) 
- Du\big(T-s, \mathbb{Y}^{N-1}(t_{N-1}) \big)
\Big)
\Big(
f\big(\mathbb{Y}^{N-1}(t_{N-1}) \big) -  f\big(\mathbb{Y}^{N-1}(s) \big)
\Big) \dd s
\right] \right|
}_{=:J_{2,2,3}}\\
& \quad 
+ 
\bigg|  
\mathbb{E}
\bigg[
\int_{(N - N_{1} - 1)h}^{(N - N_{1} )h} 
\int_{0}^{1}
D^{2}u\big(T-s, \upsilon_{2}^{N-N_{1}-1}(\bar{r})
\big) \\
&\quad \quad
\underbrace{ \hspace{5em}
\Big(
\mathbb{Y}^{N-N_{1}-1}(s)-\mathbb{Y}^{N-N_{1}-1}(t_{N - N_{1} - 1}),
f\big(\mathbb{Y}^{N-N_{1}-1}(t_{N - N_{1} - 1}) \big) 
-  f\big(\mathbb{Y}^{N-N_{1}-1}(s) \big)
\Big) 
\dd \bar{r}
\dd s
\bigg] 
\bigg|.
}_{=:J_{2,2,4}}
\\ 
\end{aligned}
\end{equation}
\end{small}
By 
{\color{black}
\eqref{equation:D2u-for-t>1-paper3} in 
}
Theorem \ref{theorem:regular-estimate-of-u-and-its-derivatives-paper3}, we have
{\color{black}
\begin{small}
\begin{equation}
\begin{aligned}
&J_{2,2,1} \\
&\leq 
C_{\star}
\|\phi\|_{0}
\sum_{n=0}^{N-N_{1}-2} 
\mathbb{E}
\bigg[
\int_{t_{n}}^{t_{n+1}} 
\int^{1}_{0}
e^{-c_{\star}(T-s-1)} 
(d^{1/2} + \|\upsilon^{n}_{2}(\bar{r})\|)
\left[
\textbf{1}_{\gamma \in [1,2]}
+
\textbf{1}_{\gamma \in (2, \infty)} 
(d^{\gamma/2-1}+\|\upsilon_{2}^{n}(\bar{r})\|^{\gamma-2})
\right]  \\
& \hspace{21em}
\times
\left\| 
\mathbb{Y}^{n}(s)-\mathbb{Y}^{n}(t_{n}) 
\right\| \cdot
\left\| 
f\big(\mathbb{Y}^{n}(t_{n}) \big) -  f\big(\mathbb{Y}^{n}(s) \big)
\right\| \dd \bar{r} \dd s
\bigg].
\end{aligned}
\end{equation}
\end{small}
}
Following the same arguments as used in the estimate of $J_{2,2,1}$, one derives from 
{\color{black}
\eqref{equation:D2u-in-theorem-paper3} in 
}
Theorem \ref{theorem:regular-estimate-of-u-and-its-derivatives-paper3} that
{\color{black}
\begin{equation}
\begin{aligned}
J_{2,2,2} 
&\leq 
C_{\star}
\|\phi\|_{0}
\sum_{n=N-N_{1}}^{N-2} 
\mathbb{E}
\bigg[
\int_{t_{n}}^{t_{n+1}} 
\int^{1}_{0}
\frac{1}{T-s}
\left[
\textbf{1}_{\gamma \in [1,2]}
+
\textbf{1}_{\gamma \in (2, \infty)} (d^{\gamma/2-1}+\|\upsilon^{n}_{2}(\bar{r})\|^{\gamma-2})
\right]  \\
& \hspace{11em}
\times
\left\| 
\mathbb{Y}^{n}(s)-\mathbb{Y}^{n}(t_{n}) 
\right\| \cdot
\left\| 
f\big(\mathbb{Y}^{n}(t_{n}) \big) -  f\big(\mathbb{Y}^{n}(s) \big)
\right\| \dd \bar{r} \dd s
\bigg].
\end{aligned}
\end{equation}
}
In the following, we come to the estimate of $J_{2,2,1}$ and $J_{2,2,2}$, which would be divided into two cases depending on different ranges of $\gamma$.

\noindent
\textbf{Case I: $1\leq \gamma \leq 2$:}\\
{\color{black}
In the case $1\leq \gamma \leq 2$, 
we recall 
$\upsilon^{n}_{2}(\bar{r}) :=\mathbb{Y}^{n}(t_{n}) + \bar{r}
(\mathbb{Y}^{n}(s)-\mathbb{Y}^{n}(t_{n}))$, $\bar{r}\in [0,1]$,
and use Lemma \ref{lemma:Uniform-moment-bounds-of-the-Langevin-SDE-paper3},
Lemma \ref{lemma:Uniform-moment-bounds-of-the-PLMC-paper3},
Lemma \ref{lemma:holder-continuity-of-PLMC algorithm-paper3} and the H\"older inequality to obtain
}
{\color{black}
\begin{equation}\label{equation:estimate-of-J2,2-paper3,1-part-1}
\begin{aligned}
J_{2,2,1}  
&\leq 
C_{\star}
\|\phi\|_{0}
\sum_{n=0}^{N-N_{1}-2} 
\int_{t_{n}}^{t_{n+1}} 
e^{-c_{\star}(T-s-1)} 
\left(
d^{1/2} 
+ 
\sup_{0\leq r \leq N} 
\left\|
Y_{r} 
\right\|_{L^{3\gamma }(\Omega, \mathbb{R}^{d})} 
\right)
 \\
& \hspace{9em}
\times
\left\| 
\mathbb{Y}^{n}(s)-\mathbb{Y}^{n}(t_{n}) 
\right\|_{L^{3}(\Omega, \mathbb{R}^{d})}
\left\| 
f\big(\mathbb{Y}^{n}(t_{n}) \big) -  f\big(\mathbb{Y}^{n}(s) \big)
\right\|_{L^{3\gamma/ (2\gamma-1) }(\Omega, \mathbb{R}^{d})}
\dd s
 \\
& \leq
C 
\left(
d^{3\gamma/2}
+
 \|x_{0} \|^{3\gamma}_{L^{3\gamma }(\Omega, \mathbb{R}^{d})}
\right)
h.
\end{aligned}
\end{equation}
}
Concerning $J_{2,2,2}$, one deduces
{\color{black}
\begin{equation}
\begin{aligned}
J_{2,2,2}  
&\leq 
C_{\star}
\|\phi\|_{0}
\sum_{n=N-N_{1}}^{N-2} 
\int_{t_{n}}^{t_{n+1}} 
\dfrac{1}{T-s} 
\left\| 
\mathbb{Y}^{n}(s)-\mathbb{Y}^{n}(t_{n}) 
\right\|_{L^{(3\gamma-1)/\gamma}(\Omega, \mathbb{R}^{d})}
 \\
&\hspace{16em}
\times
\left\| 
f\big(\mathbb{Y}^{n}(t_{n}) \big) -  f\big(\mathbb{Y}^{n}(s) \big)
\right\|_{L^{(3\gamma-1)/ (2\gamma-1) }(\Omega, \mathbb{R}^{d})}
\dd s \\
& \leq
C \left(
d^{(3\gamma-1)/2} 
+ \|x_{0} \|^{3\gamma-1}_{L^{3\gamma-1 }(\Omega, \mathbb{R}^{d})}
\right)
h |\ln h|,
\end{aligned}
\end{equation}
}
where
\begin{equation}
\begin{aligned}
\sum_{n=N-N_{1}}^{N-2} 
\int_{t_{n}}^{t_{n+1}}
\frac{1}{T-s} \dd s \
{\color{black}
\leq
}
 \int_{h}^{1} \frac{1}{s} \dd s 
= \left| \ln{h} \right|.
\end{aligned}
\end{equation}
\textbf{Case II: $\gamma>2$:}\\
In the case $\gamma >2$, 
{\color{black}
using the 
the H\"older inequality, Lemma \ref{lemma:Uniform-moment-bounds-of-the-PLMC-paper3},
{\color{black}
\eqref{equation:D2u-for-t>1-paper3} in 
}
Theorem \ref{theorem:regular-estimate-of-u-and-its-derivatives-paper3} and 
Lemma \ref{lemma:holder-continuity-of-PLMC algorithm-paper3} yields
}
{\color{black}
\begin{small}
\begin{equation} \label{equation:estimate-of-J2,2-paper3,1-part-2}
\begin{aligned}
J_{2,2,1} 
&\leq 
C_{\star}
\|\phi\|_{0}
\sum_{n=0}^{N-N_{1}-2} 
\mathbb{E}
\bigg[
\int_{t_{n}}^{t_{n+1}} 
e^{-c_{\star}(T-s-1)} 
\sup_{0\leq r \leq N} 
\left(
d^{1/2}
+ 
\left\|
Y_{r} 
\right\|_{L^{4\gamma-2 }(\Omega, \mathbb{R}^{d})} 
\right)
\left(
d^{\gamma/2-1}
+ 
\left\|
Y_{r} 
\right\|^{\gamma-2}_{L^{4\gamma-2 }(\Omega, \mathbb{R}^{d})} 
\right)
 \\
& \hspace{6em}
\times
\left\| 
\mathbb{Y}^{n}(s)-\mathbb{Y}^{n}(t_{n}) 
\right\|_{L^{(4\gamma-2)/\gamma}(\Omega, \mathbb{R}^{d})}
\cdot
\left\| 
f\big(\mathbb{Y}^{n}(t_{n}) \big) -  f\big(\mathbb{Y}^{n}(s) \big)
\right\|_{L^{(4\gamma-2)/ (2\gamma-1) }(\Omega, \mathbb{R}^{d})}
\dd s
\bigg] \\
& \leq
C
\left(
d^{2\gamma-1}
+
\|x_{0}\|^{4\gamma-2}_{L^{4\gamma-2}(\Omega, \mathbb{R}^{d})}
\right)
h.
\end{aligned}
\end{equation}
\end{small}
}
For $J_{2,2,2}$,
{\color{black}
by the H\"older inequality, Lemma \ref{lemma:Uniform-moment-bounds-of-the-PLMC-paper3}, 
\eqref{equation:D2u-in-theorem-paper3} and Lemma \ref{lemma:holder-continuity-of-PLMC algorithm-paper3},
}
one 
{\color{black}
obtains
}
{\color{black}
\begin{equation}
\begin{aligned}
J_{2,2,2}  
&\leq 
C_{\star}
\|\phi\|_{0}
\sum_{n=N-N_{1}}^{N-2} 
\mathbb{E}
\bigg[
\int_{t_{n}}^{t_{n+1}} 
\dfrac{1}{T-s} 
\sup_{0\leq r \leq N}
\left(
d^{\gamma/2-1}
+ 
\left\|
Y_{r} 
\right\|^{\gamma-2}_{L^{4\gamma-3 }(\Omega, \mathbb{R}^{d})} 
\right) 
\\
&\hspace{6em}
\times
\left\| 
\mathbb{Y}^{n}(s)-\mathbb{Y}^{n}(t_{n}) 
\right\|_{L^{(4\gamma-3)/\gamma}(\Omega, \mathbb{R}^{d})}
\cdot
\left\| 
f\big(\mathbb{Y}^{n}(t_{n}) \big) -  f\big(\mathbb{Y}^{n}(s) \big)
\right\|_{L^{(4\gamma-3)/ (2\gamma-1) }(\Omega, \mathbb{R}^{d})}
\dd s
\bigg] \\
& \leq
C \left(
d^{2\gamma-3/2} 
+ \|x_{0} \|^{4\gamma-3}_{L^{4\gamma-3 }(\Omega, \mathbb{R}^{d})}
\right)
h |\ln h|.
\end{aligned}
\end{equation}
}
We are now in a position to present the estimate of $J_{2,2,3}$ as follows:
\begin{equation} 
\begin{aligned}
J_{2,2,3} 
&\leq  
\left|
\mathbb{E}
\left[
\int_{T-h}^{T} 
Du\big(T-s, \mathbb{Y}^{N-1}(s)\big) 
\Big(
f\big(\mathbb{Y}^{N-1}(t_{N-1}) \big) 
-  f\big(\mathbb{Y}^{N-1}(s) \big)
\Big) \dd s
\right] 
\right| \\
& \quad 
{\color{black}
+ 
\left|
\mathbb{E}
\left[
\int_{T-h}^{T}  
Du\big(T-s, \mathbb{Y}^{N-1}(t_{N-1}) \big)
\Big(
f\big(\mathbb{Y}^{N-1}(t_{N-1}) \big) 
-  f\big(\mathbb{Y}^{N-1}(s) \big)
\Big) \dd s
\right] \right|.
}
\end{aligned}
\end{equation}
{\color{black}
We can deduce from Lemma \ref{lemma:Uniform-moment-bounds-of-the-PLMC-paper3}, 
{\color{black}
\eqref{equation:Du-in-theorem-paper3} in }
Theorem \ref{theorem:regular-estimate-of-u-and-its-derivatives-paper3}, Lemma \ref{lemma:holder-continuity-of-PLMC algorithm-paper3} and the H\"older inequality that
\begin{equation} 
\begin{aligned}
J_{2,2,3} 
&\leq C 
\int_{T-h}^{T} 
\dfrac{\|\phi\|_{0}}{\sqrt{T-s}} 
\left\|
f\big(\mathbb{Y}^{N-1}(t_{N-1}) \big) 
-  f\big(\mathbb{Y}^{N-1}(s) \big)
\right\|_{L^{2 }(\Omega, \mathbb{R}^{d})}
\dd s  \\
& \leq
C
\int_{T-h}^{T} 
\dfrac{\|\phi\|_{0}}{\sqrt{T-s}} \dd s 
\left(
d^{\gamma-1/2} 
+
\|x_{0} \|^{2\gamma-1}_{L^{4\gamma-2 }(\Omega, \mathbb{R}^{d})}
\right)
h^{1/2}
\\
&\leq 
C
\left(
d^{\gamma-1/2} 
+
\|x_{0} \|^{2\gamma-1}_{L^{4\gamma-2 }(\Omega, \mathbb{R}^{d})}
\right)
h,
\end{aligned}
\end{equation}
where it is straightforward to show $\int_{T-h}^{T} \frac{1}{T-s} \dd s = 2h^{1/2}$.
}
{\color{black}
For $J_{2,2,4}$,
we 
split the interval $[(N - N_{1} - 1)h, (N - N_{1})h)$ into
$[(N - N_{1} - 1)h, Nh-1)$, $[Nh-1, (N - N_{1})h)$ and
follow the same argument as used in treating $J_{2,2,1}$,
$J_{2,2,2}$ above
to obtain
\begin{equation}
\begin{aligned}
J_{2,2,4}
&\leq
\bigg|  
\mathbb{E}
\bigg[
\int_{(N - N_{1} - 1)h}^{Nh-1} 
\int_{0}^{1}
D^{2}u\big(T-s, \upsilon^{N-N_{1}-1}_{2}(\bar{r})
\big) \\
&\hspace{4em}
\Big(
\mathbb{Y}^{N-N_{1}-1}(s)-\mathbb{Y}^{N-N_{1}-1}(t_{N-N_{1}-1}),
f\big(\mathbb{Y}^{N-N_{1}-1}(t_{N-N_{1}-1}) \big) 
-  f\big(\mathbb{Y}^{N-N_{1}-1}(s) \big)
\Big) 
\dd \bar{r}
\dd s
\bigg] 
\bigg| \\
& \quad
+
\bigg|  
\mathbb{E}
\bigg[
\int_{Nh-1}^{(N - N_{1})h} 
\int_{0}^{1}
D^{2}u\big(T-s, \upsilon^{N-N_{1}-1}_{2}(\bar{r})
\big) \\
&\hspace{4em}
\Big(
\mathbb{Y}^{N-N_{1}-1}(s)-\mathbb{Y}^{N-N_{1}-1}(t_{N-N_{1}-1}),
f\big(\mathbb{Y}^{N-N_{1}-1}(t_{N-N_{1}-1}) \big) 
-  f\big(\mathbb{Y}^{N-N_{1}-1}(s) \big)
\Big) 
\dd \bar{r}
\dd s
\bigg] 
\bigg| \\
& \leq
Cd^{\max\{3\gamma/2 , 2\gamma-1 \} } h^{2}
\left(
1+\left| \ln{h} \right|
\right).
\end{aligned}
\end{equation}
}
Combining all the estimates of $J_{2,2,1}$ to $J_{2,2,3}$ yields
\begin{equation} \label{equation:estimate-of-J2,2-paper3}
\begin{aligned}
J_{2,2} 
\leq 
Cd^{\max\{3\gamma/2 , 2\gamma-1 \} } h\left| \ln{h} \right|.
\end{aligned}
\end{equation}
Gathering \eqref{equation:estimate-of-J1-paper3}, \eqref{equation: estimate of J2,1-paper3} and \eqref{equation:estimate-of-J2,2-paper3} together
completes the proof of \eqref{equation:weak-error-for-super-linear-drift-paper3}.

\noindent
\textbf{Step II: $\gamma=1$:}

In the case $\gamma=1$,
the LMC algorithm, i.e. the Euler-Maruyama scheme \eqref{equation:numerical-scheme-euler-maruyama-paper3}, is equivalent to the PLMC algorithm \eqref{equation:numerical-scheme-PLMC-algorithm-paper3}. 
Recalling Lemma \ref{lemma:error-estimate-between-x-and-projected-x-paper3}, one will arrive at
\begin{equation}
    J_{1}=0
\end{equation}
in \eqref{eq:decomposition-of-the-weak-error-paper3}.
In conjunction with \eqref{equation: estimate of J2,1-paper3} and \eqref{equation:estimate-of-J2,2-paper3} with $\gamma=1$, 
{\color{black}
the proof 
is completed.
}
\end{proof}
To conclude, 
{\color{black}
the proof of Theorem \ref{theorem:main-result-paper3} is obtained 
due to \eqref{equation:error-decomposition-tv-paper3}.
}

{\color{black}
\section{
Optimal convergence rate for the Lipschitz case $\gamma = 1$
}
\label{section:optimal-error-analysis}
\subsection{Introduction to Malliavin calculus}
In this part, we give a brief introduction to Malliavin calculus, which is a key tool for us to obtain the optimal convergence rate. 
For details, one can refer to the classical monograph \cite{nualart2006malliavin}.
Denote $H = L^2([0,T], \mathbb{R}^d)$ 
as the space of all measurable functions 
$\varphi: [0,T] \rightarrow \mathbb{R}^{d}$ such that
$\int_{0}^{T} \|\varphi(t)\|^2  \dd t < \infty$.
In the following, we use 
$W(v)$, 
$v = ( [v(\cdot)]_{1}, \cdots, [v(\cdot)]_{d} )^{T} \in H$,
to denote the isonormal Gaussian process 
as
\begin{equation}
W(v)
:=
\sum_{i=1}^d 
\int_0^T [v(t)]_{i} \mathrm{d} W_{i, t} .
\end{equation}
Moreover, denote $C^{\infty}_{p}(\mathbb{R}^{M})$ as
the set of all infinitely continuously differentiable
functions
$g: \mathbb{R}^{M} \rightarrow \mathbb{R}$
such that $g$ and all of its partial derivatives have
polynomial growth.
Let $\mathcal{S}$ denote the class of smooth random variables such that a random
variable $F \in \mathcal{S}$ has the form
\begin{equation}
\label{equation:form-of-rv-paper3}
F
=g
\left(
W\left(v_1\right), \ldots,
W\left(v_M\right)\right),
\end{equation}
where
$g \in C^{\infty}_{p}(\mathbb{R}^{M})$ and
$v_1, \ldots, v_M \in H$, $M \geq 1$. 
We are in a position to give the definition of the derivative operator $\mathscr{D}$.
\begin{definition}
\label{def:Derivative operator-D-mc-paper3}
(Derivative operator $\mathscr{D}$)
The derivative of a smooth random variable $F$ of the form \eqref{equation:form-of-rv-paper3} is the $H-$valued random variable given by
\begin{equation}
\mathscr{D}_t F
=
\sum_{i=1}^M 
\partial_i 
g\left(
W\left(v_1\right), \ldots, W\left(v_M\right)
\right) v_i(t), \quad 0 \leq t \leq T.
\end{equation}
Further, given $v\in H$,
denote $\mathscr{D}^{v}F$ by the Malliavin derivative with respect to $F$ along the direction $v$ as follows:
\begin{equation}
\mathscr{D}^{v}F:=
\int_{0}^{T}
\left\langle
v(s), \mathscr{D}_{s}F
\right\rangle
\dd s.
\end{equation}
\end{definition} 
For any $p \geq 1$,
denote 
by $\mathbb{D}^{1,2}$ as
the closure of the class of smooth random variables in $\mathcal{S}$ with
respect to the norm
\begin{equation}
\|F\|_{\mathbb{D}^{1,2}}
:=
\left(
\mathbb{E}
\left[|F|^2\right]
+\mathbb{E}
\left[
\int_{0}^{T}
\|\mathscr{D}_{s} F\|^{2}
\dd s
\right]
\right)^{1/2} .
\end{equation}
We then present the definition of the divergence operator $\delta$
as well as 
 the useful integration by parts formula.
\begin{definition}
\label{def:Divergence operator-paper3}
(Divergence operator $\delta$)
Denote by $\delta$ the adjoint of the operator $\mathscr{D}$.
That is,
$\delta$
is an unbounded operator on $L^{2}(\Omega, H)$ with values in $L^{2}(\Omega, \mathbb{R})$ such that:
\begin{enumerate}
\item[(i)]
The domain of $\delta$, denoted by $\operatorname{Dom} \delta$, is the set of $\mathbb{R}^{d}$-valued square integrable random variables $u \in L^2(\Omega, H)$ such that
\begin{equation}
\left|
\mathbb{E}
\left[
\mathscr{D}^{u} F
\right]
\right| 
\leq C
\|F\|_{ L^2(\Omega, \mathbb{R})},
\end{equation}
for all $F \in \mathbb{D}^{1,2}$, where $C$ is some constant depending on $u$.
\item[(ii)]
If $u$ belongs to $\operatorname{Dom} \delta$, then $\delta(u)$ is the element of $L^2(\Omega, \mathbb{R})$ characterized by
\begin{equation}
\label{equation:integration-ny-part-in-def-paper3}
\mathbb{E}
\left[
F \delta(u)
\right]
=
\mathbb{E}
\left[
\mathscr{D}^{u} F
\right],
\end{equation}
for any $F \in \mathbb{D}^{1,2}$.
\end{enumerate}
\end{definition}
The next lemma
gives the important calculus rule (see \cite[Proposition 1.3.3]{nualart2006malliavin}), which allows us to factor out a scalar random variable in a divergence.
\begin{lemma}
\label{lemma:calculus-rules-paper3}
Let $F \in \mathbb{D}^{1,2}$ and $u \in \operatorname{Dom} \delta$ such that $Fu \in L^2(\Omega, H)$. Then $Fu \in \operatorname{Dom} \delta$ and the following equality is true
\begin{equation}
\label{equation:calculus-rules-in-lemma-paper3}
\delta (Fu) = F \delta(u) -   \mathscr{D}^{u} F,
\end{equation}
provided the right-hand side of \eqref{equation:calculus-rules-in-lemma-paper3} is square integrable.
\end{lemma}

\subsection{Proof of Theorem \ref{theorem:main-result-paper3-optimal}: optimal weak convergence rate}
In this subsection, we start to show an optimal weak convergence rate for the time discretization.
Revisiting the error analysis in the proof of Theorem \ref{theorem: Time-independent weak error analysis-paper3},
one can straightforwardly find that the logarithmic factor $\ln h $ only appears in $J_{2,2,2}$ from \eqref{equation:decomposition-J2-paper3,2}.
Thus 
a refinement of the estimate $J_{2,2,2}$,
which lies in the interval $[T-N_{1}h, T-h)$,
is required in 
order to 
remove the logarithmic term $\ln h $  and
achieve the optimal convergence rate.

In the following,
we set $T \geq 3/2$  without any loss of generality.
Then,
for the grid points $t_{n}$, $t_{n+1}$
satisfying
$T-N_{1}h \leq t_n < t_{n+1} \leq T-h <T$,
where
$N_1 h\leq 1 <(N_1+1)h$,
one is able to find a grid point $t_{m}$ such that,
\begin{equation}
\label{equation:time-interval-in-mc-paper3}
0 \leq t_{m} < t_{n} < t_{n+1} <T, \quad \text{with} \quad
\tfrac{1}{2} \leq t_{n+1} - t_{m} \leq \tfrac{2}{3}.
\end{equation}
Since the 
LMC algorithm \eqref{equation:numerical-scheme-euler-maruyama-paper3}, denoted by $\{\widetilde{Y}_{n}\}_{n \geq 0}$, is equivalent to the PLMC algorithm \eqref{equation:numerical-scheme-PLMC-algorithm-paper3} with $\gamma=1$,
we still use \eqref{introduction:continuous-time-version-of-PLMC algorithm-paper3} as the continuous extension of the LMC algorithm in which $\mathbb{Y}^{n}(t_{n}) = \mathscr{P}(Y_n) = \widetilde{Y}_{n}$.
Also, we introduce the following process:
\begin{equation}
\label{equation:def-el-paper3}
b_{\ell}(t):=
\frac{\sqrt{2}}{2}
\sum_{i=m}^{n}
\textbf{1}_{[t_{i}, t_{i+1})}(t)
\left( 
\tfrac{
e_{\ell}
}{t_{n+1} - t_{m}} 
- 
\tfrac{t_i - t_{m} }{t_{n+1} - t_{m}} 
D f
(\widetilde{Y}_i) 
e_{\ell}
\right),
\quad
t \in [t_m, t_{n+1}),
\end{equation}
where  $\{ e_{\ell}\}_{\ell\in \{1, \cdots,d \} }$ denotes the orthonormal basis of $\mathbb{R}^{d}$.
We directly observe that the process \eqref{equation:def-el-paper3}  is adapted with respect to $\mathcal{F}_t$, $t\in [t_{m}, t_{n+1})$, and $b_{\ell}(\cdot) \in L^{2}(\Omega, H)$ 
due to
Assumption \ref{assumption:globally-polynomial-growth-condition-paper3} and Lemma \ref{lemma:Uniform-moment-bounds-of-the-PLMC-paper3} with $\gamma=1$.
Thanks to \cite[Proposition 1.3.11]{nualart2006malliavin},
we deduce that $b_{\ell} \in \operatorname{Dom} \delta$ and
\begin{equation}
\label{equation:def-of-b-paper3}
\delta(b_{\ell}) =
\int_{t_{m}}^{t_{n+1}} 
\left\langle
b_{\ell}, \dd W_{s}
\right\rangle
.
\end{equation}
Now we are in the position to show the  Malliavin regularity estimates of the process \eqref{introduction:continuous-time-version-of-PLMC algorithm-paper3} with respect to the direction $b_{\ell}$.
\begin{lemma}
\label{lemma:Malliavin regularity-paper3}
Let Assumptions 
\ref{assumption:globally-polynomial-growth-condition-paper3},  \ref{assumption:contractivity-at-infinity-condition-paper3} and  \ref{assumption:coercivity-condition-of-the-drift-paper3} 
hold with $\gamma=1$. 
Given the direction $b_{\ell}$ as defined by \eqref{equation:def-el-paper3},
for any $p \in [1, \infty)$,
there exists a positive constant $C$ depending  on the drift $f$ and $p$ such that,
\begin{equation}
\label{eqaution:estimate-of-divergence-operator-paper3}
\left\|
\delta(b_{\ell}) 
\right\|_{L^{2p}(\Omega, \mathbb{R})} 
\leq
C,
\end{equation}
and, for 
$t_m \leq t_{i} \leq s \leq t_{i+1} \leq t_{n+1}$,
\begin{equation}
\label{equation:MC-Dy-paper3}
\mathscr{D}^{b_{\ell}} \mathbb{Y}^{i}(s) 
= 
\tfrac{s - t_{m}}{t_{n+1} - t_{m}} 
e_{\ell}.
\end{equation}
Moreover, 
there exists some positive constant $C$ depending on $f$ and $p$ such that
\begin{equation}
\begin{aligned}
\left\|
\mathscr{D}^{b_{\ell}} 
\mathbb{Y}^{n}(s) 
-
\mathscr{D}^{b_{\ell}} 
\mathbb{Y}^{n}(t_{n})
\right\|
 &\leq
2h, \\
\left\|
\mathscr{D}^{b_{\ell}}
f\big(
\mathbb{Y}^{n}(s)
\big)
-
\mathscr{D}^{b_{\ell}} 
f
\big(
\mathbb{Y}^{n}(t_{n}) 
\big)
\right\|_{L^{2p}(\Omega, \mathbb{R}^{d})} 
&\leq
Ch^{1/2}
\left(
d^{1/2} + \|x_{0}\|_{L^{2p}(\Omega, \mathbb{R}^{d})}
\right).
\end{aligned}
\end{equation}
\end{lemma}
\begin{proof}
[Proof of Lemma \ref{lemma:Malliavin regularity-paper3}.]
In light of
Assumption \ref{assumption:globally-polynomial-growth-condition-paper3} with $\gamma=1$,
\eqref{equation:time-interval-in-mc-paper3}, 
\eqref{equation:def-el-paper3}, \eqref{equation:def-of-b-paper3} and the Burkholder-Davis-Gundy inequality,
one derives,
for $p \in [1, \infty)$,
\begin{equation}
\left\|
\delta(b_{\ell}) 
\right\|_{L^{2p}(\Omega, \mathbb{R})}
\leq
\left(
\int_{t_{m}}^{t_{n+1}}
\left\|
b_{\ell}(s)
\right\|^{2}_{L^{2p}(\Omega, \mathbb{R}^{d})}
\dd s
\right)^{1/2}
\leq
C.
\end{equation}
Thus the proof of estimate \eqref{eqaution:estimate-of-divergence-operator-paper3} is finished.
Before proceeding further,
we first show that the estimate \eqref{equation:MC-Dy-paper3} holds true 
at any grid point in time interval $[t_{m}, t_{n+1}]$ as
\begin{equation}
\label{equation:DY-mc-grid-paper3}
\mathscr{D}^{b_{\ell}} \mathbb{Y}^{i}(t_{i})
=
\mathscr{D}^{b_{\ell}} 
\widetilde{Y}_i
=
\tfrac{t_{i} - t_{m}}{t_{n+1} - t_{m}} 
e_{\ell}.
\end{equation}
Thanks to the chain rule, \eqref{equation:numerical-scheme-euler-maruyama-paper3} and \eqref{equation:def-of-b-paper3}, we have
\begin{equation}
\label{equation:expression-LMC-MC-paper3}
\begin{aligned} 
\mathscr{D}^{b_{\ell}}
\widetilde{Y}_{i+1} 
&=
\mathscr{D}^{b_{\ell}}
\widetilde{Y}_i
+
h 
\mathscr{D}^{b_{\ell}} 
f(
\widetilde{Y}_i 
) 
+
\sqrt{2}
\left(
\int_{t_{i}}^{t_{i+1}}
b_{\ell}(s)
\dd s
\right) \\
& = 
\mathscr{D}^{b_{\ell}}
\widetilde{Y}_i 
+
h D 
f(
\widetilde{Y}_i
)  
\mathscr{D}^{b_{\ell}}
\widetilde{Y}_i
+
h
\left( 
\tfrac{
e_{\ell}
}{
t_{n+1} - t_{m}
} 
- 
\tfrac{t_i - t_{m} }{t_{n+1} - t_{m}}
D 
f
(\widetilde{Y}_i)
e_{\ell} \right).
\end{aligned}
\end{equation}
It is straightforward that the estimate \eqref{equation:DY-mc-grid-paper3} is trivial for $i=m$.
We then
suppose that 
\eqref{equation:DY-mc-grid-paper3} holds for $m < i< n+1 $ 
and
obtain the  estimate of $\mathscr{D}^{b_{\ell}}
\widetilde{Y}_{i+1}$  by \eqref{equation:expression-LMC-MC-paper3} as follows,
\begin{equation}
\begin{aligned}
\mathscr{D}^{b_{\ell}}
\widetilde{Y}_{i+1}
&=
\tfrac{t_i - t_{m}}{t_{n+1} - t_{m}} 
e_{\ell}
+
h \tfrac{t_{i} - t_{m}}{t_{n+1} - t_{m}} 
D f
(\widetilde{Y}_{i})
e_{\ell} 
+
h
\left( \tfrac{e_l}{t_{n+1} - t_{m}} 
- 
\tfrac{t_i - t_{m} }{t_{n+1} - t_{m}} 
D 
f
(\widetilde{Y}_{i})
e_{\ell} \right)
\\
&=
\tfrac{t_{i+1} - t_{m}}{t_{n+1} - t_{m}} 
e_{\ell},
\end{aligned}
\end{equation}
where we have used the fact that $h = t_{i+1} - t_{i}$.
Then it follows from the induction that the estimate \eqref{equation:DY-mc-grid-paper3} holds for $m \leq i \leq n+1$.
In this way,
we use \eqref{equation:DY-mc-grid-paper3}
and the continuous version \eqref{introduction:continuous-time-version-of-PLMC algorithm-paper3} of the LMC with $\mathbb{Y}^{i}(t_i) = \widetilde{Y}_{i}$
and the chain rule
to get, 
for $t_m \leq t_{i} \leq s \leq t_{i+1} \leq t_{n+1}$,
\begin{equation}
\label{equation:MC-Dy-paper3-in-proof}
\begin{aligned}
\mathscr{D}^{b_{\ell}} \mathbb{Y}^{i}(s) 
&=
\mathscr{D}^{b_{\ell}}
\mathbb{Y}^{i}(t_i)
+
(s-t_i) 
\mathscr{D}^{b_{\ell}} 
f(
\mathbb{Y}^{i}(t_i)
) 
+
\sqrt{2}
\left(
\int_{t_{i}}^{s}
b_{\ell}(s)
\dd s
\right) \\
& =
\tfrac{t_i - t_{m}}{t_{n+1} - t_{m}} 
e_{\ell}
+
(s-t_i)
\tfrac{t_{i} - t_{m}}{t_{n+1} - t_{m}} 
D f
(\widetilde{Y}_{i})
e_{\ell} 
+
(s-t_i)
\left( \tfrac{e_l}{t_{n+1} - t_{m}} 
- 
\tfrac{t_i - t_{m} }{t_{n+1} - t_{m}} 
D 
f
(\widetilde{Y}_{i})
e_{\ell} \right) \\
&=
\tfrac{s - t_{m}}{t_{n+1} - t_{m}} 
e_{\ell}.
\end{aligned}
\end{equation}
The estimate of \eqref{equation:MC-Dy-paper3} is thus finished.

Recalling \eqref{introduction:continuous-time-version-of-PLMC algorithm-paper3},
\eqref{equation:time-interval-in-mc-paper3},
\eqref{equation:DY-mc-grid-paper3} and \eqref{equation:MC-Dy-paper3-in-proof},
one has,
for $t_{m} < t_{n} \leq s \leq t_{n+1}$,
\begin{equation}
\label{equation:MC-for-delta-y-paper3}
\begin{aligned}
\left\|
\mathscr{D}^{b_{\ell}}
\mathbb{Y}^{n}(s) 
-
\mathscr{D}^{b_{\ell}}
\mathbb{Y}^{n}(t_{n})
\right\|
&=
\tfrac{s - t_{n}}{t_{n+1} - t_{m}}
\left\|
e_{\ell}
\right\|
\leq
2h.
\end{aligned}
\end{equation}
Equipped with \eqref{equation:MC-for-delta-y-paper3} above,
we then use Assumption \ref{assumption:globally-polynomial-growth-condition-paper3}, Lemma \ref{lemma:holder-continuity-of-PLMC algorithm-paper3} with $\gamma=1$, the chain rule, \eqref{equation:time-interval-in-mc-paper3}  and \eqref{equation:DY-mc-grid-paper3} to derive,
for $p \in [1, \infty)$,
\begin{equation}
\begin{aligned}
&\left\|
\mathscr{D}^{b_{\ell}} 
f\big(
\mathbb{Y}^{n}(s) 
\big) 
-
\mathscr{D}^{b_{\ell}} 
f
\big(
\mathbb{Y}^{n}(t_{n})
\big) 
\right\|_{L^{2p}(\Omega, \mathbb{R}^{d})} \\
&=
\left\|
D
f\big(
\mathbb{Y}^{n}(s) 
\big)
\mathscr{D}^{b_{\ell}}
\mathbb{Y}^{n}(s)
-
D
f
\big(
\mathbb{Y}^{n}(t_{n}) 
\big)
\mathscr{D}^{b_{\ell}} 
\mathbb{Y}^{n}(t_{n}) 
\right\|_{L^{2p}(\Omega, \mathbb{R}^{d})} \\
&
\leq
\left\|
D
f\big(
\mathbb{Y}^{n}(s) 
\big)
\mathscr{D}^{b_{\ell}}
\mathbb{Y}^{n}(s)
-
D
f
\big(
\mathbb{Y}^{n}(s) 
\big)
\mathscr{D}^{b_{\ell}} 
\mathbb{Y}^{n}(t_{n}) 
\right\|_{L^{2p}(\Omega, \mathbb{R}^{d})}
\\
& \quad
+
\left\|
D
f\big(
\mathbb{Y}^{n}(s) 
\big)
\mathscr{D}^{b_{\ell}}
\mathbb{Y}^{n}(t_{n})
-
D
f
\big(
\mathbb{Y}^{n}(t_{n}) 
\big)
\mathscr{D}^{b_{\ell}} 
\mathbb{Y}^{n}(t_{n})
\right\|_{L^{2p}(\Omega, \mathbb{R}^{d})}
\\
& \leq
C
\left(
\left\|
\mathscr{D}^{b_{\ell}} 
\mathbb{Y}^{n}(s) 
-
\mathscr{D}^{b_{\ell}}
\mathbb{Y}^{n}(t_{n}) 
\right\|
+
\left\|
\mathbb{Y}^{n}(s) - \mathbb{Y}^{n}(t_n)
\right\|_{L^{2p}(\Omega, \mathbb{R}^{d})}
\left\|
\mathscr{D}^{b_{\ell}}
\mathbb{Y}^{n}(t_{n}) 
\right\|
\right)
\\
&\leq
Ch^{1/2}
\left(
d^{1/2} + \|x_{0}\|_{L^{2p}(\Omega, \mathbb{R}^{d})}
\right),
\end{aligned}
\end{equation}
where $C$ is some positive constant depending on the drift $f$.
The proof is completed.
\end{proof}
At this stage,
we are well-prepared to derive the optimal convergence rate of the LMC \eqref{equation:numerical-scheme-euler-maruyama-paper3}.
\begin{theorem}
\label{theorem:optimal-convergence-rate-lipschitz-setting-paper3}
Let Assumptions \ref{assumption:globally-polynomial-growth-condition-paper3}, \ref{assumption:contractivity-at-infinity-condition-paper3} and \ref{assumption:coercivity-condition-of-the-drift-paper3} hold with $\gamma=1$. Let $\{X^{x_{0}}_{t}\}_{t\geq 0}$ and $\{\widetilde{Y}^{x_{0}}_{n} \}_{n \geq 0}$ be the solutions of SDE \eqref{eq:langevin-SODE} and the LMC algorithm \eqref{equation:numerical-scheme-euler-maruyama-paper3} with the same initial state $X^{x_{0}}_{0} = Y^{x_{0}}_{0} = x_{0}$, respectively. Also, let 
$h\in (0, \min\{1/2a_{1}, 2a_{1}/(a_{1}+2C^{2}_{f}), 1/4\} )$, 
where $a_{1}$ and $C_f$ are given in Assumption \ref{assumption:coercivity-condition-of-the-drift-paper3} and Lemma \ref{lemma: useful estimate of PLMC algorithm-paper3}, respectively,
be the uniform timestep. Then for 
 any
test function
$\phi \in C_{b}(\mathbb{R}^{d})$, 
\begin{equation} \label{equation:weak-error-for-Lipshitz-drift-paper3}
\Big| 
\mathbb{E}
\big[
\phi\big(
\widetilde{Y}^{x_{0}}_{N}
\big) 
\big] 
- 
\mathbb{E}\left[
\phi\big(
X^{x_{0}}_{T}
\big) 
\right]
\Big| 
\leq 
Cd^{3/2} h.
\end{equation}
\end{theorem}

We mention that some techniques in the proof of Theorem \ref{theorem:optimal-convergence-rate-lipschitz-setting-paper3} are borrowed from \cite{li2023unadjusted}, where the non-asymptotic analysis of the LMC \eqref{equation:numerical-scheme-euler-maruyama-paper3} with decreasing step sizes was carried out. It is worthwhile to point out that the error constants in \cite{li2023unadjusted} essentially depend on the maximum step size in the decreasing sequence. This fact makes the error analysis of LMC \eqref{equation:numerical-scheme-euler-maruyama-paper3} with uniform step sizes here isnon-trival and not a direct consequence of that in \cite{li2023unadjusted}.

\begin{proof}
[Proof of Theorem \ref{theorem:optimal-convergence-rate-lipschitz-setting-paper3}.]
According to the proof of Theorem \ref{theorem: Time-independent weak error analysis-paper3},
the weak error can be decomposed based on the Kolmogorov equation as
\eqref{eq:decomposition-of-the-weak-error-paper3}.
Recalling
$x-\mathscr{P}(x) = 0, \forall x \in \mathbb{R}^{d}$
for $\gamma=1$ in
Lemma \ref{lemma:error-estimate-between-x-and-projected-x-paper3}, one will arrive at
\begin{equation}
\label{equation:J1-lipschitz-paper3}
    J_{1}=0.
\end{equation}
Regarding $J_2$,
we only need to improve the estimate of $J_{2,2,2}$ in \eqref{equation:decomposition-J2-paper3}:
\begin{equation}
J_{2,2,2}:=
\sum_{n=N-N_{1}}^{N-2} 
\left| 
\mathbb{E}
\left[
\int_{t_{n}}^{t_{n+1}}  
\int_{0}^{1}
D^{2}u\big(T-s, \upsilon^{n}_{2}(\bar{r})
\big)
\Big(
\mathbb{Y}^{n}(s)-\mathbb{Y}^{n}(t_{n}),
f\big(\mathbb{Y}^{n}(t_{n}) \big) 
-  f\big(\mathbb{Y}^{n}(s) \big)
\Big) 
\dd \bar{r}
\dd s
\right] 
\right|,
\end{equation}
where
$\upsilon_{2}^{n}(\bar{r}) :=\mathbb{Y}^{n}(t_{n}) + \bar{r}
(\mathbb{Y}^{n}(s)-\mathbb{Y}^{n}(t_{n}))$, $\bar{r}\in [0,1]$.
Indeed, by setting $\gamma=1$ in the proof of Theorem \ref{theorem: Time-independent weak error analysis-paper3}, the remaining terms in \eqref{equation:decomposition-J2-paper3} and \eqref{equation:decomposition-J2-paper3,2} can be bounded to be optimal:
\begin{equation}
\label{equation:J2-remaining-paper3}
J_{2,1} \vee J_{2,2,1} \vee J_{2,2,3}
\vee J_{2,2,4}
\leq Cd^{3/2}h.
\end{equation}
To improve the estimate $J_{2,2,2}$,
we first use the chain rule and Lemma \ref{lemma:Malliavin regularity-paper3} to derive,
for $i, \ell \in \{1, \cdots, d\}$, 
$\bar{r}\in [0,1]$,
\begin{equation}
\label{equation:D2u-to-malliavin-derivatives-paper3}
\begin{aligned}
\mathscr{D}^{b_{\ell}}
\Big(
Du\big(
T-s, \upsilon_{2}^{n}(\bar{r})
\big)
e_{i}
\Big)
&=
D^{2}
u
\big(
T-s, \upsilon_{2}^{n}(\bar{r})
\big)
\big(
e_{i}, \mathscr{D}
^{b_{\ell}}
\upsilon_{2}^{n}(\bar{r})
\big) \\
&=
\tfrac{t_n - t_m + \bar{r}(s-t_n)} {t_{n+1} - t_{m}}
D^{2}u\big(T-s, \upsilon_{2}^{n}(\bar{r})
\big)
\big(
e_{i}, e_{\ell}
\big),
\end{aligned}
\end{equation}
where we recall $\{ e_{j}\}_{j\in \{1, \cdots,d \} }$ is the orthonormal basis of $\mathbb{R}^{d}$.
Further, it follows from \eqref{equation:time-interval-in-mc-paper3} that for $0 < h \leq 1/4$
\begin{equation}
\label{equation:malliavin-derivative-of-v-paper3}
\tfrac{t_n - t_m + \bar{r}(s-t_n)} {t_{n+1} - t_{m}}
\geq
1- \tfrac{h} {t_{n+1} - t_{m}} \geq
\tfrac{1}{2}.
\end{equation}
By Definition \ref{def:Derivative operator-D-mc-paper3},
the integration by parts formula for
Malliavin derivative, i.e., \eqref{equation:integration-ny-part-in-def-paper3} in Definition \ref{def:Divergence operator-paper3}, \eqref{equation:D2u-to-malliavin-derivatives-paper3} and \eqref{equation:malliavin-derivative-of-v-paper3},
we rewrite $J_{2,2,2}$ as,
for 
$\mathbb{Y}^{n}(\cdot) 
= (
[\mathbb{Y}^{n}(\cdot)]_{1}, \cdots, 
[\mathbb{Y}^{n}(\cdot)]_{d}
)^{T}$,
$f(\cdot) = ([f(\cdot)]_{1}, \cdots, [f(\cdot)]_{d})^{T}$,
\begin{equation}
\label{equation:decomposition-J2-paper3,2,2-mc}
\begin{aligned}
J_{2,2,2}
&=
\sum_{n=N-N_{1}}^{N-2} 
\Bigg| 
\sum_{i, \ell =1}^{d}
\mathbb{E}
\bigg[
\int_{t_{n}}^{t_{n+1}}  
\int_{0}^{1}
D^{2}u\big(T-s, \upsilon_{2}^{n}(\bar{r})
\big)
\big(
e_{i}, e_{\ell}
\big)   \\
&
\hspace{8em}
\times
\big(
[\mathbb{Y}^{n}(s)]_{i}
-
[\mathbb{Y}^{n}(t_{n})]_{i}
\big)
\Big(
\big[
f\big(\mathbb{Y}^{n}(t_{n}) \big) 
\big]_{\ell}
-
\big[
f\big(\mathbb{Y}^{n}(s) \big)
\big]_{\ell}
\Big) 
\mathrm{d} \bar{r}
\mathrm{d} s
\bigg] 
\Bigg|, \\
& \leq
2
\sum_{n=N-N_{1}}^{N-2} 
\Bigg| 
\sum_{i, \ell =1}^{d}
\mathbb{E}
\bigg[
\int_{t_{n}}^{t_{n+1}}  
\int_{0}^{1}
\mathscr{D}^{b_{\ell}}
\Big(
Du
\big(T-s, \upsilon_{2}^{n}(\bar{r})
\big)
e_{i}
\Big)  \\
&
\hspace{8em}
\times
\big(
[
\mathbb{Y}^{n}(s)
]_{i}
-
[
\mathbb{Y}^{n}(t_{n})
]_{i}
\big)
\Big(
\big[
f\big(\mathbb{Y}^{n}(t_{n}) \big) 
\big]_{\ell}
- 
\big[
f\big(\mathbb{Y}^{n}(s) \big)
\big]_{\ell}
\Big) 
\mathrm{d} \bar{r}
\mathrm{d} s
\bigg] 
\Bigg|, \\
& = 
2
\sum_{n=N-N_{1}}^{N-2} 
\Bigg| 
\sum_{i, \ell =1}^{d}
\mathbb{E}
\bigg[
\int_{t_{n}}^{t_{n+1}}  
\int_{0}^{1}
\int_{t_m}^{t_{n+1}}
\Big\langle
b_{\ell}(\widetilde{r})
,
\mathscr{D}_{\widetilde{r}}
\left(
Du\big(
T-s, \upsilon_{2}^{n}(\bar{r}) 
\big)
e_{i}
\right)
\Big\rangle
 \\
&
\hspace{8em}
\times
\big(
[\mathbb{Y}^{n}(s)]_{i}
-
[\mathbb{Y}^{n}(t_{n})]_{i}
\big)
\Big(
\big[
f\big(\mathbb{Y}^{n}(t_{n}) \big) 
\big]_{\ell}
-  
\big[
f\big(\mathbb{Y}^{n}(s) \big)
\big]_{\ell}
\Big) 
\mathrm{d} \widetilde{r}
\mathrm{d} \bar{r}
\mathrm{d} s
\bigg] 
\Bigg|, \\
& = 
2
\sum_{n=N-N_{1}}^{N-2} 
\Bigg| 
\sum_{i, \ell =1}^{d}
\mathbb{E}
\bigg[
\int_{t_{n}}^{t_{n+1}}  
\int_{0}^{1}
Du\big(
T-s, \upsilon_{2}^{n}(\bar{r}) 
\big)
e_{i}
 \\
&
\hspace{8em}
\times
\delta
\bigg(
b_{\ell}
\big(
[\mathbb{Y}^{n}(s)]_{i}
-
[\mathbb{Y}^{n}(t_{n})]_{i}
\big)
\Big(
\big[
f\big(\mathbb{Y}^{n}(t_{n}) \big) 
\big]_{\ell}
-  
\big[
f\big(\mathbb{Y}^{n}(s) \big)
\big]_{\ell}
\Big) 
\bigg)
\mathrm{d} \bar{r}
\mathrm{d} s
\bigg] 
\Bigg|, \\
\end{aligned}
\end{equation}
where it is straightforward that $D(t, \cdot)e_{i} \in \mathbb{D}^{1,2}$ and 
$b_{\ell}
\big(
[
\mathbb{Y}^{n}(s)
]_{i}
-
[\mathbb{Y}^{n}(t_{n})]_{i}
\big)
\big(
[
f\big(\mathbb{Y}^{n}(t_{n}) \big) 
]_{\ell}
- 
[
f\big(\mathbb{Y}^{n}(s) \big)
]_{\ell}
\big) \in \operatorname{Dom} \delta$ due to Assumption \ref{assumption:globally-polynomial-growth-condition-paper3} with $\gamma=1$, Lemma \ref{lemma:Uniform-moment-bounds-of-the-PLMC-paper3}, Theorem \ref{theorem:regular-estimate-of-u-and-its-derivatives-paper3} and Lemma \ref{lemma:Malliavin regularity-paper3}.
Using Lemma \ref{lemma:calculus-rules-paper3} and the chain rule yields
\begin{equation}
\begin{aligned}
&\delta
\bigg(
b_{\ell}
\big(
[
\mathbb{Y}^{n}(s)
]_{i}
-
[
\mathbb{Y}^{n}(t_{n})
]_{i}
\big)
\Big(
\big[
f\big(\mathbb{Y}^{n}(t_{n}) \big) 
\big]_{\ell}
-  
\big[
f\big(\mathbb{Y}^{n}(s) \big)
\big]_{\ell}
\Big) 
\bigg) \\
&=
\big(
[
\mathbb{Y}^{n}(s)
]_{i}
-
[
\mathbb{Y}^{n}(t_{n})
]_{i}
\big)
\Big(
\big[
f\big(\mathbb{Y}^{n}(t_{n}) \big) 
\big]_{\ell}
-  
\big[
f\big(\mathbb{Y}^{n}(s) \big)
\big]_{\ell}
\Big)
\delta(b_{\ell})  \\
&\quad
-
\Big(
\big[
f\big(\mathbb{Y}^{n}(t_{n}) \big) 
\big]_{\ell}
-  
\big[
f\big(\mathbb{Y}^{n}(s) \big)
\big]_{\ell}
\Big) \
\Big(
\mathscr{D}^{b_{\ell}}
[
\mathbb{Y}^{n}(s)
]_{i}
-
\mathscr{D}^{b_{\ell}}
[
\mathbb{Y}^{n}(t_{n})
]_{i}
\Big)
\\
& \quad
-
\big(
[
\mathbb{Y}^{n}(s)
]_{i}
-
[
\mathbb{Y}^{n}(t_{n})
]_{i}
\big) \
\Big(
\mathscr{D}^{b_{\ell}}
\big[
f\big(\mathbb{Y}^{n}(t_{n}) \big) 
\big]_{\ell}
-  
\mathscr{D}^{b_{\ell}}
\big[
f\big(\mathbb{Y}^{n}(s) \big) 
\big]_{\ell}
\Big).
\end{aligned}
\end{equation}
Plugging this estimate into \eqref{equation:decomposition-J2-paper3,2,2-mc} shows,
\begin{equation}
\begin{aligned}
J_{2,2,2} 
&\leq
2
\sum_{n=N-N_{1}}^{N-2} 
\Bigg| 
\sum_{i, \ell =1}^{d}
\mathbb{E}
\bigg[
\int_{t_{n}}^{t_{n+1}}  
\int_{0}^{1}
Du\big(T-s, \upsilon_{2}^{n}(\bar{r}) 
\big)
e_{i}
 \\
&
\hspace{6em}
\times
\big(
[
\mathbb{Y}^{n}(s)
]_{i}
-
[
\mathbb{Y}^{n}(t_{n})
]_{i}
\big)
\Big(
\big[
f\big(\mathbb{Y}^{n}(t_{n}) \big) 
\big]_{\ell}
-  
\big[
f\big(\mathbb{Y}^{n}(s) \big)
\big]_{\ell}
\Big)
\delta(b_{\ell})
\mathrm{d} \bar{r}
\mathrm{d} s
\bigg] 
\Bigg| \\
&\quad +
2
\sum_{n=N-N_{1}}^{N-2} 
\Bigg| 
\sum_{i, \ell =1}^{d}
\mathbb{E}
\bigg[
\int_{t_{n}}^{t_{n+1}}  
\int_{0}^{1}
Du\big(T-s, \upsilon_{2}^{n}(\bar{r})
\big)
e_{i}
 \\
&
\hspace{6em}
\times
\Big(
\big[
f\big(\mathbb{Y}^{n}(t_{n}) \big) 
\big]_{\ell}
-  
\big[
f\big(\mathbb{Y}^{n}(s) \big)
\big]_{\ell}
\Big) \
\Big(
\mathscr{D}^{b_{\ell}}
[
\mathbb{Y}^{n}(s)
]_{i}
-
\mathscr{D}^{b_{\ell}}
[
\mathbb{Y}^{n}(t_{n})
]_{i}
\Big)
\mathrm{d} \bar{r}
\mathrm{d} s
\bigg] 
\Bigg| \\
&\quad +
2
\sum_{n=N-N_{1}}^{N-2} 
\Bigg| 
\sum_{i, \ell =1}^{d}
\mathbb{E}
\bigg[
\int_{t_{n}}^{t_{n+1}}  
\int_{0}^{1}
Du\big(T-s, \upsilon_{2}^{n}(\bar{r})
\big)
e_{i}
 \\
&
\hspace{6em}
\times
\big(
[
\mathbb{Y}^{n}(s)
]_{i}
-
[
\mathbb{Y}^{n}(t_{n})
]_{i}
\big) \
\Big(
\mathscr{D}^{b_{\ell}}
\big[
f\big(\mathbb{Y}^{n}(t_{n}) \big) 
\big]_{\ell}
-  
\mathscr{D}^{b_{\ell}}
\big[
f\big(\mathbb{Y}^{n}(s) \big) 
\big]_{\ell}
\Big)
\mathrm{d} \bar{r}
\mathrm{d} s
\bigg] 
\Bigg| \\
&=:
J^{(1)}_{2,2,2}
+
J^{(2)}_{2,2,2}
+
J^{(3)}_{2,2,2}.
\end{aligned}
\end{equation}
Taking the H\"older inequality, Theorem \ref{theorem:regular-estimate-of-u-and-its-derivatives-paper3}, Lemma \ref{lemma:holder-continuity-of-PLMC algorithm-paper3},
\eqref{equation:uniform-estimate-for-sum-paper3}
and Lemma \ref{lemma:Malliavin regularity-paper3} into account, we get
\begin{equation}
\label{equation:J2221-paper3}
\begin{aligned}
J^{(1)}_{2,2,2} 
&\leq
C
\sum_{n=N-N_{1}}^{N-2} 
\int_{t_{n}}^{t_{n+1}} 
\frac{1}{\sqrt{T-s}}
\left\|
\mathbb{Y}^{n}(s)-\mathbb{Y}^{n}(t_{n})
\right\|_{L^{4}(\Omega, \mathbb{R}^{d})} 
 \\
& \hspace{7em}
\times
\left\|
f\big(\mathbb{Y}^{n}(t_{n}) \big) 
-  
f\big(\mathbb{Y}^{n}(s) \big)
\right\|_{L^{4}(\Omega, \mathbb{R}^{d})}
\left\|
\delta(b_{\ell})
\right\|_{L^{2}(\Omega, \mathbb{R})} \dd s
\\
& \leq
C
\left(
d + 
\|x_{0}\|^{2}_{L^{4}(\Omega, \mathbb{R}^{d})}
\right) h.
\end{aligned}
\end{equation}
In the same manner, we obtain
\begin{equation}
\label{equation:J2222-paper3}
\begin{aligned}
J^{(2)}_{2,2,2} 
&\leq
C
\sum_{n=N-N_{1}}^{N-2} 
\int_{t_{n}}^{t_{n+1}} 
\frac{1}{\sqrt{T-s}}
\left\|
\mathscr{D}^{b_{\ell}} 
\mathbb{Y}^{n}(s) 
-
\mathscr{D}^{b_{\ell}} 
\mathbb{Y}^{n}(t_{n}) 
\right\|_{L^{2}(\Omega, \mathbb{R}^{d})}  
\left\|
f\big(\mathbb{Y}^{n}(t_{n}) \big) 
-  
f\big(\mathbb{Y}^{n}(s) \big)
\right\|_{L^{2}(\Omega, \mathbb{R}^{d})} \dd s
\\
& \leq
C
\left(
d + 
\|x_{0}\|^{2}_{L^{2}(\Omega, \mathbb{R}^{d})}
\right) h^{3/2},
\end{aligned}
\end{equation}
and
\begin{equation}
\label{equation:J2223-paper3}
\begin{aligned}
J^{(3)}_{2,2,2} 
&\leq
C
\sum_{n=N-N_{1}}^{N-2} 
\int_{t_{n}}^{t_{n+1}} 
\frac{1}{\sqrt{T-s}}
\left\|
\mathscr{D}^{b_{\ell}}
f
\big(
\mathbb{Y}^{n}(t_{n}) 
\big)
-
\mathscr{D}^{b_{\ell}}
f\big(
\mathbb{Y}^{n}(s) 
\big)
\right\|_{L^{2}(\Omega, \mathbb{R}^{d})}  
\left\|
\mathbb{Y}^{n}(s)
-
\mathbb{Y}^{n}(t_{n})
\right\|_{L^{2}(\Omega, \mathbb{R}^{d})} \dd s
\\
& \leq
C
\left(
d + 
\|x_{0}\|^{2}_{L^{2}(\Omega, \mathbb{R}^{d})}
\right) h.
\end{aligned}
\end{equation}
By 
combining \eqref{equation:J2221-paper3}, \eqref{equation:J2222-paper3}, \eqref{equation:J2223-paper3} with \eqref{equation:J1-lipschitz-paper3}, \eqref{equation:J2-remaining-paper3},
the proof is thus completed.
\end{proof}
Now, 
the proof of Theorem \ref{theorem:main-result-paper3-optimal} is straightforward due to \eqref{equation:error-decomposition-tv-paper3}.
}

\section{Numerical experiments} \label{section:Numerical-experiment}
In this section, some numerical results are performed to verify the theoretical analysis above.
We illustrate our finding via 
{\color{black}
the high-dimensional double-well model 
with the drift function being given
} 
by
\begin{equation} \label{equation:target-drift-paper3}
    f(x) = \alpha x - \beta \| x\|^{2} x, \quad \alpha, \beta >0, \ \forall x \in \mathbb{R}^{d}. 
\end{equation}
{\color{black}
For this particular drift,
Assumption \ref{assumption:globally-polynomial-growth-condition-paper3} is fulfilled with $\gamma=3$ and Assumptions \ref{assumption:contractivity-at-infinity-condition-paper3}, \ref{assumption:coercivity-condition-of-the-drift-paper3} can be also verified, as already done by Example \ref{eqn:doublewell}.
}

\textbf{\color{black}Numerical parameters of the convergence test.}
Let $\alpha=1$, $\beta = 4$, the initial condition 
{\color{black}
$X_{0}=
\bold{0} \in \mathbb{R}^{d}$.
}
We fix a terminal time $T=6$ and five different  stepsizes $h=2^{-5}, 2^{-6}, 2^{-7}, 2^{-8}, 2^{-9}$.
Here we focus on the convergence analysis with dimension $d\in \{6,10,50,100 \}$ with the  parameter of the PLMC algorithm \eqref{equation:numerical-scheme-PLMC-algorithm-paper3} being chosen as $\vartheta = 1$.
The empirical mean of $\mathbb{E}\left[\phi(X_{T}) \right]$
is estimated by a Monte Carlo approximation, involving 3,000 independent trajectories.

\textbf{Test functions.}
We here construct the  indicator function and step function as, for any $ x \in \mathbb{R}^{d}$.
\begin{equation} \label{equation:bounded-test-function-paper3}
\begin{aligned}
{\color{black}
\phi_{1}(x):=  \textbf{1}_{
\|x \| \in (0,1/2) \cup (3/2,2) 
\cup (5/2,3)
\cup (7/2,4)
}
}
\quad \
\text{and} \quad \
{\color{black}
\begin{split}
\phi_{2}(x):= \left\{
    \begin{array}{llll}
  \ \  0 \ \ \ \ \quad \ \| x \| \in [0, 1/2),
    \\
   \ \ 1 \ \ \ \ \quad \  \| x \|  \in [ 1/2, 1), \\
   \  1/2   \ \ \ \quad \| x \|  \in [ 1, 3/2), \\
  \   -1   \ \hspace{0.9em}  \quad \| x \|  \in [ 3/2, 2), \\
   \  1/4   \ \hspace{0.7em}  \quad \| x \|  \in [ 2, 5/2), \\
  \  1/3 \ \ \ \hspace{1.1em} \| x \|  \in [ 3, 7/2), \\
      -1/3   \ \ \ \hspace{0.6em}  \| x \|  \in [ 7/2, 4), \\
  -1/2   \ \ \ \hspace{0.6em}  \| x \|  \in [ 4, \infty). \\
 \end{array}\right.
 \end{split}
 }
 \end{aligned}
\end{equation}
Along this section, we consider the following four test functions
\begin{equation}
    \phi 
    \in 
    \{ \phi_{1}(x), \ e^{-\|x\|}, \ \phi_{2}(x), \ \arctan(\| x\|), x  \in \mathbb{R}^{d}\}.
\end{equation}
{\color{black}
Note that all test functions $\phi \in B_{b}(\mathbb{R}^{d})$ with $\| \phi\|_{0}=1$.
}


\textbf{Density test.}
Before proceeding further,
we would like to make sure that the choice of terminal time $T=6$ is suitable.
{\color{black}
To do so,
here we choose a convergent modified tamed Langevin Monte Carlo (MTLMC) algorithm proposed by \cite{neufeld2022non} as a reference,
\begin{equation} \label{equation:tamed-scheme-paper3}
 \overline{Y}_{n+1} 
= 
\overline{Y}_{n}
+ \dfrac{\alpha \overline{Y}_{n} 
- \beta \| \overline{Y}_{n}\|^{2} \overline{Y}_{n}}{ 
{\color{black}(1+h\|\overline{Y}_{n} \|^4)^{1/2}}
}h
+ \sqrt{2h}\xi_{n+1}, 
\quad \overline{Y}_{0} = X_{0},
\end{equation}
}
{\color{black}
%
The empirical distributions at $T=6$ for the PLMC algorithm $Y = (Y^{(1)}, \cdots, Y^{(10)})^T$ and the MTLMC algorithm $\overline{Y} = (\overline{Y}^{(1)}, \cdots, \overline{Y}^{(10)})^T$
for $d=10$ and a stepsize $h=2^{-13}$ are chosen as an example.
In Figure \ref{fig:pdf-paper3}, we plot the density curves of the first components $Y^{(1)}$ and $\overline{Y}^{(1)}$ of both algorithms.
}
As illustrated by Figure \ref{fig:pdf-paper3}, 
the normalised histogram plots of samples as well as the  marginal probability density curves generated by the PLMC algorithm \eqref{equation:numerical-scheme-PLMC-algorithm-paper3} and the MTLMC algorithm \eqref{equation:tamed-scheme-paper3} are very close, indicating that the choice of time $T=6$ is appropriate.
\begin{figure}[h]
\centering
\subfigure{
    \begin{minipage}[t]{0.45\textwidth}
    \centering
    \includegraphics[width=\textwidth]
      {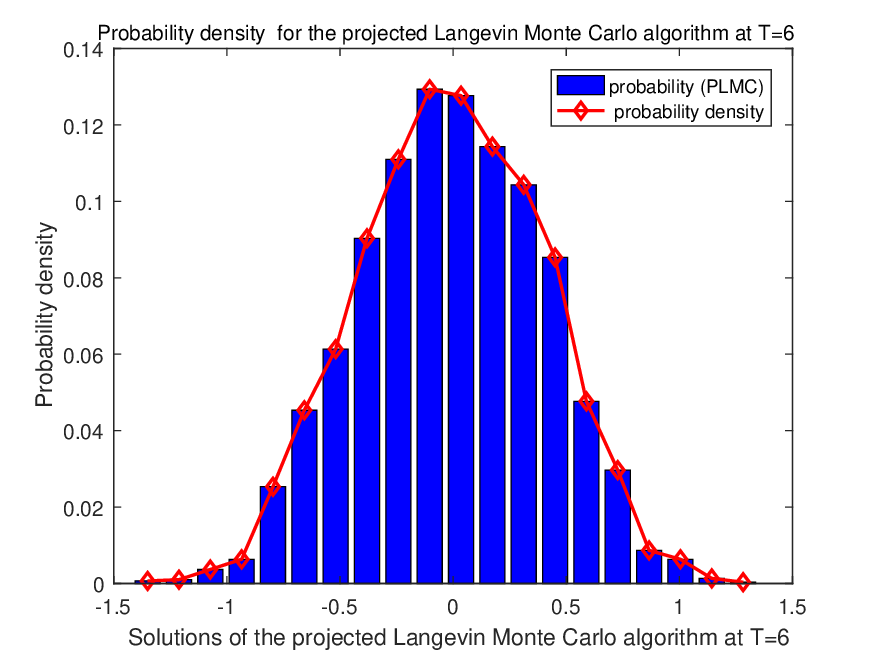}
    \end{minipage}
} 
\subfigure{
    \begin{minipage}[t]{0.45\textwidth}
    \centering
    \includegraphics[width=\textwidth]
      {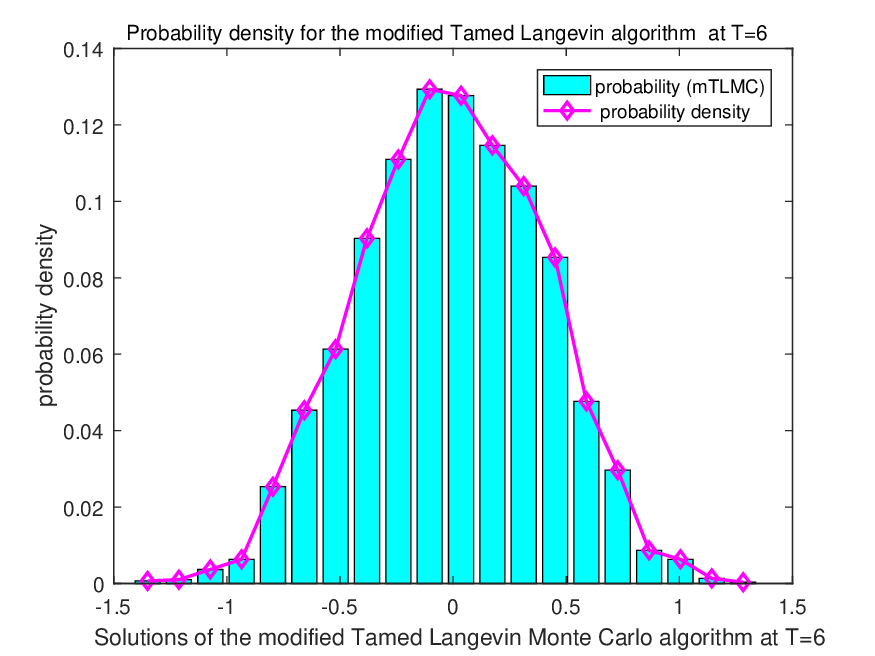}
    \end{minipage}
}
\caption{Probability density of the first component of  the double-well model.}
\label{fig:pdf-paper3}
\end{figure}

\textbf{Convergence test.}
For the convergence analysis, we run the PLMC algorithm \eqref{equation:numerical-scheme-PLMC-algorithm-paper3} for the Langevin SDEs with drift \eqref{equation:target-drift-paper3} by different stepsizes and dimensions $d\in \{6,10,50,100\}$ till $T=6$,
{
\color{black} i.e.
$T/h$ iterations for each step size $h$.
}
Further, 
the \textit{exact} solutions are identified as the  
{\color{black} numerical ones produced by the
PLMC algorithm \eqref{equation:numerical-scheme-PLMC-algorithm-paper3} 
}
using a fine stepsize $h_{ref}=2^{-13}$. 
{\color{black}
The reference lines of slope $0.5$ and $1$ are also presented,
depicted as two dashed lines, respectively.
}
From 
{\color{black}
Figures \ref{fig:errors-6d-10d-paper3}, \ref{fig:errors-50d-100d-paper3}, 
}
it is natural to obtain that the convergence rate under the total variation distance with $d \in \{6,10, 50,100 \}$ is of order $1$.
{\color{black}
In addition, from Table \ref{table:errors-6d-paper3} to Table \ref{table:errors-100d},
we present all errors with $d \in \{6,10, 50,100 \}$ between the target distribution and the law of the PLMC algorithm \eqref{equation:numerical-scheme-PLMC-algorithm-paper3} 
after $T/h$ iterations with $h \in \{ 2^{-5}, 2^{-6}, 2^{-7}, 2^{-8}, 2^{-9} \}$, $T=6$.
Approximate convergence rates for each test function are also provided.
}

{\color{black}
\begin{table}[htbp]
\centering
\setlength{\extrarowheight}{2.5pt}
\color{black}
\begin{tabular}{cccccc}
\hline
\multicolumn{6}{c}{Errors and convergence rates after $T/h$ iterations for $d=6$} \\ \hline
$h$\textbackslash $\phi$ & \multicolumn{1}{c}{$\phi_{1}(x)$} & $\exp(-\| x\|)$ & \multicolumn{1}{c}{$\phi_{2}(x)$} & $\arctan(\| x\|)$ & \multicolumn{1}{c}{Iterations}   \\ \hline
$2^{-5}$               & 3.13e-02           &   8.85e-04                & 3.83e-02        &   1.28e-03 &
192 \\ 
$2^{-6}$               & 1.33e-02           &   5.58e-04                & 1.68e-02        &   7.93e-04 
 & 384 \\ 
$2^{-7}$               & 6.00e-03           &   2.17e-04                & 8.00e-03        &   3.10e-04 
& 768 \\ 
$2^{-8}$               & 3.00e-03           &   1.36e-04                & 4.17e-03        &   1.92e-04  
& 1536\\ 
$2^{-9}$               & 1.33e-03           &   7.35e-05                & 1.83e-03        &   1.03e-04 
& 3072\\ \hline
\text{Order}               & 1.13           &   0.92               & 1.08        & \  0.93 
& \ \\ \hline
\end{tabular} 
\captionof{table}{
\color{black}Weak errors and convergence rates of the PLMC algorithm for the double well model when $d=6$.}
\label{table:errors-6d-paper3}
\end{table}
}

\begin{table}[htbp]
\centering
\setlength{\extrarowheight}{2.5pt}
\color{black}
\begin{tabular}{cccccc}
\hline
\multicolumn{6}{c}{Errors and convergence rates after $T/h$ iterations for $d=10$} \\ \hline
$h$\textbackslash $\phi$ & \multicolumn{1}{c}{$\phi_{1}(x)$} & $\exp(-\| x\|)$ & \multicolumn{1}{c}{$\phi_{2}(x)$} & $\arctan(\| x\|)$ & \multicolumn{1}{c}{Iterations}   \\ \hline
$2^{-5}$               & 6.17e-02           &   3.98e-03               & 8.58e-02        &   5.56e-03 &
192 \\ 
$2^{-6}$               & 2.70e-02           & 1.96e-03                & 3.68e-02        &  2.73e-03 
 & 384 \\ 
$2^{-7}$               & 1.53e-02           &  9.86e-04                & 2.15e-02        &   1.37e-03
& 768 \\ 
$2^{-8}$               & 6.33e-03           &   5.01e-04                & 9.00e-03        &   6.93e-04 
& 1536\\ 
$2^{-9}$               & 2.67e-03           &   2.36e-04                & 4.17e-03        &   3.27e-04 
& 3072\\ \hline
\text{Order}               & 1.12           & \ \ 1.01                & 1.08        & \  1.02 
& \ \\ \hline
\end{tabular} 
\captionof{table}{\color{black}
Weak errors and convergence rates of the PLMC algorithm for the double well model when $d=10$.}
\end{table}

\begin{figure}
    \centering
\subfigure{
    \begin{minipage}[t]{0.7\textwidth}
    \centering
    \includegraphics[width=\textwidth]
      {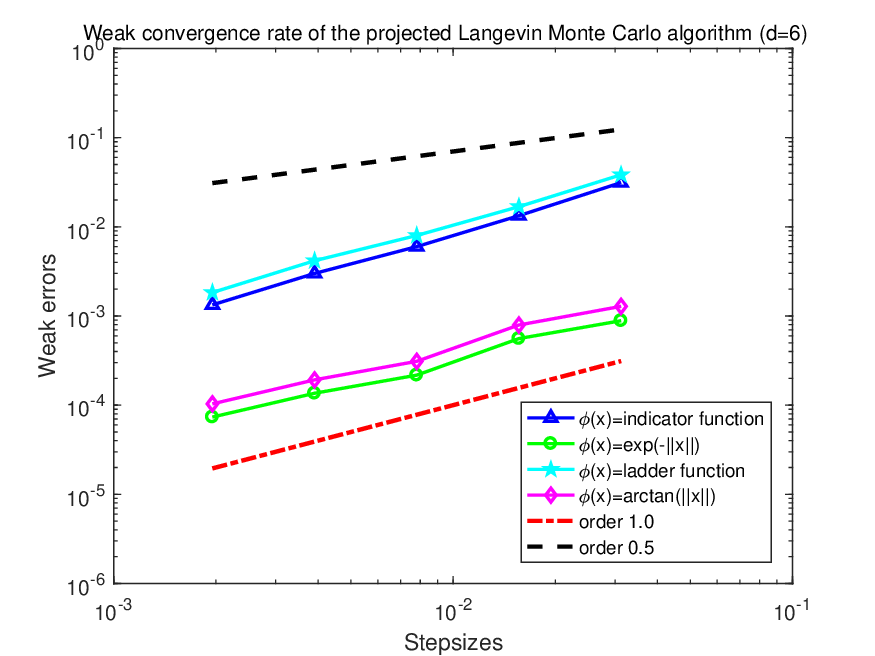}
    \end{minipage}
} 
\subfigure{
    \begin{minipage}[t]{0.7\textwidth}
    \centering
    \includegraphics[width=\textwidth]
      {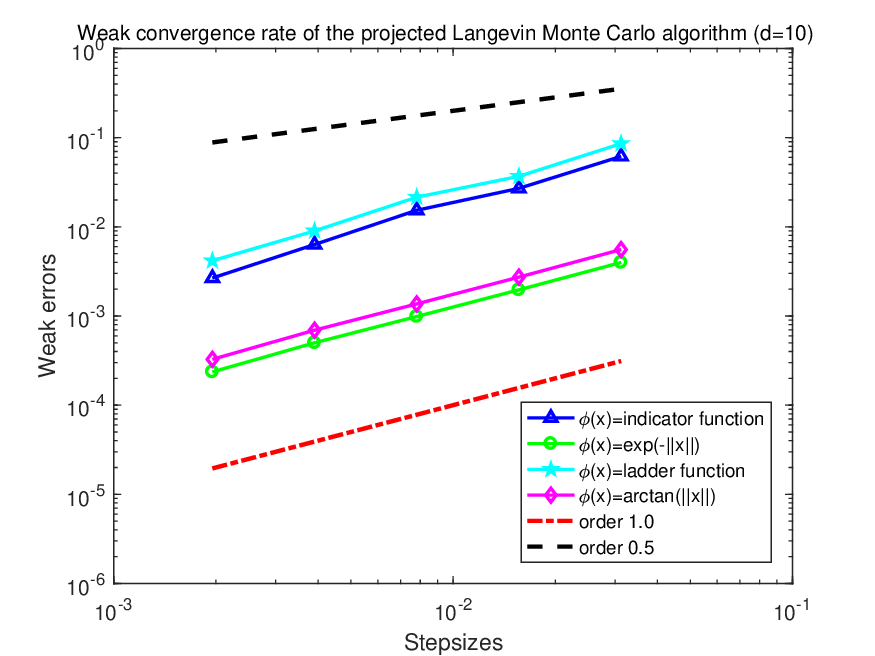}
    \end{minipage}
}
\caption{Weak convergence rates of PLMC algorithm of the double well model for $d=6$ (Top) and $d=10$ (Bottom).}
\label{fig:errors-6d-10d-paper3}
\end{figure}

\begin{table}[htbp]
\centering
\setlength{\extrarowheight}{2.5pt}
\color{black}
\begin{tabular}{cccccc}
\hline
\multicolumn{6}{c}{Errors and convergence rates after $T/h$ iterations for $d=50$} \\ \hline
$h$\textbackslash $\phi$ & \multicolumn{1}{c}{$\phi_{1}(x)$} & $\exp(-\| x\|)$ & \multicolumn{1}{c}{$\phi_{2}(x)$} & $\arctan(\| x\|)$ & \multicolumn{1}{c}{Iterations}   \\ \hline
$2^{-5}$               & 3.03e-01           &   1.66e-02                & 3.85e-01        &   2.48e-02 &
192 \\ 
$2^{-6}$               & 1.50e-01           &  7.04e-03                & 1.88e-01        &  1.04e-02 
 & 384 \\ 
$2^{-7}$               & 7.23e-02           &  3.33e-03                & 9.06e-02        &   4.91e-03
& 768 \\ 
$2^{-8}$               & 3.50e-02           &   1.61e-03                & 4.38e-02        &  2.37e-03 
& 1536\\ 
$2^{-9}$               & 1.97e-02           &   7.69e-04                & 2.47e-02        &   1.13e-03 
& 3072\\ \hline
\text{Order}               & 1.00          &   1.01                & 1.00        &   1.11
& \ \\ \hline
\end{tabular} 
\captionof{table}{\color{black}Weak errors and convergence rates of the PLMC algorithm for the double well model when $d=50$.}
\end{table}


\begin{table}[htbp]
\centering
\setlength{\extrarowheight}{2.5pt}
\color{black}
\begin{tabular}{cccccc}
\hline
\multicolumn{6}{c}{Errors and convergence rates after $T/h$ iterations for $d=100$} \\ \hline
$h$\textbackslash $\phi$ & \multicolumn{1}{c}{$\phi_{1}(x)$} & $\exp(-\| x\|)$ & \multicolumn{1}{c}{$\phi_{2}(x)$} & $\arctan(\| x\|)$ & \multicolumn{1}{c}{Iterations}   \\ \hline
$2^{-5}$               & 5.83e-01           &   2.65e-02                & 4.26e-01        &   4.36e-02 &
192 \\ 
$2^{-6}$               & 1.53e-01           &  9.37e-03                & 1.09e-02        &   1.51e-02 
 & 384 \\ 
$2^{-7}$               & 4.47e-02           &   4.29e-03                & 3.31e-02        &   6.83e-03
& 768 \\ 
$2^{-8}$               & 1.77e-02           &   2.07e-03                & 1.27e-02        &   3.29e-03 
& 1536\\ 
$2^{-9}$               & 8.33e-03           &   9.89e-04                & 6.33e-03        &  1.57e-03 
& 3072\\ \hline
\text{Order}               & 1.54           &  1.17               & 1.53        &  1.18
& \ \\ \hline
\end{tabular} 
\captionof{table}{\color{black}Weak errors and convergence rates of the PLMC algorithm for the double well model when $d=100$.}
\label{table:errors-100d}
\end{table}

\begin{figure}
    \centering
\subfigure{
    \begin{minipage}[t]{0.7\textwidth}
    \centering
    \includegraphics[width=\textwidth]
      {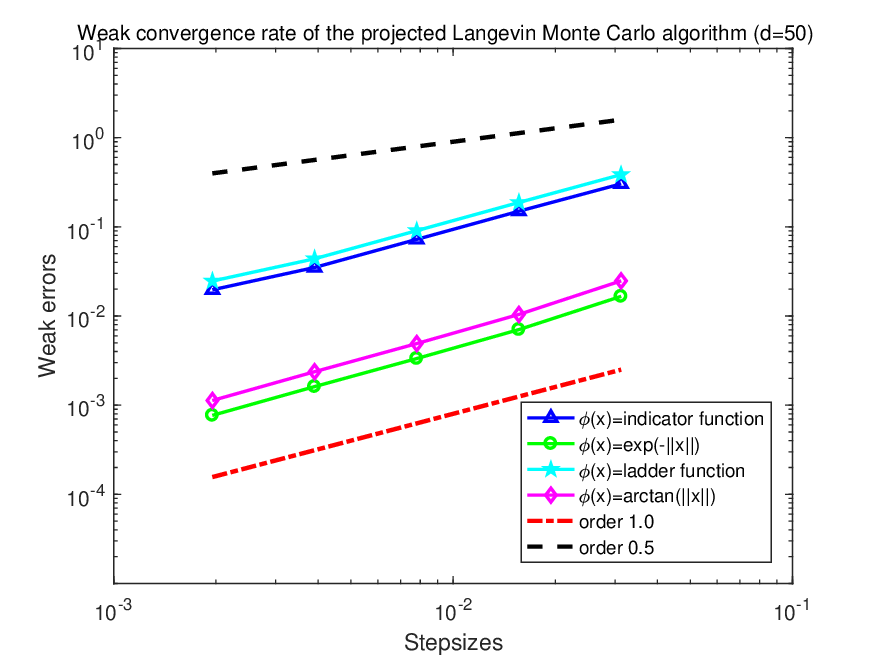}
    \end{minipage}
} 
\subfigure{
    \begin{minipage}[t]{0.7\textwidth}
    \centering
    \includegraphics[width=\textwidth]
      {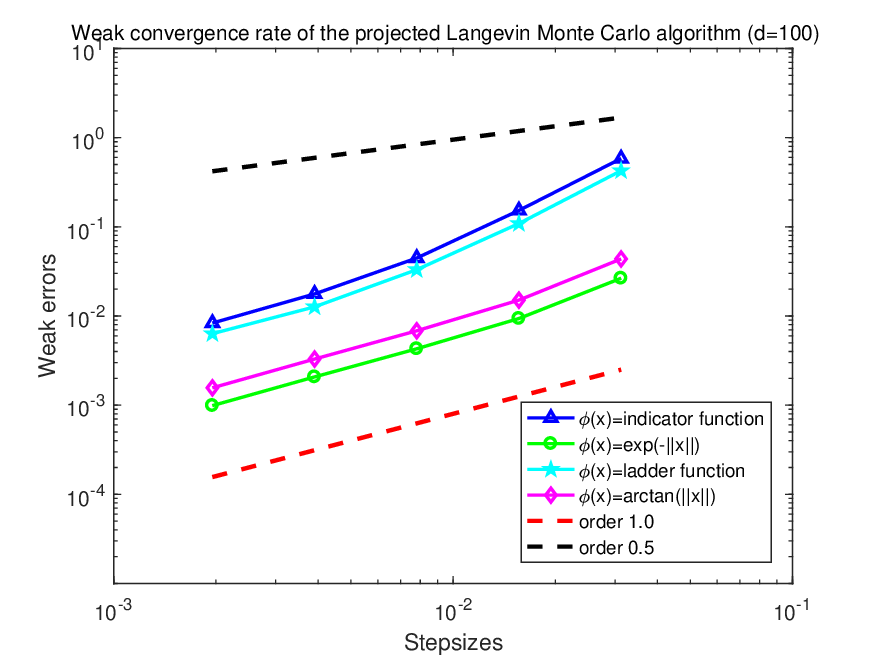}
    \end{minipage}
}
\caption{Weak convergence rates of PLMC algorithm of the double well model for $d=50$ (Top) and $d=100$ (Bottom).}
\label{fig:errors-50d-100d-paper3}
\end{figure}
{\color{black}
\textbf{Dimension Dependence.}
To study the dimension dependence of sampling errors,
we let $\alpha=1$, $\beta = 1$ in \eqref{equation:target-drift-paper3} with fixed  step size $h=2^{-4}$ 
and
run PLMC algorithm \eqref{equation:numerical-scheme-PLMC-algorithm-paper3} for 80 iterations.
All the errors and the approximate dependencies on dimensions with four test functions \eqref{equation:bounded-test-function-paper3} are shown in Table \ref{table:errors-d-dependence}.
Furthermore, we present these errors in Figure \ref{fig:d-dependence}, accompanied by  reference lines of $\mathcal{O}(d)$ and $\mathcal{O}(d^{3/2})$ for comparison.
In Figure \ref{fig:d-dependence} and Table \ref{table:errors-d-dependence},
one can observe a linear dependence of dimensions $\mathcal{O}(d)$ for the PLMC algorithm \eqref{equation:numerical-scheme-PLMC-algorithm-paper3},
which performs much better than the theoretical result $\mathcal{O}(d^{5})$ obtained in Theorem \ref{theorem:main-result-paper3} with $\gamma=3$.
It is indicated that the theoretical dimension dependence can be further improved even in the superlinear setting.
Nevertheless, this remains an open problem in theory and would be a direction for our future research.
}

\begin{table}[htbp]
\centering
\setlength{\extrarowheight}{2.5pt}
\color{black}
\begin{tabular}{ccccc}
\hline
\multicolumn{5}{c}{Errors and dimension dependence after 80 iterations } \\ \hline
$d$\textbackslash $\phi$ & $\phi_{1}(x)$ & $\exp(-\| x\|)$ & $\phi_{2}(x)$ & $\arctan(\| x\|)$  \\ \hline
$10$               & 4.20e-02           &   1.51e-03                & 3.53-02        &   1.86e-03  \\ 
$20$               & 1.20e-01           &  3.91e-03                & 8.01e-02        &   5.97e-03
 \\ 
$50$               & 2.48e-01           &   8.69e-03                & 1.44e-01        &   1.72e-02
 \\ 
$100$               & 4.79e-01           &   1.26e-02               & 3.99e-01        &   3.08e-02 
\\ \hline
\text{Order}               & 1.02           &  0.91               & 1.00       &  1.21
 \\ \hline
\end{tabular} 
\captionof{table}{\color{black}Weak errors and dimension dependence of the PLMC algorithm for the double well model.}
\label{table:errors-d-dependence}
\end{table}
\begin{figure}
    \centering
    \includegraphics[width=0.7\textwidth]
      {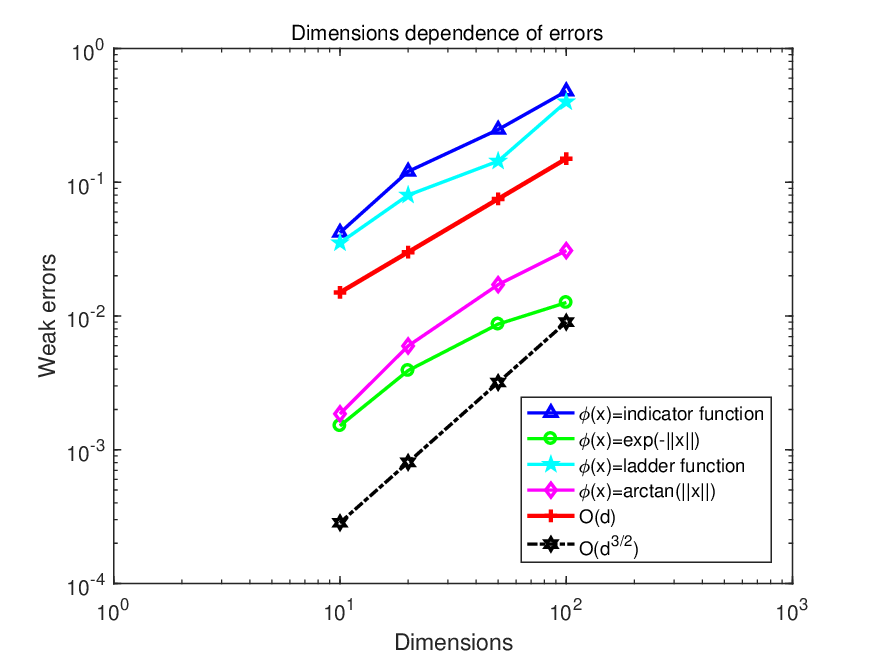}
\caption{Dimension dependence of weak errors of PLMC algorithm.}
\label{fig:d-dependence}
\end{figure}

\bibliographystyle{abbrv}
\bibliography{refer}

\appendix

\section{Proof of Lemmas in Section \ref{section:Preliminary-results-paper3}}
\subsection{Proof of Lemma \ref{lemma:Uniform-moment-bounds-of-the-Langevin-SDE-paper3}}
\label{proof-of-lemma:Uniform-moment-bounds-of-the-Langevin-SDE-paper3}
\begin{proof}[Proof of Lemma \ref{lemma:Uniform-moment-bounds-of-the-Langevin-SDE-paper3}]
{\color{black}
First we define a stopping time as 
\begin{equation} \label{equation:stopping-time}
\tau_{n} =
\inf \{ s \geq 0 : \| X_{s}  \| > n  \}.
\end{equation} 
}
{\color{black}
Applying the It\^o formula to 
$e^{cpt\land \tau_{n}}(\epsilon_{3}+\|X_{t\land \tau_{n}}\|^{2})^{p}$
}
for some constants 
{\color{black}
$c \in (0, 2a_{1})$,
}
$ \epsilon_{3}>0$ and using the Cauchy-Schwarz inequality show 
{\color{black}
\begin{equation} \label{equation: 1st step of the ito formula}
\begin{aligned}
 & e^{cpt\land \tau_{n}}
\left(
\epsilon_{3}
+\|X_{t\land \tau_{n}}\|^{2}
\right)^{p} \\
&\leq (
\epsilon_{3}
+\|x_{0}\|^{2}
)^{p} 
+ cp
\int_{0}^{t\land \tau_{n}} 
e^{cps} 
\left(
\epsilon_{3}+\|X_{s}\|^{2}
\right)^{p} 
\dd s 
+ 2p
\int_{0}^{t\land \tau_{n}} 
e^{cps} 
\left(
\epsilon_{3}
+\|X_{s}\|^{2}
\right)^{p-1} 
\left\langle 
X_{s}, f(X_{s}) 
\right\rangle \dd s\\
& \quad 
+ 2\sqrt{2} p
\int_{0}^{t\land \tau_{n}} 
e^{cps} 
\left(
\epsilon_{3}
+\|X_{s}\|^{2}
\right)^{p-1} 
\left\langle
X_{s},
\dd W_{s}  
\right\rangle
+ 2p(2p-1)d
\int_{0}^{t\land \tau_{n}} 
e^{cps} 
\left(
\epsilon_{3}
+\|X_{s}\|^{2}
\right)^{p-1} \dd s.
\end{aligned}
\end{equation}
}
{\color{black}
Taking expectations on the both sides of \eqref{equation: 1st step of the ito formula}, using Assumption \ref{assumption:coercivity-condition-of-the-drift-paper3} and letting $\epsilon_{3}\rightarrow 0^{+}$ yield
}
{\color{black}
\begin{equation} \label{eq:moment-estimate-in-sde}
\begin{aligned}
&
\mathbb{E}
\left[
e^{cp 
{
\color{black}(t\land \tau_{n})
}
}
\|
{\color{black}
X_{t\land \tau_{n}}
}
\|^{2p}
\right] \\
& \leq 
\mathbb{E}\left[\|x_{0}\|^{2p}\right] 
+ p \mathbb{E}\left[ 
\int_{0}^{\color{black}t\land \tau_{n}}
e^{cps}  
\Big[
(- 2a_{1} + c) \|X_{s}\|^{2p} 
+ (4pd-2d+2a_{2})\|X_{s}\|^{2p-2} 
\Big] \dd s 
\right],
\end{aligned}
\end{equation}
}
{
\color{black}
where the property of the It\^o integral was used that 
\begin{equation}
2\sqrt{2} p
\mathbb{E}
\left[
\int_{0}^{t\land \tau_{n}} 
e^{cps} 
\|X_{s}\|^{2(p-1)}
\left\langle
X_{s},
\dd W_{s}  
\right\rangle
\right] = 0.
\end{equation}
}
Using the Young inequality 
{\color{black}
\begin{equation}
ab^{2p-2} 
\leq 
\hat{\epsilon}b^{2p} 
+ \tfrac{1}{p}
\left(
\tfrac{\hat{\epsilon} p}{p-1}
\right)^{1-p}
a^{p},
\end{equation} 
}
for any $a,b >0$, 
{\color{black}
$\hat{\epsilon}>0$,
}
and $p \geq 1$ 
{\color{black}
yields}
{\color{black}
\begin{equation} \label{eq:young-inequality-in-moment-estimate-for-sde}
(
4pd-2d+2a_{2}
)
\|
X_{s}
\|^{2p-2} 
\leq 
\hat{\epsilon}
\|
X_{s}
\|^{2p}
+
\tfrac{1}{p}
\left(
\tfrac{\hat{\epsilon} p}{p-1}
\right)^{1-p}
(
4pd-2d+2a_{2}
)^{p}.
\end{equation}
}
{\color{black}
Choosing $c\in (0, 2a_{1})$ and 
$\hat{\epsilon} = 2a_{1}-c$, 
by \eqref{eq:young-inequality-in-moment-estimate-for-sde}
we obtain that the following estimate holds true 
\begin{equation}
\left(
-2a_{1}+c
\right)
\|X_{s}\|^{2p} 
+\big(
4pd-2d+2a_{2} 
\big)\|X_{s}\|^{2p-2}
\leq 
\tfrac{(
4p-2+2a_{2}
)^{p}}{p}
\left(
\tfrac{ p-1}{(2a_{1} - c)p}
\right)^{p-1}
d^{p}.
\end{equation}
As a result, we deduce that
\begin{equation}
\mathbb{E}
\left[
e^{cp \color{black} (t\land \tau_{n})}
\|
{ \color{black}
X_{t\land \tau_{n}}
}
\|^{2p}
\right] 
\leq 
\mathbb{E}
\left[
\|x_{0}\|^{2p}
\right]
+ 
\tfrac{(
4p-2+2a_{2}
)^{p}}{p}
\left(
\tfrac{ p-1}{(2a_{1} - c)p}
\right)^{p-1}
d^{p}
{\color{black}
\int_{0}^{t\land \tau_{n}} e^{cps} \dd s.
}
\end{equation}
{\color{black}
Due to the Fatou Lemma, letting $n \rightarrow \infty$ gives
}
\begin{equation}
\mathbb{E}\left[\|X_{t}\|^{2p}\right] 
\leq e^{-cpt}
\mathbb{E}\left[\|x_{0}\|^{2p}\right] 
+ 
\tfrac{(
4p-2+2a_{2}
)^{p}}{p}
\left(
\tfrac{ p-1}{(2a_{1} - c)p}
\right)^{p-1}
d^{p}.
\end{equation}
}
The proof is completed. 
\end{proof}

\section{Proof of lemmas in Section \ref{section:Kolmogorov-equation-and-regularization-estimates}}
\subsection{Proof of Lemma \ref{lemma:differentiability-of-solutions-paper3}}
\label{proof:lemma:differentiability-of-solutions-paper3}
\begin{proof}[Proof of Lemma \ref{lemma:differentiability-of-solutions-paper3}]
The existence and the uniqueness of the mean-square derivatives up to the second order can be derived owing to Remark \ref{remark: one-side Lipschitz condition of the drfit-paper3} 
{\color{black}
(see  \cite[Proposition 1.3.5]{cerrai2001second} or \cite[Appendix C]{zhao2024one}).
}
For simplicity, we denote
\begin{equation}\label{equation:notation-of-the-derivatives-paper3}
\eta^{v_{1}} (t,x):=\mathcal{D}X^{x}_{t} v_{1}, 
\quad 
\xi^{v_{1}, v_{2}}(t,x) :=  \mathcal{D}^{2}X^{x}_{t}  (v_{1}, v_{2}), 
\quad \forall x, v_{1}, v_{2}\in \mathbb{R}^{d}.
\end{equation}
\textbf{Part I: estimate of the first variation process}

It follows from Remark \ref{remark: one-side Lipschitz condition of the drfit-paper3} that, 
\begin{equation} \label{eq: enhanced lipschitz condition}
    \langle Df(x)y, y\rangle  \leq L \| y\|^{2}, \quad \forall x, y \in \mathbb{R}^{d}.
\end{equation}
Moreover, for the first variation process of SDE \eqref{eq:langevin-SODE}, one gets
\begin{equation} \label{eq:1st-variation-of-SODE}
\dd \eta^{v_{1}} (t,x)  
= Df( X^{x}_t)\eta^{v_{1}} (t,x) \, \dd t, 
\quad \eta^{v_{1}} (0) = v_{1}.
\end{equation}
Taking the temporal derivative of $\|\eta^{v_{1}}(t,x) \|^{2}$, we obtain that
\begin{equation}
\begin{aligned}
\dd \|\eta^{v_{1}}(t,x) \|^{2}
&= 2\left\langle 
\eta^{v_{1}}(t,x), Df(X^{x}_{t})\eta^{v_{1}}(t,x)
\right\rangle 
\leq 2L\|\eta^{v_{1}}(t,x) \|^{2},
\end{aligned}
\end{equation}
which leads to
\begin{equation} \label{equation:estimate-of-the-1st-variation-process-paper3}
\begin{aligned}
\|\eta^{v_{1}}(t,x) \|^{2} 
\leq e^{2Lt} \|v_{1}\|^{2}.
\end{aligned}
\end{equation}
\textbf{Part II: estimate of the second variation process}

Similarly, due to the variation approach, we have the second variation process with respect to SDEs \eqref{eq:langevin-SODE} as follows,
\begin{equation} \label{eq: 2nd variation of SODE}
\begin{aligned}
\dd \xi^{v_{1}, v_{2}} (t,x)  
= 
\Big(
Df( X^{x}_t)\xi^{
v_{1}, v_{2}
} (t,x) 
+ D^{2}f( X^{x}_t)\big(
\eta^{v_{1}} (t,x), \eta^{v_{2}} (t,x) 
\big)  \Big) \dd t, 
\quad \xi^{v_{1}, v_{2}} (0) = 0.
\end{aligned}
\end{equation}
Following the same argument as before and the Young inequality, we deduce that
\begin{equation} \label{equation:estimate-of-the-squared-second-variation-process-paper3}
\begin{aligned}
&\dd 
\|
\xi^{v_{1}, v_{2}} (t,x)  
\|^{2} \\
&= \Big(
2\left\langle
\xi^{
v_{1}, v_{2}
} (t,x) , 
Df( X^{x}_t) 
\xi^{
v_{1}, v_{2}
} (t,x) 
\right\rangle 
+  
2\left\langle 
\xi^{
v_{1}, 
v_{2}
} (t,x) ,  
D^{2}f( X^{x}_t) 
\big(
\eta^{v_{1}}(t,x), \eta^{v_{2}}(t,x)
\big) 
\right\rangle 
\Big) \dd t\\
& \leq 
(2L+1) 
\|
\xi^{
v_{1}, v_{2}
} (t,x)  
\|^{2} \dd t 
+ 
\left\|
D^{2}f( X^{x}_t) 
\big(
\eta^{v_{1}}(t,x), 
\eta^{v_{2}}(t,x)
\big)
\right\|^{2} 
\dd t.
\end{aligned}
\end{equation}
The key issue is to estimate $\left\|  D^{2}f( X^{x}_t) (\eta^{v_{1}}(t,x), \eta^{v_{2}}(t,x))\right\|_{L^{2}(\Omega, \mathbb{R}^{d})}$, where, by Assumption \ref{assumption:globally-polynomial-growth-condition-paper3}, the range of $\gamma$ deserves to be discussed carefully. 
{\color{black}
In the case $1\leq \gamma \leq 2$,  we employ Assumption \ref{assumption:globally-polynomial-growth-condition-paper3} and
\eqref{equation:estimate-of-the-1st-variation-process-paper3} to arrive at
}
%
%
{\color{black}
\begin{equation} 
\label{equation:D2f-gamma<2-paper3}
\begin{aligned}
\left\| 
D^{2}f( X^{x}_t) 
\big(
\eta^{v_{1}}(t,x)
, 
\eta^{v_{2}}(t,x)
\big)
\right\|_{L^{2}(\Omega, \mathbb{R}^{d})}
&\leq 
C 
\left\|
\eta^{v_{1}}(t,x) 
\right\| 
\cdot 
\left\|
\eta^{v_{2}}(t,x) 
\right\|   \\
&\leq Ce^{2Lt} 
\left\| 
v_{1}
\right\|
\cdot
\left\| 
v_{2} 
\right\|, \quad
\forall
x, v_{1}, v_{2} 
\in 
\mathbb{R}^{d}.
\end{aligned}
\end{equation}
}
{\color{black}
In the case $\gamma>2$, using 
Assumption \ref{assumption:globally-polynomial-growth-condition-paper3}, Lemma \ref{lemma:Uniform-moment-bounds-of-the-Langevin-SDE-paper3}
and  \eqref{equation:estimate-of-the-1st-variation-process-paper3} yields, 
\begin{equation}
\label{equation:D2f-gamma>2-paper3}
\begin{aligned}
&\left\| 
D^{2}f( X^{x}_t) 
\big(
\eta^{v_{1}}(t,x), 
\eta^{v_{2}}(t,x)
\big)
\right\|_{L^{2}(\Omega, \mathbb{R}^{d})} \\
&\leq C \big\|
\left(
1+\| X^{x}_{t} \| 
\right)^{\gamma-2} 
\big\|_{L^{2}(\Omega, \mathbb{R})}
\left\|
\eta^{v_{1}}(t,x) 
\right\| 
\cdot 
\left\|
\eta^{v_{2}}(t,x) 
\right\|   \\
& \leq C 
e^{2Lt}
\left(
d^{\gamma/2 -1 } + \|x\|^{\gamma-2}
\right)
\left\|
v_1 
\right\|
\cdot
\left\|
v_2
\right\|, 
\quad
\forall x, v_{1}, v_{2} \in \mathbb{R}^{d}.
\end{aligned}
\end{equation}
}
%
%
{\color{black}
Combining \eqref{equation:D2f-gamma<2-paper3} with \eqref{equation:D2f-gamma>2-paper3} above, we obtain,
\begin{equation}
\begin{aligned}
\mathbb{E}
\left[
\|
\xi^{v_{1}, 
v_{2}} (t,x)  
\|^{2} 
\right]
&\leq (2L+1)
\int_{0}^{t} 
\mathbb{E}
\left[
\|
\xi^{v_{1}, 
v_{2}} (s,x)  
\|^{2} 
\right] \dd s \\
&\quad
+
C
e^{4Lt}
\left[
\textbf{1}_{\gamma \in [1,2]}
+
\textbf{1}_{\gamma \in (2, \infty)} 
(d^{\gamma-2}+\|x\|^{2\gamma-4})
\right]
\left\|
v_1 
\right\|^{2}
\cdot
\left\|
v_2
\right\|^{2}, 
\quad
\forall x, v_{1}, v_{2} \in \mathbb{R}^{d},
\end{aligned}
\end{equation}
leading to
\begin{equation}
\begin{aligned}
\|
\xi^{
v_{1}, v_{2}
} (t,x) 
\|_{L^{2} (\Omega, \mathbb{R}^{d})}
\leq 
C e^{(3L+1/2)t}
\left[
\textbf{1}_{\gamma \in [1,2]}
+
\textbf{1}_{\gamma \in (2, \infty)} (d^{\gamma/2 -1}+\|x\|^{\gamma-2})
\right]
\left\|
v_1 
\right\|
\cdot
\left\|
v_2
\right\|, 
\
\forall x, v_{1}, v_{2} \in \mathbb{R}^{d},
\end{aligned}
\end{equation}
with the help of the Gronwall inequality.
The proof is thus completed.
}
%
\end{proof}

\section{Proof of Lemmas in Section \ref{section:Time-independent-error-analysis}}
\subsection{Proof of Lemma \ref{lemma:error-estimate-between-x-and-projected-x-paper3}}
\label{Proof of Lemma projected error}
\begin{proof}[Proof of Lemma \ref{lemma:error-estimate-between-x-and-projected-x-paper3}]
{\color{black}
For the case $\gamma > 1$, owing to \eqref{equation:projection-operator-paper3}
we have
\begin{equation}
\|
x-\mathscr{P}(x)
\|
=\|x-\mathscr{P}(x)\| 
\textbf{1}_{
\|x\| \geq \vartheta d^{1/2 \gamma} h^{-1/2 \gamma}
}.
\end{equation}
On the one hand,
one directly derives from Lemma \ref{lemma: useful estimate of PLMC algorithm-paper3} that
\begin{equation}\label{equation:estimate-1-paper3}
\|x-\mathscr{P}(x)\|
\leq
\|x \| + \|\mathscr{P}(x) \|
\leq 
2\|x\|.
\end{equation}
On the other hand,
it is straightforward to show that
\begin{equation}\label{equation:estimate-2-paper3}
\textbf{1}_{
\|x\| \geq \vartheta d^{1/2 \gamma} h^{-1/2 \gamma}
}
\leq
\left(
\dfrac{\|x\|}
{\vartheta d^{1/2\gamma}
h^{-1/2\gamma}
}
\right)^{4 \gamma}=
\vartheta^{-4 \gamma}
 d^{-2}
 h^{2} 
\|x\|^{4 \gamma}.
\end{equation}
Combining \eqref{equation:estimate-1-paper3} with \eqref{equation:estimate-2-paper3} yields,
for $\gamma>1$,
\begin{equation}
\|x-\mathscr{P}(x)\|
\leq
2\vartheta^{-4 \gamma}
 d^{-2}
 h^{2} 
\|x\|^{4 \gamma+1}, \quad
\forall x \in \mathbb{R}^{d}.
\end{equation}
To close the proof, we mention that it is evident to obtain $x - \mathscr{P}(x) = 0$ for $\gamma = 1$ due to \eqref{equation:projection-operator-paper3}.
}
\end{proof}

\section{Proof of {\color{black} Proposition \ref{theorem-mixing-time}}}
\label{proof-of-mixing-time}

\begin{proof}[\color{black}Proof of Proposition \ref{theorem-mixing-time}.]
Given tolerance $\epsilon \in (0,1)$,
{\color{black}
it follows from Theorem \ref{theorem:main-result-paper3} and Theorem \ref{theorem:main-result-paper3-optimal} that
choosing sufficiently large $k$ and sufficiently small $h$ such that
\begin{equation}\label{equation:error-decomposition-in-tv}
C_{\star}
\|\phi\|_{0}
e^{-c_{\star} kh}
\left(
1+
\mathbb{E}
\left[
\|x_{0}\|
\right]
\right) 
\leq
\dfrac{\epsilon}{2} \quad
\text{and}
\quad
Cd^{\max\{3\gamma/2, 2\gamma-1 \}}
h 
\big(
\textbf{1}_{\gamma=1} +
\textbf{1}_{\gamma>1}| \ln h |
\big)
\leq
\dfrac{\epsilon}{2},
\end{equation}
one then has $\|\Pi(Y^{x_{0}}_{k}) - \pi\|_{\text{TV}} \leq \epsilon$.
}
{\color{black}
Rearranging the first inequality of \eqref{equation:error-decomposition-in-tv} yields
\begin{equation}
e^{-c_{\star} kh}
\leq
\dfrac{\epsilon}{
2C_{\star}\|\phi\|_{0}
\left(
1+
\mathbb{E}
\left[
\|x_{0}\|
\right]
\right) 
},
\end{equation}
resulting in 
\begin{equation} \label{equation:constraint-on-k}
k \geq 
\dfrac{1}{c_{\star}h}  
\ln 
\left(
\dfrac{2C_{\star}\|\phi\|_{0}
\left(
1+
\mathbb{E}
\left[
\|x_{0}\|
\right]
\right) }{\epsilon}
\right).
\end{equation}
}
{\color{black}
Likewise, rearranging the second inequality of \eqref{equation:error-decomposition-in-tv} gives
\begin{equation} \label{equation:second-part}
\begin{aligned}
\dfrac{\tfrac{1}{h}}{\textbf{1}_{\gamma=1} +
\textbf{1}_{\gamma>1}\ln (\tfrac{1}{h})}
\geq 
\dfrac{  2C 
d^{
\max\{3\gamma/2, 2\gamma-1 \}
}
}{\epsilon}.
\end{aligned}
\end{equation}
}
{\color{black}
If $\gamma=1$, one directly derives 
$1/h 
\geq 
  2C 
d^{
3/2
}
/\epsilon$,
leading to $k = \mathcal{O}( \frac{d^{
3/2
}}
{\epsilon} \cdot \ln (\frac{1}{\epsilon}))$.
For $\gamma>1,$
note that $x / \ln x \geq \widehat{C}$ for $x \geq 1$, $\widehat{C}>0$ holds if $ x \geq 2\widehat{C}  \ln \widehat{C}$.
Therefore, \eqref{equation:second-part} can be satisfied on the condition
\begin{equation} \label{equation:restriction-on-h}
\dfrac{1}{h} 
\geq 
4
C 
\dfrac{d^{
\max\{3\gamma/2, 2\gamma-1 \}
}}{\epsilon}
\cdot  
\ln 
\left(2C\dfrac{d^{
\max\{3\gamma/2, 2\gamma-1 \}
}}{\epsilon}
\right).
\end{equation}
Plugging \eqref{equation:restriction-on-h} into \eqref{equation:constraint-on-k} yields
\begin{equation}
\begin{aligned}
    k 
    &\geq 
    4
    C \dfrac{d^{
    \max\{3\gamma/2, 2\gamma-1 \}
    }}{c_{\star}\epsilon} 
    \cdot 
    \ln \big(2C\dfrac{d^{\max\{3\gamma/2, 2\gamma-1 \}}}{\epsilon}\big)
    \cdot
    \ln 
\left(
\dfrac{2C_{\star}\|\phi\|_{0}
\left(
1+
\mathbb{E}
\left[
\|x_{0}\|
\right]
\right) }{\epsilon}
\right) \\
&= \mathcal{O}
\left(
\dfrac{d^{\max\{3\gamma/2, 2\gamma-1 \}}}{\epsilon} 
\cdot 
\ln \big(\dfrac{d}{\epsilon}\big) 
\cdot
\ln \big(\dfrac{1}{\epsilon}\big)
\right),
\end{aligned}
\end{equation}
which completes the proof.
}

\end{proof}

{\color{black}
\section{Proof  of auxiliary results in  Example \ref{eqn:doublewell}}
\label{proof-of-example}
\begin{proof}
It is straightforward to show that the function $f(x) = x(1 - \| x\|^{2})$ satisfies Assumptions \ref{assumption:globally-polynomial-growth-condition-paper3} with $\gamma=3$ and
\begin{equation}\label{equation:growth-of-f-paper3-of-double-well}
\left\|
f(x)  - f(y) 
\right\| 
\leq 
(
1
+
\|x\|
+
\|
y
\|
)^{2}  
\|
x-y
\|,
\quad 
\forall x, y\in \mathbb{R}^{d}.
\end{equation}
{\color{black}
Furthermore, one has
\begin{equation}
\left\langle
x, f(x)
\right\rangle
=\|x \|^{2} - \|x \|^{4} 
= -\|x \|^{2} + 2\|x \|^{2} - \|x \|^{4} 
 \leq -\|x \|^{2} + 1,
\end{equation}
which ensures that Assumption \ref{assumption:coercivity-condition-of-the-drift-paper3} is satisfied with $a_{1}=1$, $a_{2}=1$.
}
In the following,
we aim to illustrate how the drift $f$  satisfies Assumption \ref{assumption:contractivity-at-infinity-condition-paper3}.

Before proceeding further, we denote the notation $B(0, R)$ as the Euclidean ball centered at $0$ with radius $R>0$. As shown by \cite[Remark 2.3]{neufeld2022non}, 
the drift $f$ has the following \textit{contractivity outside a ball} property as
\begin{equation} \label{equation:convex-outside-ball}
\left\langle 
x-y,f(x)-f(y) 
\right\rangle 
\leq 
-\|x-y \|^{2}, 
\quad 
\forall x, y \notin B(0, \sqrt{2}).
\end{equation}
One may assume $x \in B(0, \sqrt{2})$ and $y \notin B(0, \sqrt{2})$ such that $\|x-y \| \geq \mathcal{R}$ 
without loss of generality. Setting $z = \lambda x + (1-\lambda)y$, then there exists some $\lambda \in (0, 1)$ such that $\| z\| \ = \sqrt{2}$.
It is also straightforward to derive that
\begin{equation}\label{equation:some-estiamtes-of-z-paper3}
z-y = \lambda(x-y), \quad \text{and} \quad
x - z = (1-\lambda)(x-y).
\end{equation}
Moreover, using the Cauchy-Schwarz inequality, \eqref{equation:growth-of-f-paper3-of-double-well}, 
\eqref{equation:convex-outside-ball} and \eqref{equation:some-estiamtes-of-z-paper3} yields,
\begin{equation}\label{equation:prior-estimates-of-inner-paper3}
\begin{aligned}
\left\langle 
x-y,f(x)-f(y) 
\right\rangle 
&=
\left\langle 
x-y,f(x)-f(z) 
\right\rangle 
+
\left\langle 
x-y,f(z)-f(y) 
\right\rangle  \\
& = 
\left\langle 
x-y,f(x)-f(z) 
\right\rangle 
+
\dfrac{1}{\lambda}
\left\langle 
z-y,f(z)-f(y) 
\right\rangle \\
& \leq
\left\langle 
x-y,f(x)-f(z) 
\right\rangle 
-
\lambda
\|x - y \|^2 \\
& \leq 
(
1
+
\|x\|
+
\|
z
\|
)^{2}  
\|
x-y
\| \cdot
\|x-z \|
-
\lambda
\|x - y \|^2 \\
& \leq
-
\big[
\lambda 
- 
(
2\sqrt{2}+1
)^{2}(1-\lambda)
\big]
\|x - y \|^2 \\
& = 
-
\big[
1
- 
(
4\sqrt{2}+10
)(1-\lambda)
\big]
\|x - y \|^2,
\end{aligned}
\end{equation}
where $\|x\|, \|z \| \leq \sqrt{2}$.
For the case that $\|x-y \| \geq \mathcal{R}$, according to 
\eqref{equation:some-estiamtes-of-z-paper3} and the fact that $\|x\|, \|z \| \leq \sqrt{2}$, we obtain
\begin{equation}
1-\lambda = \dfrac{\|x-z\|}{\|x-y\|} \leq \dfrac{\sqrt{2}}{\mathcal{R}}.
\end{equation}
Choosing $\mathcal{R} = 16+20\sqrt{2}$ and
plugging the estimate above into \eqref{equation:prior-estimates-of-inner-paper3} shows,
\begin{equation} \label{equation:estimate-bigger-than-R-paper3}
\left\langle 
x-y,f(x)-f(y) 
\right\rangle 
\leq
-\dfrac{\|x-y\|^{2}}{2}, \quad \
\text{for} \ \quad
\|x-y\| > 16+20\sqrt{2}.
\end{equation}
If $\|x-y\| \leq \mathcal{R} = 16+20\sqrt{2}$,
one directly obtains the following estimates by \eqref{equation:prior-estimates-of-inner-paper3}
and the fact that $1-\lambda \leq 1$
as
\begin{equation}\label{equation:estimate-smaller-than-R-paper3}
\begin{aligned}
\left\langle 
x-y,f(x)-f(y) 
\right\rangle 
\leq
(4\sqrt{2}+9)
\|x-y\|^2.
\end{aligned}
\end{equation}
Therefore, we derive from \eqref{equation:estimate-bigger-than-R-paper3} and \eqref{equation:estimate-smaller-than-R-paper3} that $\widetilde{a}_{1} = 4\sqrt{2} + 19/2$, $\widetilde{a}_{2} = 1/2$ and $\mathcal{R}=16+20\sqrt{2}$.
\end{proof}
}

\end{document}